\DeclareFontShape{T1}{lmr}{bx}{sc} { <-> ssub * cmr/bx/sc }{}
\pgfplotsset{compat=newest}
\numberwithin{equation}{section}
\setlist[enumerate]{label=(\roman*)}
\theoremstyle{plain}
\newtheorem{theorem}{Theorem}[section]
\newtheorem{proposition}[theorem]{Proposition}
\newtheorem{lemma}[theorem]{Lemma}
\newtheorem{corollary}[theorem]{Corollary}
\newtheorem{remark}[theorem]{Remark}
\newtheorem{definition}[theorem]{Definition}
\DeclareMathOperator*{\argmin}{arg\,min} 
\DeclareMathOperator*{\esssup}{ess\,sup} 
\newcommand{\doublehookrightarrow}{%
	\mathrel{\mathrlap{{\mspace{4mu}\lhook}}{\hookrightarrow}}
}
\definecolor{mycolor1}{rgb}{0.00000,0.44700,0.74100}
\definecolor{mycolor2}{rgb}{0.85000,0.32500,0.09800}
\definecolor{mycolor3}{rgb}{0.92900,0.69400,0.12500}
\definecolor{mycolor4}{rgb}{0.46600,0.67400,0.18800}
\definecolor{mycolor5}{rgb}{0.49400,0.18400,0.55600}
\title[Minimum energy estimation for a cubic wave equation]{Local well-posedness of the minimum energy estimator for a defocusing cubic wave equation}
\author{Jesper Schr\"oder${}^\dagger$}
\address{${}^{\dagger}$  Institute of Mathematics MA\,{}4-4, Technische Universit\"at Berlin, Stra\ss e des 17.~Juni 136, 10623 Berlin, Germany}
\email{j.schroeder@tu-berlin.de}
\date{\today}
\keywords{nonlinear observer design, minimum energy estimation, cubic wave equation, optimal control, Riccati equations}
\begin{document}

\begin{abstract}
	This work is concerned with the \textit{minimum energy estimator} for a nonlinear hyperbolic partial differential equation. The \textit{Mortensen observer} -- originally introduced for the energy-optimal reconstruction of the state of nonlinear finite-dimensional systems -- is formulated for a disturbed cubic wave equation and the associated observer equation is derived. An in depth study of the associated optimal control problem and sensitivity analysis of the corresponding value function reveals that the energy optimal state estimator is well-defined. Deploying a classical fixed point argument we proceed to show that the observer equation is locally well-posed.
\end{abstract}

\maketitle
{\footnotesize \textsc{Keywords:} nonlinear observer design, minimum energy estimation, cubic wave equation, optimal control, Riccati equations}

{\footnotesize \textsc{AMS subject classification:} 35L71, 49J99, 93B53}
 
\section{Introduction}~\\
%
We consider the task of reconstructing the state of the system that is modeled by a cubic wave equation subject to disturbances given as
\begin{equation}\label{eq: DistSysIntro}
	\begin{alignedat}{2}
		\partial_{tt} {u} + {u}^3 &= \Delta {u} + v
		~~~~~ &&\text{ in } (0,T) \times \Omega, \\
		{u}(0) &= u_1 + \eta_1
		~~~~~&&\text{ in } \Omega,\\
		\partial_t {u}(0) &= u_2 + \eta_2
		~~~~~&&\text{ in } \Omega,
	\end{alignedat}
\end{equation}
where $\Omega \subset \mathbb{R}^3$ is the physical domain with sufficiently smooth boundary and the equation is complemented by homogeneous Dirichlet boundary conditions. The displacement of the wave at time $t \in [0,T]$ and position $x \in \Omega$ is denoted by $u(t,x)$. As usual $\Delta$ denotes the Laplacian on the spatial variable. We work in the usual \textit{energy space} $\mathcal{E} \coloneqq H^1_0(\Omega) \times L^2(\Omega)$ and the modeled initial displacement and velocity are given by $u_1 \in H^1_0(\Omega)$ and $u_2 \in L^2(\Omega)$, respectively. Finally, $v \in L^2(0,T;L^2(\Omega))$ denotes the \textit{unknown} deterministic disturbance in the system dynamics while $\eta = (\eta_1,\eta_2) \in\mathcal{E}$ represents the \textit{unknown} error in the initial state. The objective is the approximation of the displacement $u$ and velocity $\partial_t u$ based on a disturbed output that is available through measurements taken from the system. The output is modeled as 
\begin{equation}\label{eq: DistObsIntro}
	y_\mathrm{abs} = C \begin{bmatrix} u \\ \partial_t u \end{bmatrix} + \mu,
\end{equation}
where $C$ is a bounded, linear operator mapping from $H^1_0(\Omega) \times L^2(\Omega)$ to the Hilbert space $Y$. The \textit{unknown} disturbance in the output is represented by $\mu \in L^2(0,T;Y)$.

\subsection{Motivation and available literature}\label{subsec: lit}~\\
Semilinear wave equations with monomial nonlinearities of the form $ u \mapsto u^{2p + 1} $ frequently appear in mathematical physics \cite{Hei52,Joe59,Stu04} and are well-studied by the mathematical community. In particular, systems with physical dimension three have been of interest and authors differentiate the \textit{subcritical} ($p<2$), the \textit{critical} ($p=2$) and the \textit{supercritical} ($p>2$) case. Proofs of well-posedness of the equations in various settings are given in \cite{GinVe85,Joe61,Sat66} and \cite{Fei94,KalSZ16} study the wave equations with respect to attractors for the subcritical and the critical case. Further \cite{DehEtAl03} discusses stabilization and control of the subcritical case and \cite{KunM20} is concerned with optimal control of the critical case. We note that the positive sign of the nonlinearity is essential, as a negative sign is well-known to cause ill-posedness due to finite-time blow-ups, cf., \cite{Lio85}.
The study of designs for the state-reconstruction for disturbed dynamical systems has a rich history going back at least to Wiener's work \cite{Wie49} who interpreted the disturbances as random processes laying the foundation to the field of \textit{filtering theory}. In their pioneering works tackling time-discrete systems \cite{Kal60} and time-continuous systems \cite{KalB61} Kalman and Bucy introduce a filter optimal in squared expectation for linear dynamics. The demand to generalize the widely popular \textit{Kalman(-Bucy) filter} to nonlinear dynamics has inspired numerous designs such as the the \textit{extended Kalman filter} obtained by linearization of the dynamics along the filter and the more elaborate \textit{unscented Kalman filter} \cite{MeWaJu04}. While they are successfully applied in various settings, their derivation is purely heuristic and they lack the optimality property of the original Kalman filter. 

Taking a deterministic viewpoint, in \cite{Mor68} Mortensen formulates and formally reviews the so-called \textit{maximum-likelihood estimator}, today also known as the \textit{Mortensen observer}, which is constructed as the trajectory minimizing the squared energy of the unknown disturbances. As mentioned in the original work, applied to linear systems the \textit{Mortensen observer} reduces to the Kalman filter equations, see also \cite{Wil04} for the deterministic derivation of the Kalman filter. Therefore it can be interpreted as a truncation free generalization of the (deterministic interpretation of the) Kalman filter to nonlinear systems. Its dependence on the \textit{value function} corresponding to the optimal control problem associated with the energy minimization, however, inflicts the \textit{curse of dimensionality} making its numerical realization a challenging task.
Since its formal introduction the Mortensen observer has been studied for numerous classes of finite-dimensional systems, first in \cite{Fle97} under the assumption of smooth dynamics with globally bounded derivatives followed by \cite{Kre03} showing convergence of the observer assuming the dynamics to be globally Lipschitz. More recently a discrete-time version for affine dynamics was analyzed in \cite{Moi18} and \cite{BreKu21} proposes an approximation of the observer based on neural networks. The authors of \cite{ChaEtAl23} introduce and analyze a version for non-smooth systems. In particular a lack of equivalence with the stochastic interpretation is shown. In \cite{BreS24} a proof of well-posedness is given that drops the assumption of global Lipschitz continuity and in turn yields local results and \cite{BreKuSc23} proposes a numerical realization based on a polynomial approximation of the value function.

While the aforementioned literature exclusively discusses systems of finite dimension, generalizations to systems governed by partial differential equations are available for some of the concepts. We refer to the review article \cite{Cur75} on infinite-dimensional filtering that in particular discusses the Kalman filter and its well-posedness, see also \cite{Ben03}. Taking the deterministic perspective the authors of \cite{AfsGeMo23} were recently able to show well-posedness of the extended Kalman filter for a class of infinite-dimensional systems. While a wide array of observer designs for systems governed by nonlinear partial differential equations is available in the literature (e.g.,~\cite{AhmEtAl16,JadMeKu11,Rod21,ZhaWu20}),
to the best of the author's knowledge no rigorous analysis of the Mortensen observer applied to infinite-dimensional systems is available yet. 
%
\subsection{The Mortensen observer for a nonlinear wave equation}\label{subsec: Mortensen}~\\
%
In the following we introduce the Mortensen observer for the system described by \eqref{eq: DistSysIntro} and \eqref{eq: DistObsIntro} and present the formal derivation of the associated observer equation. Due to its formal nature it is conducted entirely analogous to the finite-dimensional case, see e.g. \cite{BreKu21}. For the sake of self-containedness we present a brief exposition of the concept.

In order to follow the structure found in \cite{BreKu21} we state the first order form of the wave equation and its associated output, which reads
\begin{equation}\label{eq: DistSysIntroCauch}
	\begin{aligned}
		\dot{w}(t) 
		&= \underbrace{ 
			\begin{bmatrix} 0 & \mathrm{Id} \\ \Delta &0 \end{bmatrix} w(t) 
			+ \begin{bmatrix} 0 \\ - w_1^3(t) \end{bmatrix}
		}_{\eqqcolon F(w(t))}
		+ B v(t),
		~~~~~ t \in (0,T),\\
		w(0) &= w_0 + \eta,\\
		y_\mathrm{abs}(t) &= C w(t) + \mu(t),
	\end{aligned}
\end{equation}
where
$w(t) = \begin{bmatrix} w_1(t) \\ w_2(t) \end{bmatrix} 
= \begin{bmatrix} u(t) \\ \partial_t u(t) \end{bmatrix}$,
$B v = \begin{bmatrix} 0 \\ v \end{bmatrix}$,
$w_0 = \begin{bmatrix} u_1 \\ u_2 \end{bmatrix}$,
and
$\eta = \begin{bmatrix} \eta_1 \\ \eta_2 \end{bmatrix}$.
The construction of the state estimate $\widehat{w}(t)$ at time $t \in (0,T]$ is based on the optimal control problem 
\begin{align}
	\begin{split}
		\min_{w,v} J(w,v;t)
		&= \frac{1}{2} \left\Vert w(0) - w_0 \right\Vert_\mathcal{E}^2
		+ \frac{1}{2} \int_0^t \Vert v (s) \Vert_{L^2(\Omega)}^2 
		+ \alpha \left\Vert y_\mathrm{abs}(s) - C w(s) \right\Vert_{Y}^2 \, \mathrm{d}s,
		\label{eq: Jintro}
	\end{split}\\
	\begin{split}
		\text{s.t.}~~~\dot{w}(s) &= F(w(s)) + B v(s),
		~~~~~s \in (0,t),
		\label{eq: StateIntro}\\
		w(t) &= \xi, 
	\end{split}
\end{align}
with $y_\mathrm{abs}$ obtained via measurement and fixed final value $\xi$. 
Intuitively speaking, \eqref{eq: Jintro}-\eqref{eq: StateIntro} formalizes the task of finding energy minimal disturbances $v$, $\eta$, and $\mu$ that fit the output $y$ measured until time $t$ and the fixed state $\xi$ at time $t$. The associated value function
\begin{equation*}
	\mathcal{V}(t,\xi;y_\mathrm{abs}) = \inf J(w,v;t)
	~~~~~\text{subject to}~ \eqref{eq: StateIntro}
\end{equation*} 
represents the \textit{minimal amount of energy} required to enforce the state $\xi$ at time $t$ and is formally associated with the \textit{Hamilton-Jacobi-Bellman equation}
\begin{equation}\label{eq: HJBIntro}
	\begin{aligned}
		\partial_t \mathcal{V}(t,\xi)
		= - \left( D_\xi \mathcal{V}(t,\xi), F(\xi) \right)_\mathcal{E}
		- \frac{1}{2} \Vert B^* D_\xi \mathcal{V}(t,\xi) \Vert_{L^2(\Omega)}^2
		+\frac{\alpha}{2} \Vert y_\mathrm{abs}(t) - C\xi \Vert_Y^2.
	\end{aligned}
\end{equation}
The Mortensen observer is defined pointwise in time as the energy minimizing state, i.e.,
\begin{equation}
	\widehat{w}(t) \coloneqq \argmin_{\xi \in \mathcal{E}} \mathcal{V}(t,\xi;y_\mathrm{abs}),~~~~~ t \in [0,T].
\end{equation}
For a derivation of a governing equation first note that it immediately follows
\begin{equation*}
	D_\xi \mathcal{V}(t,\widehat{w}(t)) = 0 ~~~~~\forall t \in [0,T].
\end{equation*}
Taking a time derivative and applying the chain rule yields
\begin{equation}\label{eq: ObsDer}
	\partial_t D_\xi \mathcal{V}(t,\widehat{w}(t))
	= - D_{\xi\xi}^2 \mathcal{V}(t,\widehat{w}(t)) \, \dot{\widehat{w}}(t).
\end{equation}
Taking a derivative with respect to $\xi$ along $\widehat{w}(t)$ of the HJB \eqref{eq: HJBIntro} shows
\begin{equation*}
	D_\xi \partial_t \mathcal{V}(t,\widehat{w}(t))
	=
	- D_{\xi\xi}^2 \mathcal{V}(t,\widehat{w}(t)) \, F(\widehat{w}(t))
	- \alpha C^* \left( y_\mathrm{abs}(t) - C \widehat{w}(t) \right).
\end{equation*}
Switching the order of differentiation on the left hand side and inserting \eqref{eq: ObsDer} leads to
\begin{equation*}
	D_{\xi\xi}^2 \mathcal{V}(t,\widehat{w}(t)) \, \dot{\widehat{w}}(t)
	= D_{\xi\xi}^2 \mathcal{V}(t,\widehat{w}(t)) \, F(\widehat{w}(t))
	+ \alpha C^* \left( y_\mathrm{abs}(t) - C \widehat{w}(t) \right).
\end{equation*}
Finally by assuming $D_{\xi\xi}^2 \mathcal{V}(t,\widehat{w}(t))$ is an isomorphism and applying its inverse we arrive at the Mortensen observer equation
\begin{equation}\label{eq: MorForm}
	\dot{\widehat{w}}(t)
	= F(\widehat{w}(t))
	+ \alpha D_{\xi\xi}^2 \mathcal{V}(t,\widehat{w}(t))^{-1} C^* 
	\left( y_\mathrm{abs}(t) - C \widehat{w}(t) \right).
\end{equation}
\begin{remark}
	We comment on several critical points regarding the optimal control problem \eqref{eq: Jintro}-\eqref{eq: StateIntro}.	
	\begin{enumerate}[label=(\roman*)] 
		\item Even though \eqref{eq: Jintro}-\eqref{eq: StateIntro} is formulated as an optimal control problem, we do not have any means to influence the system. Here, the optimal control formulation is to be understood as a tool to reconstruct energy minimal disturbances only. Nonetheless, since the technical analysis of the problem is tackled using strategies from optimal control, throughout this work $v$ will be referred to as a control.
		\item The dynamical system \eqref{eq: DistSysIntroCauch} as well as the observer equation \eqref{eq: MorForm} are given an initial condition and evolve forward in time. The analysis, however, is in large parts concerned with the state equation \eqref{eq: StateIntro} which evolves backward in time. While this poses a major challenge for parabolic dynamics such as diffusion, it does not pose any issue for the undamped hyperbolic dynamics under consideration, cf., \Cref{lem: skewAdj} below. 
	\end{enumerate}
\end{remark}
%
%
\subsection{Contributions of this work}~\\
%
The foregoing formal derivation illustrates the most pressing questions regarding well-posedness of the concept. In the following we summarize our contributions:
\begin{itemize}
	\item[(i)] We perform a rigorous analysis of the optimal control problem in order to show local $C^\infty$ regularity of the associated value function $\mathcal{V}$ with respect to space and measurement, cf., \Cref{cor: ValFunReg}. This in particular implies the existence of its second derivative $D_{\xi\xi}^2 \mathcal{V}$.
	\item[(ii)] By analyzing the linearized optimal control we conduct the proof of \Cref{prop: D2Coerc} stating that under assumptions on the data $D_{\xi\xi}^2 \mathcal{V}$ is a coercive bilinear form, implying that the associated linear mapping is an isomorphism and hence invertible and further that $\mathcal{V}$ is locally strictly coercive.
	\item[(iii)] By an application of the implicit function theorem we show existence of a unique root of $D_\xi \mathcal{V}(t,\cdot)$, ensuring the well-definedness of the Mortensen observer as a pointwise in time minimizer of the squared energy of disturbances, cf., \Cref{cor: MorWD}. 
	\item[(iv)] Finally we utilize existence and boundedness of $D_{\xi\xi}^2 \mathcal{V}^{-1}$ to apply a fixed point argument and show existence of a locally unique mild solution to the observer equation, cf., \Cref{thm: ObsEqWP}.
\end{itemize}
In large parts the paper follows the structure of \cite{BreS24} where we investigated these issues for finite-dimensional systems. In the setting considered in this work, however, solutions of the state equation are less regular and therefore alternative techniques are deployed for the analysis of the second derivative of the value function, cf., \Cref{sec: CharSecDer} and well-posedness of the observer equation, cf., \Cref{subsec: MorObsEq}. In particular, the analytical arguments deployed here do not rely on a Hamilton-Jacobi-Bellman equation.
%
\subsection{Notation}~\\
%
We denote by $\Omega$ a bounded domain from $\mathbb{R}^3$ with sufficiently smooth boundary $\Gamma$. By $L^p(\Omega)$, $1 \leq p \leq \infty$ and $H^1(\Omega)$ we denote the Lebesgue spaces and the Sobolev space of functions with a first weak derivative in $L^2(\Omega)$, respectively. Further $H^1_0(\Omega) \subset H^1(\Omega)$ denotes the subspace of all elements with zero trace on $\Gamma$. The associated norms are $\Vert w \Vert_{L^p(\Omega)}^p = \int_\Omega \Vert w(x) \Vert^p \,\mathrm{d}x $, $\Vert w \Vert_{L^\infty(\Omega)} = \esssup_{x \in \Omega} \Vert x \Vert $, and $\Vert w \Vert_{H^1_0(\Omega)} = \Vert \nabla w \Vert_{L^2(\Omega)}$. The space $H^{-1}(\Omega)$ is the usual Sobolev space equipped with its natural norm. 

Let $N \in \mathbb{N}$ and $X_i$, $i = 1,...,N$, $X$ and $Y$ be Banach spaces. The Cartesian product of $X_i$ is denoted by $X_1 \times ... 
\times X_N$. Unless mentioned otherwise its norm is $\Vert (x_1,...,x_N) \Vert_{X_1 \times... \times X_N} = \max\limits_{i=1,...,N} \Vert x_i \Vert_{X_i}$. The space of all multilinear forms mapping from $X_1 \times ... \times X_N$ to $Y$ is denoted as $\mathcal{L}(X_1,...,X_n;Y)$ and is equipped with the standard norm. The space $\mathcal{L}(X;X)$ is denoted by $\mathcal{L}(X)$. The identity operator on $X$ is denoted by $\mathrm{Id}_X$, where the subscript is dropped whenever the space is clear from context. For an element $x \in X$ and a real number $\epsilon > 0$ the open ball of radius $\epsilon$ around $x$ is denoted by $\mathcal{U}_\epsilon(x)$. The weak convergence of a sequence $x_k$ to some $x$ is denoted as $x_k \rightharpoonup x$, for $k \to \infty$. 

For a function $f \colon X \to Y$ its Fr\'{e}chet derivative at $x \in X$ applied to $z \in X$ is denoted by $Df(x)[z]$. For a function $g \colon X_1 \times X_2 \to Y$ the partial Fr\'{e}chet derivative with respect to the first variable at $(x_1,x_2) \in X_1 \times X_2 $ applied to $z \in X_1$ is denoted by $D_{x_1}g(x_1,x_2)[z]$.  Higher order and mixed partial derivatives are denoted with appropriate indices, e.g., $D^2_{x_1 x_2}g(x_1,x_2)[z_2,z_1]$ denotes the second order partial derivative at $(x_1,x_2)$ applied to $z_2 \in X_2$ and $z_1 \in X_1$. A combination of full and partial derivatives is denoted as $D_{x_1} D g(x_1,x_2) [(z_1,z_2),z_1^\prime]$ an is to be understood as the partial derivative with respect to $x_1$ in direction $z_1^\prime$ of the full derivative in direction $(z_1,z_2) \in X_1 \times X_2$.

We denote the dual space of $X$ by $X^*$ and for $f \in X^*$ and $x \in X$ we denote their dual pairing as $\langle f, x \rangle_{X^*,X} \coloneqq f(x)$. For $A \in \mathcal{L}(X;Y)$ the adjoint operator is denoted as $A^* \in \mathcal{L}(Y^*;X^*)$. A continuous embedding of $X$ into $Y$ is denoted by $X \hookrightarrow Y$ and if the embedding is compact we write $X \doublehookrightarrow Y$.

For $1 \leq p \leq \infty $ we denote by $L^p(0,T;X)$ the Lebesgue spaces, where integrals are understood in the Bochner sense and the norm is defined analogous to the real case. Furthermore $C([0,T];X)$ and $C^k([0,T];X)$ denote the spaces of functions $f \colon [0,T] \to X$ that are continuous and $k$ times continuously differentiable, respectively.

Throughout this work we use $c$ as a generic constant.
%
\section{Analytical framework}\label{sec: Bas}~\\
%
This section presents the basic notions required for our analysis. In particular appropriate solution concepts are presented and well-posedness of the wave equation is discussed.

We commence by stating a lemma essential for treating the cubic wave equation in three spatial dimensions.
Only because the spatial dimension is set to three we have the continuous embedding $H^1_0(\Omega) \hookrightarrow L^6(\Omega)$, see \cite[P.~II, Sec.~5.6, Thm.~2]{Eva10}, and throughout this work denote the associated constant by $c_\mathrm{em} > 0$.
An application of H\"olders inequality yields the following estimate.
\begin{lemma}\label{lem: cubicInL2}
	The product of three functions $w_1,w_2,w_3 \in H^1_0(\Omega)$ lies in $ L^2(\Omega)$ and it holds
	\begin{equation*}
		\Vert w_1 \, w_2 \, w_3 \Vert_{L^2(\Omega)}
		\leq c_\mathrm{em}^3 \, \Vert w_1 \Vert_{H^1_0(\Omega)}
		\Vert w_2 \Vert_{H^1_0(\Omega)}
		\Vert w_3 \Vert_{H^1_0(\Omega)}.
	\end{equation*}
\end{lemma}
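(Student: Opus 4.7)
The plan is a direct calculation combining H\"older's inequality with the Sobolev embedding $H^1_0(\Omega) \hookrightarrow L^6(\Omega)$, which is exactly the hint given just before the lemma statement.

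First I would rewrite the $L^2$ norm squared of the triple product as $\int_\Omega |w_1|^2 |w_2|^2 |w_3|^2 \,\mathrm{d}x$ and apply H\"older's inequality with three exponents each equal to $3$ (which sum to $1$ as required). This yields
\begin{equation*}
\int_\Omega |w_1|^2 |w_2|^2 |w_3|^2 \,\mathrm{d}x
\leq \bigl\||w_1|^2\bigr\|_{L^3(\Omega)} \bigl\||w_2|^2\bigr\|_{L^3(\Omega)} \bigl\||w_3|^2\bigr\|_{L^3(\Omega)}
= \prod_{i=1}^{3} \Vert w_i \Vert_{L^6(\Omega)}^2 ,
\end{equation*}
so taking square roots gives $\Vert w_1 w_2 w_3 \Vert_{L^2(\Omega)} \leq \prod_{i=1}^3 \Vert w_i \Vert_{L^6(\Omega)}$.

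Second, I would invoke the Sobolev embedding $H^1_0(\Omega) \hookrightarrow L^6(\Omega)$ valid because $\Omega \subset \mathbb{R}^3$, with embedding constant $c_{\mathrm{em}}$, to estimate each factor $\Vert w_i \Vert_{L^6(\Omega)} \leq c_{\mathrm{em}} \Vert w_i \Vert_{H^1_0(\Omega)}$. Multiplying the three estimates together produces the constant $c_{\mathrm{em}}^3$ and the claimed bound, which in particular also shows $w_1 w_2 w_3 \in L^2(\Omega)$.

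There is no real obstacle here; the only small thing to double check is the choice of H\"older exponents and the fact that the dimension restriction $n=3$ is precisely what makes $H^1_0 \hookrightarrow L^6$ work (for higher dimensions the embedding target would be a smaller Lebesgue exponent and the proof would fail). The argument is essentially one line and could even be written inline, but stating it as a lemma is justified because this estimate will be reused throughout the paper to handle the cubic nonlinearity $w_1^3$.
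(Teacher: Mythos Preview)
Your proof is correct and follows exactly the approach indicated in the paper, which simply states that the assertion is a direct consequence of the embedding $H^1_0(\Omega) \hookrightarrow L^6(\Omega)$ and H\"older's inequality. You have merely spelled out the (correct) details of that one-line argument.
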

\begin{proof}
	The assertion is a direct consequence of the embedding $H^1_0(\Omega) \hookrightarrow L^6(\Omega)$ and H\"older's inequality.
\end{proof}
%
%
\subsection{The forward equation}~\\
%
In this subsection we discuss the solution theory concerned with the forward equation \eqref{eq: DistSysIntro}.
For the sake of readability for $t \in(0,T]$ we introduce the notation $\mathcal{L}_t = L^2(0,t;L^2(\Omega))$.
To simplify the presentation for the moment we set $\eta = 0$ and the equation reads 
\begin{equation}\label{eq: FWE}\tag{FWE}
	\begin{alignedat}{2}
		\partial_{tt} u + u^3 &= \Delta u + v~~~~~ &&\text{ in } (0,T) \times \Omega, \\
		(u(0),\partial_t u(0)) &= (u_1,u_2)~~~
		&&\text{ in } \Omega.
	\end{alignedat}
\end{equation}
As above we assume $v \in \mathcal{L}_t$.
In the following we briefly introduce the analytical tools required to formulate the concept of a \textit{mild solution} to \eqref{eq: FWE}. The energy space $\mathcal{E} = H^1_0(\Omega) \times L^2(\Omega)$ is equipped with the scalar product 
\begin{equation*}
	\left( \begin{bmatrix} w_1 \\ w_2 \end{bmatrix} , \begin{bmatrix} p_1 \\ p_2 \end{bmatrix} \right)_\mathcal{E} 
	\coloneqq (w_1,p_1)_{H^1_0(\Omega)} + (w_2,p_2)_{L^2(\Omega)}
\end{equation*}
and forms a Hilbert space. In order to formulate the wave equation as a first order problem in $\mathcal{E}$ consider the linear, unbounded operator in $\mathcal{E}$ defined via
\begin{equation}\label{eq: mathcalA}
	\begin{aligned}
		\mathcal{D}(\mathcal{A}) = \mathcal{D}(\Delta) \times H^1_0(\Omega),
		~~~~~
		\mathcal{A} \begin{bmatrix} w_1 \\ w_2 \end{bmatrix} 
		= \begin{bmatrix} 0 & I \\ \Delta & 0 \end{bmatrix}
		\begin{bmatrix} w_1 \\ w_2 \end{bmatrix}
		= \begin{bmatrix} w_2 \\ \Delta w_1 \end{bmatrix},
	\end{aligned}
\end{equation}
where the Laplacian $\Delta$ is understood as a linear unbounded operator in $L^2(\Omega)$ defined via
\begin{equation*}
	\begin{aligned}
		\mathcal{D}(\Delta) 
		&= \left\{ w \in H^1_0(\Omega) \colon \exists f \in L^2(\Omega) \colon (\nabla w, \nabla \varphi)_{L^2(\Omega)} = - (f,\varphi)_{L^2(\Omega)} \forall \varphi \in H^1_0(\Omega) \right\}\\
		\Delta w &= f.
	\end{aligned}
\end{equation*}
We set $w_0 = \begin{bmatrix} u_1 \\ u_2 \end{bmatrix}$ and formally rewrite \eqref{eq: FWE} as the \textit{Cauchy problem}
\begin{equation}\label{eq: AbstrMod}
	\begin{aligned}
		\frac{\mathrm{d}}{\mathrm{d}t} w(t) \,
		&= \mathcal{A} \, w (t)
		+ \begin{bmatrix} 0 \\ - w_1^3(t) + v(t) \end{bmatrix}~\text{ for } t \in (0,T),~~~
		w(0) = w_0.
	\end{aligned}
\end{equation}
Note that any sufficiently smooth classical solution $u$ of \eqref{eq: FWE} yields a tuple $w = \begin{bmatrix} u \\ \partial_t u \end{bmatrix}$ satisfying \eqref{eq: AbstrMod}.
Before defining a proper solution concept associated with the Cauchy problem we state some properties of $\mathcal{A}$. 
\begin{lemma}\label{lem: skewAdj}
	The linear unbounded operator $\mathcal{A}$ defined in \eqref{eq: mathcalA} is skew-adjoint, i.e., $\mathcal{D}(\mathcal{A}^*) = \mathcal{D}(\mathcal{A})$ and for all $w \in \mathcal{D}(\mathcal{A})$ it holds $\mathcal{A}^* w = - \mathcal{A} w$.	Further $\mathcal{A}$ generates a unitary group on $\mathcal{E}$ denoted by $t \mapsto e^{\mathcal{A}t}$, i.e., for all $t \in \mathbb{R}$ and $w \in \mathcal{E}$ we have
	\begin{equation*}
		e^{\mathcal{A}t} \in \mathcal{L}(\mathcal{E}),
		~~~~~ \Vert e^{\mathcal{A}t} w \Vert_\mathcal{E} = \Vert w \Vert_\mathcal{E}.
	\end{equation*}
\end{lemma}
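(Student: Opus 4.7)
The plan is to derive both claims from Stone's theorem on one-parameter unitary groups, whose hypotheses split naturally into a skew-symmetry computation and a range condition. I would organize the proof in three steps.

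First, I would verify that $\mathcal{A}$ is densely defined and skew-symmetric. Density follows from $C_c^\infty(\Omega) \times C_c^\infty(\Omega) \subset \mathcal{D}(\Delta) \times H^1_0(\Omega)$. For skew-symmetry, given $w,p \in \mathcal{D}(\mathcal{A})$, I expand
\begin{equation*}
(\mathcal{A} w, p)_\mathcal{E}
= (w_2,p_1)_{H^1_0(\Omega)} + (\Delta w_1, p_2)_{L^2(\Omega)}
= (\nabla w_2, \nabla p_1)_{L^2(\Omega)} - (\nabla w_1, \nabla p_2)_{L^2(\Omega)},
\end{equation*}
where I use the definition of $\mathcal{D}(\Delta)$ to move the Laplacian onto $w_1$ as a gradient pairing against $p_2 \in H^1_0(\Omega)$. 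Applying the same identity with the roles of $w$ and $p$ swapped and comparing yields $(\mathcal{A} w, p)_\mathcal{E} = -(w, \mathcal{A} p)_\mathcal{E}$.

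Second, I would verify the range condition $\mathrm{Ran}(I \pm \mathcal{A}) = \mathcal{E}$. For $(f_1,f_2) \in \mathcal{E}$ and the sign $+$, solving $w_1 + w_2 = f_1$, $w_2 - \Delta w_1 = f_2$ reduces to the Helmholtz-type elliptic problem $w_1 - \Delta w_1 = f_2 + f_1 \in L^2(\Omega)$, which has a unique solution $w_1 \in \mathcal{D}(\Delta)$ by the Lax-Milgram theorem applied to the coercive bilinear form $(w_1,\varphi) \mapsto (w_1,\varphi)_{L^2} + (\nabla w_1, \nabla \varphi)_{L^2}$ on $H^1_0(\Omega)$; then $w_2 = f_1 - w_1 \in H^1_0(\Omega)$. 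The sign $-$ is handled identically.

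Third, combining skew-symmetry with the two range conditions, the Lumer–Phillips theorem (applied to both $\mathcal{A}$ and $-\mathcal{A}$) yields that $\mathcal{A}$ generates a $C_0$-group of contractions on $\mathcal{E}$, and the well-known consequence for skew-symmetric generators (equivalently, Stone's theorem) gives that $\mathcal{A}$ is in fact skew-adjoint and that the group is unitary. The identity $\Vert e^{\mathcal{A}t} w \Vert_\mathcal{E} = \Vert w \Vert_\mathcal{E}$ then follows either from the group property together with contractivity in both time directions, or directly by noting that for $w \in \mathcal{D}(\mathcal{A})$
\begin{equation*}
\frac{\mathrm{d}}{\mathrm{d}t} \Vert e^{\mathcal{A}t} w \Vert_\mathcal{E}^2
= 2 \bigl( \mathcal{A} e^{\mathcal{A}t} w, e^{\mathcal{A}t} w \bigr)_\mathcal{E} = 0,
\end{equation*}
with the general case obtained by density. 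I expect the verification of the range condition to be the only step requiring substantive argument; the rest is a bookkeeping exercise around the non-standard inner product on $H^1_0(\Omega)$ and the definition of $\mathcal{D}(\Delta)$ as an $L^2$-realization of the Laplacian.
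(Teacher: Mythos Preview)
Your proposal is correct and follows the same route as the paper, only more explicitly. The paper's proof is two sentences: integration by parts for skew-adjointness, then a citation (Stone's theorem in the form of \cite[Rem.~6.2.6]{JacZw12}) for the unitary group. You unpack the skew-adjointness claim into skew-symmetry plus the range condition $\mathrm{Ran}(I \pm \mathcal{A}) = \mathcal{E}$ and then invoke Lumer--Phillips/Stone, which is exactly the content the paper leaves implicit in the phrase ``integration by parts shows the skew-adjointness.'' One minor algebraic slip: for the $+$ sign the second component of $(I+\mathcal{A})w = f$ reads $w_2 + \Delta w_1 = f_2$, not $w_2 - \Delta w_1 = f_2$, so eliminating $w_2$ yields $w_1 - \Delta w_1 = f_1 - f_2$; this does not affect the Lax--Milgram argument.
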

\begin{proof}
	An application of integration by parts shows the skew-adjointness. According to \cite[Rem.~6.2.6]{JacZw12} any skew-adjoint operator generates a unitary group and the assertion is shown.
\end{proof}
In particular $\mathcal{A}$ generates a strongly continuous semi-group denoted by $e^{\mathcal{A}t}$ allowing the following definition of \textit{mild solutions}. For convenience we denote the space in which the solutions lie by $\mathcal{C}_t \coloneqq C([0,t];\mathcal{E})$.
\begin{definition}\label{def: MildSolForw}
	A function $w$ is called a mild solution to \eqref{eq: FWE} on $[0,T]$ if $w \in \mathcal{C}_T$ and for $t \in [0,T]$ it holds
	\begin{equation}\label{eq: mildSolu}
		w(t)
		= 
		e^{\mathcal{A}t} w_0
		+ \int_0^t e^{\mathcal{A}(t-s)} \begin{bmatrix} 0 \\ -w_1^3(s) + v(s) \end{bmatrix} \, \mathrm{d} s.
	\end{equation}
\end{definition}
\begin{remark}\label{rem: mildSol}
	\begin{enumerate}[label=(\roman*)] 
		\item We note that the integrand in \eqref{eq: mildSolu} is not trivially well defined. Only due to $\Omega \subset \mathbb{R}^3$ and \Cref{lem: cubicInL2} we have $w_1^3(s) \in L^2(\Omega)$. This does not need to hold for other exponents or dimensions of $\Omega$.
		\item As mentioned above a classical solution $u$ of \eqref{eq: FWE} is not a mild solution according to \Cref{def: MildSolForw}. Other authors avoid this issue using a different notion of mild solution, see e.g., \cite[Def.~2.1]{KunM20}. The definition stated in this work instead is in line with the notion of mild solutions for general Cauchy problems, cf., \cite[Def.~5.1.4]{CurZw20} and is better suited for our analysis.
		\item Any mild solution $w = \begin{bmatrix} w_1 \\ w_2 \end{bmatrix}$ satisfies $w_2 = \partial_t w_1$. This is due to the structure of $\mathcal{A}$ and can be seen by noting that $w$ solves a linear problem with a state dependent inhomogeneity and considering \cite[Prop.~3.1.16]{AreEtAl13}.
	\end{enumerate}
\end{remark}
We additionally introduce the notion of \textit{weak solutions}.
\begin{definition}\label{def: WeakSol}
	A function $u \in L^\infty(0,T;H^1_0(\Omega))$ with $ \partial_t u \in L^\infty(0,T;L^2(\Omega))$ is called a weak solution to \eqref{eq: FWE} if
	$u(0) = u_1$, $\partial_t u(0) = u_2$
	and additionally for all $\varphi \in C_0^\infty((0,T)\times \Omega)$ it holds
	\begin{equation}
		\begin{aligned}
			-\int_0^T (\partial_t u, \partial_t \varphi)_{L^2(\Omega)} \, \mathrm{d}t
			+ \int_0^T (u^3,\varphi)_{L^2(\Omega)} \, \mathrm{d}t
			= - \int_0^T (\nabla u,\nabla \varphi)_{L^2(\Omega)} \, \mathrm{d}t
			+ \int_0^T (v,\varphi)_{L^2(\Omega)} \, \mathrm{d}t.
		\end{aligned}
	\end{equation}
	Note that we have $u^3 \in L^\infty(0,T;L^2(\Omega))$, and hence $ \partial_{tt} u \in L^\infty(0,T;H^{-1}(\Omega))$. Therefore $u \in C([0,T];L^2(\Omega))$ and $ \partial_t u \in C([0,T];H^{-1}(\Omega))$ and the initial conditions are well-defined.	
\end{definition}
The concept of a weak solution already makes sense for initial values from $L^2(\Omega) \times H^{-1}(\Omega)$ and right hand sides $v \in L^1(0,T;H^{-1}(\Omega))$, leading to less regular solutions. For our purpose we assume more regular data leading to equivalence of mild and weak solutions. While the correspondence of mild and weak solutions is well-known in the general linear setting, see e.g. \cite[Thm.~5.1.10]{CurZw20}, in our case this issue requires some extra care. 
\begin{lemma}\label{lem: MildEqWeak}
	A function $w= \begin{bmatrix} w_1 \\ w_2 \end{bmatrix} = \begin{bmatrix} w_1 \\ \partial_t w_1 \end{bmatrix}$ is a mild solution to \eqref{eq: FWE} if and only if $w \in \mathcal{C}_T$ and $w_1$ is a weak solution.
\end{lemma}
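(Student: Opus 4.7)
The plan is to view the cubic term as a prescribed inhomogeneity and to reduce the equivalence to the classical correspondence between mild and weak solutions of a linear Cauchy problem. Given any candidate with $w_1 \in L^\infty(0,T;H^1_0(\Omega))$, \Cref{lem: cubicInL2} yields $w_1^3 \in \mathcal{L}_T$, so the inhomogeneity $f(s) \coloneqq -w_1^3(s) + v(s)$ belongs to $\mathcal{L}_T$. Both \eqref{eq: mildSolu} and the weak identity in \Cref{def: WeakSol} then become statements about the \emph{linear} Cauchy problem
\[
  \dot z(s) = \mathcal{A}\, z(s) + \begin{bmatrix} 0 \\ f(s) \end{bmatrix}, \qquad z(0) = w_0.
\]

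For the direction ``mild $\Rightarrow$ weak'' I would start from $w \in \mathcal{C}_T$ satisfying \eqref{eq: mildSolu}. Then $f \in \mathcal{L}_T$ by \Cref{lem: cubicInL2}, so $w$ is the mild solution of the above linear problem with this right-hand side. Since $\mathcal{A}$ is skew-adjoint (\Cref{lem: skewAdj}), the classical equivalence for linear Cauchy problems (e.g.~\cite[Thm.~5.1.10]{CurZw20}) shows that $w$ satisfies the associated vector-valued weak identity, tested against $\mathcal{E}$-valued smooth functions. Specializing the test functions to $(0,\varphi)$ with $\varphi \in C_0^\infty((0,T)\times \Omega)$ and invoking the identification $w_2 = \partial_t w_1$ from \Cref{rem: mildSol}(iii), a standard integration by parts transfers one time derivative from $w_2$ and the spatial derivatives from $w_1$ onto $\varphi$, yielding precisely the identity of \Cref{def: WeakSol}.

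For the converse ``weak $\Rightarrow$ mild'' I would, given a weak solution $w_1$, set $w_2 \coloneqq \partial_t w_1$ and define $f \coloneqq -w_1^3 + v \in \mathcal{L}_T$ using \Cref{lem: cubicInL2} and the $L^\infty$-regularity from \Cref{def: WeakSol}. Let $\tilde w \in \mathcal{C}_T$ denote the unique mild solution of the associated linear Cauchy problem. The first direction, applied to the linear theory, shows that $\tilde w_1$ is a weak solution of the linear wave equation with source $v - w_1^3$ and initial data $(u_1,u_2)$. Since $w_1$ satisfies the same weak identity, the difference $e \coloneqq w_1 - \tilde w_1$ is a weak solution of the homogeneous linear wave equation with vanishing initial data. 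A standard energy estimate then gives $e \equiv 0$, whence $w = \tilde w \in \mathcal{C}_T$ satisfies \eqref{eq: mildSolu}.

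The main technical obstacle is the uniqueness step in the converse direction: the natural test function $\partial_t e$ is not admissible in $C_0^\infty((0,T)\times\Omega)$, and the regularity $e \in L^\infty(0,T;H^1_0(\Omega))$ with $\partial_t e \in L^\infty(0,T;L^2(\Omega))$ does not allow one to simply insert it. I would handle this by a time-mollification argument (or, equivalently, by invoking the Lions--Magenes energy identity for linear hyperbolic problems at this regularity level), which justifies the energy estimate and closes the argument.
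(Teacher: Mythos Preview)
Your approach is correct and is the standard strategy. The paper itself does not give a proof of this lemma: it simply writes that the argument is analogous to \cite[Lem.~2.6]{KunM20} and omits it. That reference treats the critical quintic wave equation, and the method there is precisely what you outline --- freeze the nonlinearity as a known inhomogeneity $f = -w_1^3 + v \in \mathcal{L}_T$ (using \Cref{lem: cubicInL2}) and invoke the linear correspondence between mild and weak solutions.

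Two minor remarks. First, in the converse direction the lemma already \emph{assumes} $w \in \mathcal{C}_T$, so you do not need to recover this regularity from scratch; your argument in fact proves slightly more than required, since you deduce $w = \tilde w \in \mathcal{C}_T$ from the weak formulation alone. That is fine, but it means the uniqueness step could be run directly with $w$ and $\tilde w$ both already known to lie in $\mathcal{C}_T$, which makes the energy identity for the difference $e = w - \tilde w$ immediately available without mollification (the pairing $(\partial_t e, e)_{\mathcal E}$ is then well-defined pointwise in time). Second, your handling of the obstruction that $\partial_t e$ is not an admissible test function is the right instinct in the more general setting; the Lions--Magenes argument you cite is exactly how one justifies it at the $L^\infty$ regularity level, and this is implicitly what \cite{KunM20} relies on as well.
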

The proof can be done analogous to the proof of \cite[Lem.~2.6]{KunM20} and is omitted here. Note that at this point there might be discontinuous weak solutions that do not yield any mild solution. This possibility is ruled out by the following result on well-posedness of weak solutions. 
\begin{proposition}\label{prop: WaveWP}
	For any $u_1 \in H^1_0(\Omega)$, $u_2 \in L^2(\Omega)$ and $v \in \mathcal{L}_t$ there exists exactly one weak solution $u \in L^\infty(0,T;\mathcal{E})$ to \eqref{eq: FWE}. Further $u \in C([0,T];H^1_0(\Omega))$ and $\partial_t u \in C([0,T];L^2(\Omega))$. 
\end{proposition}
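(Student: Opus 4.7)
The plan is to prove existence by a Galerkin approximation combined with a global energy estimate that crucially exploits the positive sign of the cubic nonlinearity, then establish uniqueness and strong continuity by a Grönwall argument based on the Sobolev embedding $H^1_0(\Omega) \hookrightarrow L^6(\Omega)$. I would not attempt to construct mild solutions directly via a fixed point here, because the subcritical growth and the defocusing sign make the energy method both shorter and more transparent.

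For existence, first I would choose an orthonormal basis $\{\phi_k\}$ of $L^2(\Omega)$ consisting of eigenfunctions of $-\Delta$ with homogeneous Dirichlet boundary conditions and form the Galerkin ansatz $u^N(t,x) = \sum_{k=1}^N g_k^N(t)\phi_k(x)$. Testing the equation against $\phi_j$ for $j=1,\dots,N$ produces a locally Lipschitz system of ODEs for $g^N$, hence a local solution by Picard–Lindelöf. Next I would test the approximate equation against $\partial_t u^N$, which after integration yields the energy identity
\begin{equation*}
    \tfrac{1}{2}\|\partial_t u^N(t)\|_{L^2}^2 + \tfrac{1}{2}\|\nabla u^N(t)\|_{L^2}^2 + \tfrac{1}{4}\|u^N(t)\|_{L^4}^4
    = E^N(0) + \int_0^t (v(s),\partial_t u^N(s))_{L^2}\,\mathrm{d}s.
\end{equation*}
Here the defocusing sign keeps the $L^4$-term on the good side, and a Cauchy–Schwarz/Young argument together with Grönwall gives a uniform-in-$N$ bound on $u^N$ in $L^\infty(0,T;H^1_0(\Omega))$ and on $\partial_t u^N$ in $L^\infty(0,T;L^2(\Omega))$. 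This in particular prevents finite time blow-up and extends the approximations to $[0,T]$. Passing to a weak-$\ast$ limit $u$, the linear terms converge trivially; for the nonlinearity I would use that $u^N$ is uniformly bounded in $L^\infty(0,T;L^6(\Omega))$ by Lemma analogous to \Cref{lem: cubicInL2}, so $(u^N)^3$ is uniformly bounded in $L^\infty(0,T;L^2(\Omega))$. Combining this with the Aubin–Lions lemma (using $\partial_t u^N$ bounded in $L^\infty(0,T;L^2(\Omega))$ and $H^1_0(\Omega)\doublehookrightarrow L^2(\Omega)$) yields strong convergence of $u^N$ in $L^2(0,T;L^2(\Omega))$, hence almost everywhere convergence along a subsequence, identifying the weak limit of $(u^N)^3$ as $u^3$. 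The initial conditions are preserved in the usual weak sense.

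For uniqueness, I would take two weak solutions $u^{(1)},u^{(2)}$ in the stated class, set $w = u^{(1)}-u^{(2)}$, and use the factorization
\begin{equation*}
    |u^{(1)\,3}-u^{(2)\,3}| \leq 3\bigl(|u^{(1)}|^2+|u^{(2)}|^2\bigr)|w|.
\end{equation*}
Testing the difference equation against $\partial_t w$ in the energy sense and applying Hölder with the embedding $H^1_0(\Omega)\hookrightarrow L^6(\Omega)$ gives
\begin{equation*}
    \bigl\|u^{(1)\,3}-u^{(2)\,3}\bigr\|_{L^2(\Omega)} \leq c\bigl(\|u^{(1)}\|_{H^1_0(\Omega)}^2+\|u^{(2)}\|_{H^1_0(\Omega)}^2\bigr)\|w\|_{H^1_0(\Omega)},
\end{equation*}
which by the a priori $L^\infty(0,T;H^1_0(\Omega))$ bounds is integrable in time. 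Grönwall applied to the energy of $w$ then yields $w\equiv 0$. Finally, strong continuity follows from the weak continuity $(u,\partial_t u)\in C_w([0,T];\mathcal{E})$ combined with continuity of the energy functional in $t$, obtained from the limiting energy identity.

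The main obstacle I expect is justifying the energy identity at the limit level rigorously enough to use it both for uniqueness (where testing against $\partial_t w$ is not immediately admissible since $\partial_{tt}w$ lies only in $L^\infty(0,T;H^{-1}(\Omega))$) and for the strong continuity argument. In the subcritical regime this is standard — one regularizes in time, integrates by parts, and passes to the limit — but it is the delicate step that cannot be glossed over. Everything else follows a well-trodden path for defocusing semilinear wave equations below the critical exponent, and this is why the paper can legitimately refer the reader to the literature.
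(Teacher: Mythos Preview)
Your proposal is correct and follows the same overall strategy the paper invokes: the paper does not give a detailed proof but refers to Lions' classical Galerkin argument for existence and uniqueness, which is precisely what you outline. The one place where your route diverges is the strong continuity statement. You argue via weak continuity in $\mathcal{E}$ combined with continuity of the energy norm, which --- as you rightly flag --- requires justifying the limiting energy identity. The paper instead observes that once $u \in L^\infty(0,T;H^1_0(\Omega))$ is known, one has $u^3 \in L^\infty(0,T;L^2(\Omega))$ by \Cref{lem: cubicInL2}, so $u$ is the weak solution of a \emph{linear} wave equation with inhomogeneity $f = v - u^3 \in L^2(0,T;L^2(\Omega))$, and the continuity $u \in C([0,T];H^1_0(\Omega))$, $\partial_t u \in C([0,T];L^2(\Omega))$ follows directly from the standard linear regularity result of Lions--Magenes. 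This reduction to the linear theory sidesteps the delicate step you identify and is somewhat cleaner to cite, though your argument is equally valid and more self-contained.
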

The proof can be conducted by a well-known \textit{Galerkin approximation}. For a thorough presentation we refer to classical results by Lions. Existence and uniqueness of a weak solution $u$ are presented as Th\'eor\`eme 1.1 and Th\'eor\`eme 1.2 in \cite{Lio69}, respectively. The asserted regularity follows by noting that $u$ is the weak solution of a linear wave equation with inhomogeneity $f = v - u^3 \in \mathcal{L}_t$ and considering \cite[Ch.~3, Thm.~8.2]{LioM72}.

Well-posedness of mild solutions follows immediately. 
\begin{corollary}\label{cor: WE_WP}
	For $u_1 \in H^1_0(\Omega)$, $u_2 \in L^2(\Omega)$ and $v \in \mathcal{L}_t$ there exists exactly one mild solution $w \in \mathcal{C}_T$ to \eqref{eq: FWE}.
\end{corollary}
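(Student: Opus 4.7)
The plan is to obtain \Cref{cor: WE_WP} as a direct combination of \Cref{prop: WaveWP}, \Cref{lem: MildEqWeak}, and item (iii) of \Cref{rem: mildSol}. The entire argument should amount to transporting the weak-solution well-posedness to the mild-solution setting; no new analytical input is required, so the ``hard part'' is purely bookkeeping of regularity and of the relation $w_2 = \partial_t w_1$.

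For existence, I would start from \Cref{prop: WaveWP}, which yields a unique weak solution $u$ with the additional regularity $u \in C([0,T];H^1_0(\Omega))$ and $\partial_t u \in C([0,T];L^2(\Omega))$. Setting
\begin{equation*}
	w \coloneqq \begin{bmatrix} u \\ \partial_t u \end{bmatrix},
\end{equation*}
this continuity gives exactly $w \in \mathcal{C}_T$. Since $w_1 = u$ is a weak solution in the sense of \Cref{def: WeakSol}, the ``if'' direction of \Cref{lem: MildEqWeak} applies and shows that $w$ is a mild solution to \eqref{eq: FWE} in the sense of \Cref{def: MildSolForw}.

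For uniqueness, suppose $\widetilde{w} = (\widetilde{w}_1,\widetilde{w}_2)^\top \in \mathcal{C}_T$ is another mild solution. By item (iii) of \Cref{rem: mildSol} we automatically have $\widetilde{w}_2 = \partial_t \widetilde{w}_1$, so it suffices to identify the first components. Applying the ``only if'' direction of \Cref{lem: MildEqWeak}, $\widetilde{w}_1$ is a weak solution of \eqref{eq: FWE}, and since $\widetilde{w}_1 \in C([0,T];H^1_0(\Omega))$ and $\partial_t \widetilde{w}_1 \in C([0,T];L^2(\Omega))$, it lies in the class in which \Cref{prop: WaveWP} asserts uniqueness. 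Hence $\widetilde{w}_1 = u = w_1$, and then $\widetilde{w}_2 = \partial_t \widetilde{w}_1 = \partial_t u = w_2$, so $\widetilde{w} = w$.

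The only subtle point is to verify that the cubic term inside the Duhamel integral of \eqref{eq: mildSolu} makes sense in $L^2(\Omega)$ along the mild solution; this is precisely what \Cref{lem: cubicInL2} guarantees, because $w_1(s) \in H^1_0(\Omega)$ for every $s$ and the map $s \mapsto w_1^3(s)$ is therefore an $L^\infty$ function with values in $L^2(\Omega)$. With this small verification in place, both the existence and the uniqueness parts go through cleanly, and the corollary follows immediately.
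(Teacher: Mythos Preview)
Your proposal is correct and matches the paper's approach exactly: the paper does not spell out a proof but simply states that well-posedness of mild solutions follows immediately, relying on precisely the combination of \Cref{prop: WaveWP} and \Cref{lem: MildEqWeak} (together with \Cref{rem: mildSol}(iii)) that you describe. Your added remark about the integrability of the cubic term is already covered by \Cref{rem: mildSol}(i), so nothing further is needed.
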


From here on the term solution will refer to a mild solution unless stated otherwise. We conclude the section by defining the \textit{nominal trajectory}. 
The model equation in its strong form is given by 
\eqref{eq: DistSysIntro} with $v = 0$ and $\eta = 0$, i.e.,
\begin{equation*}
	\begin{alignedat}{2}
		\partial_{tt} \Tilde{u} + \Tilde{u}^3 &= \Delta \Tilde{u}~~~~~ &&\text{ in } (0,T) \times \Omega, \\
		(\Tilde{u}(0),\partial_t \Tilde{u}(0)) &= (u_1,u_2)~~~
		&&\text{ in } \Omega.
	\end{alignedat}
\end{equation*}
According to \Cref{cor: WE_WP} there exists a unique mild solution which throughout this chapter we denote by $\Tilde{w} \in \mathcal{C}_T$ and refer to it as the \textit{nominal trajectory} or \textit{model trajectory}.
%
\subsection{The backward state equation}~\\
%
This section introduces solution concepts for the backward state equation of the optimal control problem and states that results from the previous section transfer. Its strong form reads 
\begin{equation}\label{eq: SWE}\tag{SWE}
	\begin{alignedat}{2}
		\partial_{tt} u + u^3 &= \Delta u + v &&\text{ in } (0,t) \times \Omega, \\
		(u(t),\partial_t u(t)) &= (\Tilde{w}_1(t),\partial_t \Tilde{w}_1(t)) + \xi ~~~&&\text{ in } \Omega.
	\end{alignedat}
\end{equation}
Unlike the model it is not considered on $[0,T]$ but on $[0,t]$, where $t \in (0,T]$ is fixed. The disturbances $v \in \mathcal{L}_t$ is fixed and the final value consists of the model trajectory at time $t$ and a fixed disturbance $\xi \in \mathcal{E}$. The equation is completed with homogeneous Dirichlet boundary conditions. The following definition of mild solutions for the backward state equation relies on the fact that $\mathcal{A}$ generates a group.
\begin{definition}\label{def: MildSolBackw}
	A function $w$ is called a mild solution to \eqref{eq: SWE} if $w \in \mathcal{C}_t$ and for all $s \in (0,t]$ it holds
	\begin{equation}\label{eq: mildSol}
		w (s)
		= e^{-\mathcal{A}(t-s)}
		\left( \Tilde{w}(t) + \xi \right)
		-
		\int_s^t e^{-\mathcal{A}(\tau-s)} \begin{bmatrix} 0 \\ -w_1^3(\tau) + v(\tau) \end{bmatrix} \, \mathrm{d}\tau.
	\end{equation}
\end{definition}
\begin{remark}\label{rem: DefSol}{}~
	To illustrate the relationship between the forward and the backward formulation and for later use we note that any solution $w$ of \eqref{eq: FWE} can be viewed as the solution of a backward problem on $(0,t)$ with final value $w_t = w(t)$, i.e.,
	\begin{equation}\label{eq: mildSoluBack}
		w (s) 
		= e^{-\mathcal{A}(t-s)} w_t 
		- \int_s^t e^{-\mathcal{A}(\tau-s)} \begin{bmatrix} 0 \\ - w_1^3(\tau) + v(\tau) \end{bmatrix} \,\mathrm{d}\tau,
		~~~\forall s \in [0,t].
	\end{equation}
\end{remark}
Weak solutions for the backward are defined entirely analogous to the forward case.
\begin{definition}
	A function $u \in L^\infty(0,t;H^1_0(\Omega))$ with $ \partial_t u \in L^\infty(0,t;L^2(\Omega))$ is called a weak solution to \eqref{eq: SWE} if
	$(u(t),\partial_t u(t)) = \Tilde{w} (t) + \xi $
	and additionally for all $\varphi \in C_0^\infty((0,t)\times \Omega)$ it holds
	\begin{equation}\label{eq: weakFormula}
		\begin{aligned}
			-\int_0^t (\partial_t u, \partial_t \varphi)_{L^2(\Omega)} \, \mathrm{d}s
			+ \int_0^t (u^3,\varphi)_{L^2(\Omega)} \, \mathrm{d}s
			= - \int_0^t (\nabla u,\nabla \varphi)_{L^2(\Omega)} \, \mathrm{d}s
			+ \int_0^t (v,\varphi)_{L^2(\Omega)} \, \mathrm{d}s.
		\end{aligned}
	\end{equation}
\end{definition}
The results presented in \Cref{rem: mildSol} (iii) and \Cref{lem: MildEqWeak} for the forward equation can be proven analogously for the backward case. Reversing the time in \eqref{eq: weakFormula} via $\overleftarrow{u}(s) = u(t-s)$ it is shown that $u$ is a weak backward solution if and only if $\overleftarrow{u}$ is a weak solution to a forward problem with initial condition $\overleftarrow{u}(0) = \Tilde{w}_1 + \xi_1$ and $\partial_t \overleftarrow{u}(0) = - \Tilde{w}_1 - \xi_1$ and the well-posedness transfers.   
\begin{corollary}\label{cor: BackwWP}
	Let $t \in (0,T]$, $\xi \in \mathcal{E}$ and $v \in \mathcal{L}_t$. Then there exists a unique weak solution $u$ to \eqref{eq: SWE}. It holds $u \in C([0,t];H^1_0(\Omega))$ and $\partial_t u \in C([0,t];L^2(\Omega))$. Further \eqref{eq: SWE} admits a unique mild solutions and it is given by $w = \begin{bmatrix} u \\ \partial_t u \end{bmatrix}$.
\end{corollary}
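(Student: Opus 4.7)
The plan is to reduce the backward problem to a forward problem on $[0,t]$ via a time reversal, then invoke the forward well-posedness from \Cref{prop: WaveWP} and \Cref{cor: WE_WP}, and finally transfer the mild--weak correspondence from the forward case.

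First I would set $\overleftarrow{u}(s) := u(t-s)$ and $\overleftarrow{v}(s) := v(t-s)$ for $s \in [0,t]$; the map $v \mapsto \overleftarrow{v}$ is an isometry on $\mathcal{L}_t$, so $\overleftarrow{v} \in \mathcal{L}_t$. Using test functions of the form $\psi(s,x) := \varphi(t-s,x) \in C_0^\infty((0,t)\times \Omega)$ and the change of variables $s \mapsto t-s$ in the weak formulation \eqref{eq: weakFormula}, the two factors of $\partial_t$ produce two sign flips that cancel, while the Laplacian term, the cubic term (since $(\overleftarrow{u})^3 = \overleftarrow{u^3}$ pointwise), and the source transform directly under pullback. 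Because $\varphi$ has compact support in $(0,t)$, no boundary contributions appear. This establishes the equivalence: $u$ is a weak backward solution of \eqref{eq: SWE} if and only if $\overleftarrow{u}$ is a weak forward solution on $[0,t]$ with initial data $(\overleftarrow{u}(0), \partial_s \overleftarrow{u}(0)) = (\Tilde{w}_1(t)+\xi_1, -(\partial_t \Tilde{w}_1(t)+\xi_2))$ and source $\overleftarrow{v}$. The initial state lies in $\mathcal{E}$ because $\Tilde{w} \in \mathcal{C}_T$ and $\xi \in \mathcal{E}$.

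With this reduction in hand, \Cref{prop: WaveWP} applied on $[0,t]$ produces a unique weak $\overleftarrow{u}$ with $\overleftarrow{u} \in C([0,t];H^1_0(\Omega))$ and $\partial_s \overleftarrow{u} \in C([0,t];L^2(\Omega))$. Reversing time back yields $u = \overleftarrow{u}(t-\cdot)$ with the same regularity, proving the weak-solution part. For the mild-solution part, I would establish the backward analogue of \Cref{lem: MildEqWeak}: a function $w = (w_1,\partial_t w_1)^\top \in \mathcal{C}_t$ is a mild solution of \eqref{eq: SWE} in the sense of \Cref{def: MildSolBackw} if and only if $w_1$ is a weak solution. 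The proof mirrors the forward case; the key additional ingredient is that $\mathcal{A}$ generates a \emph{group} by \Cref{lem: skewAdj}, so $e^{-\mathcal{A}(t-s)} \in \mathcal{L}(\mathcal{E})$ is strongly continuous for $s \in [0,t]$ and the Duhamel convolution in \eqref{eq: mildSol} is well-defined thanks to \Cref{lem: cubicInL2}. Combining this equivalence with the unique weak $u$ from the previous step gives the unique mild solution $w = (u,\partial_t u)^\top \in \mathcal{C}_t$.

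The only point requiring genuine care is the bookkeeping of signs and pullbacks in the weak-formulation equivalence under time reversal; but since $\partial_t$ appears twice in the inertia term, the two Jacobian factors $-1$ cancel, making the equivalence routine. The substantive analytical content of the corollary is thus inherited from \Cref{prop: WaveWP} via the reversal, and no machinery beyond the group property of $\mathcal{A}$ is required.
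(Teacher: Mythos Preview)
Your proposal is correct and follows essentially the same approach as the paper: time-reverse via $\overleftarrow{u}(s)=u(t-s)$, identify the resulting forward problem with data $(\Tilde{w}_1(t)+\xi_1,\,-(\partial_t\Tilde{w}_1(t)+\xi_2))$ and source $\overleftarrow{v}$, invoke \Cref{prop: WaveWP} and \Cref{cor: WE_WP}, and transfer the mild--weak equivalence analogously to \Cref{lem: MildEqWeak}. Your write-up is in fact more careful with the sign bookkeeping than the paper's sketch (which contains a minor typo in the reversed initial velocity).
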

%

%
%
\section{Optimal control problem}\label{sec: OCP}
In this section we give a rigorous formulation of the optimal control problem analyze it with respect to existence of solutions, and the first order optimality condition. 
The cost functional of the optimal control problem for the construction of the Mortensen observer for a fixed $t \in (0,T]$ is given via
\begin{equation}\label{eq: OCP}\tag{OCP}
	\begin{aligned}
		&J(\cdot,\cdot \, ;t,y) \colon \mathcal{C}_t \times \mathcal{L}_t \rightarrow \mathbb{R}\\
		&\left( w , v \right)
		\mapsto 
		\frac{1}{2} \left\Vert w(0) - w_0 \right\Vert_{\mathcal{E}}^2
		+ \frac{1}{2} \Vert v \Vert_{\mathcal{L}_t}^2 
		+ \frac{\alpha}{2} \left\Vert y - C \left( w - \Tilde{w} \right) \right\Vert_{\mathcal{Y}_t}^2,
	\end{aligned}
\end{equation}
subject to
\begin{equation*}
	e^t(w,v,\xi) = 0,
\end{equation*}
where the constraint $e^t \colon \mathcal{C}_t \times \mathcal{L}_t \times \mathcal{E} \rightarrow \mathcal{C}_t$ is defined as
\begin{equation}\label{eq: ConstrMapping}
	e^t(w,v,\xi) (s)
	= e^{-\mathcal{A}(t-s)} \left( \Tilde{w}(t) + \xi \right)
	- \int_s^t e^{-\mathcal{A}(\tau-s)} 
	\begin{bmatrix}
		0 \\ - w_1^3 + v
	\end{bmatrix}
	\, \mathrm{d} \tau 
	\, - \, w(s).
\end{equation}
For $t \in (0,T]$ we denote $\mathcal{Y}_t \coloneqq L^2(0,t;Y)$ and $y \in \mathcal{Y}_T$ is the measured output relative to the modeled output, i.e.,
\begin{equation*}
	y(s) = y_{\mathrm{abs}}(s) - C \Tilde{w}(s),
\end{equation*}
where $y_\mathrm{abs} \in \mathcal{Y_T}$ is the given absolute disturbed measurement. Finally $\xi \in \mathcal{E}$ is a fixed disturbance in the final value. For convenience we denote $c_C= \Vert C \Vert_{\mathcal{L}(\mathcal{E};Y)} = \Vert C^* \Vert_{\mathcal{L}(Y^*;\mathcal{E}^*)}$.

Note that \eqref{eq: OCP} is trivial for any $t \in (0,T]$ in case $(\xi,y) = (0,0)$. Here the minimizing pair is given by the model and $J(\Tilde{w},0;t,0) = 0$. 
\begin{remark}\label{rem: ShiftOCP}
	We stress that this formulation differs from the original exposition in \eqref{eq: Jintro}-\eqref{eq: StateIntro} in two subtle aspects. The tracking term is transformed equivalently to depend on $y$ instead of $y_\mathrm{abs}$. Further, in the original formulation $\xi$ represents the final state of the system, while here it describes its \textit{difference from the nominal trajectory}. Denoting the value function associated with \eqref{eq: OCP} by $\mathcal{V}$ and the one from the original formulation \eqref{eq: Jintro}-\eqref{eq: StateIntro} by $\mathcal{V}_\mathrm{orig}$ we have
	\begin{equation*}
		\mathcal{V}_\mathrm{orig}(t,\xi + \Tilde{w}(t),y_\mathrm{abs})
		= \mathcal{V}(t,\xi,y).
	\end{equation*}
	In a sense, the optimal control problem was shifted by $\Tilde{w}$. As a result our local analysis is performed on more convenient neighborhoods of zero, rather than neighborhoods of the nominal trajectory and its associated output. 
\end{remark}
%
\subsection{The data-to-state operator}~\\
%
Based on \Cref{cor: BackwWP} we define the data-to-state operator and proceed to show smoothness and uniform boundedness of said operator.
\begin{definition}
	The control-to-state operator 
	\begin{equation*}
		\mathcal{S}^t \colon \mathcal{E} \times \mathcal{L}_t \longrightarrow \mathcal{C}_t
	\end{equation*}
	maps $(v,\xi)$ to the solution $w$ of \eqref{eq: SWE} corresponding to $\xi$ and $v$.
\end{definition}
In the following we discuss the differentiability of $\mathcal{S}^t$ and characterize its derivatives using the implicit function theorem. For the benefit of readability here we abbreviate $w = \mathcal{S}^t(v,\xi)$. Note that even though we only give explicit formulas for the partial derivatives, formulas for the full derivative follow with identities of the form $D \mathcal{S}^t(v,\xi) [(u,\eta)] = D_v \mathcal{S}^t(v,\xi) [u] + D_\xi \mathcal{S}^t(v,\xi) [\eta]$.

\begin{lemma}\label{lem: SmoothS}
	The data-to-state operator is of class $C^\infty$ on its domain and the partial derivatives can be characterized as unique solutions of linear problems.
	Let $\xi$, $\eta$, $\psi \in \mathcal{E}$ and $v$, $u$, $h \in \mathcal{L}_t$.\\
	Then $D_v \mathcal{S}^t(v,\xi) [u]$ solves
	\begin{equation}\label{eq: Sder1}
		z(s)
		= 
		-\int_s^t e^{-\mathcal{A}(\tau-s)} \begin{bmatrix} 0 \\ - 3 w_1^2(\tau) z_1(\tau) + u(\tau) \end{bmatrix} \, \mathrm{d} \tau ~~~\forall s \in [0,t]
	\end{equation}
	and $D_\xi \mathcal{S}^t(v,\xi) [\eta]$ solves
	\begin{equation}\label{eq: Sder2}
		z(s)
		= 
		e^{-\mathcal{A}(t-s)} \eta
		-
		\int_s^t e^{-\mathcal{A}(\tau-s)} \begin{bmatrix} 0 \\- 3 	w_1^2(\tau) z_1(\tau) \end{bmatrix} \, \mathrm{d} \tau~~~\forall s \in [0,t].
	\end{equation}
	For partial derivatives of second order consider the formula 
	\begin{equation}\label{eq: Sder3}
		z(s)
		= 
		-\int_s^t e^{-\mathcal{A}(\tau-s)} \begin{bmatrix} 0 \\ 
			-3 w_1^2(\tau) z_1(\tau) 
			-6 w_1(\tau) \, p_1(\tau) \, q_1(\tau) \end{bmatrix} \, \mathrm{d} \tau.
	\end{equation}
	Then $D_{vv}^2 \mathcal{S}^t(v,\xi)[u,h]$ solves \eqref{eq: Sder3} if we set $p = D_v \mathcal{S}^t(v,\xi)[h] $ and $q = D_v \mathcal{S}^t(v,\xi)[u]$, $D_{\xi\xi}^2 \mathcal{S}^t(v,\xi)[\eta,\psi]$ solves \eqref{eq: Sder3} with $p = D_\xi \mathcal{S}^t(v,\xi)[\psi]$ and $q = D_\xi \mathcal{S}^t(v,\xi)[\eta]$, and finally $D_{\xi v}^2 \mathcal{S}^t(v,\xi)[u,\eta]$ solves \eqref{eq: Sder3} with $p = D_v \mathcal{S}^t(v,\xi) [u]$ and $q = D_\xi \mathcal{S}^t(v,\xi)[\eta]$.
\end{lemma}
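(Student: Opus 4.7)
The plan is to apply the implicit function theorem to the constraint map $e^t$ defined in \eqref{eq: ConstrMapping}, which implicitly characterises $\mathcal{S}^t$ via $e^t(\mathcal{S}^t(v,\xi), v, \xi) = 0$. To this end I first verify that $e^t \in C^\infty(\mathcal{C}_t \times \mathcal{L}_t \times \mathcal{E}; \mathcal{C}_t)$. Since the dependence on $(v,\xi)$ is affine, only the cubic term $w \mapsto w_1^3$ needs attention. By \Cref{lem: cubicInL2} this is a continuous polynomial of degree three from $\mathcal{C}_t$ into $C([0,t]; L^2(\Omega))$ whose non-trivial Fr\'echet derivatives at $w$ read $z \mapsto 3 w_1^2 z_1$, $(z,p) \mapsto 6 w_1 z_1 p_1$, and $(z,p,q) \mapsto 6 z_1 p_1 q_1$, with all higher-order derivatives vanishing. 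Post-composition with the linear Volterra operator $f \mapsto \int_s^t e^{-\mathcal{A}(\tau-s)} [0,\, f(\tau)]^\top d\tau$, which is bounded from $C([0,t]; L^2(\Omega))$ into $\mathcal{C}_t$ thanks to \Cref{lem: skewAdj}, preserves this smoothness.

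The heart of the argument is to show that
\begin{equation*}
D_w e^t(w,v,\xi)[z](s) = -z(s) + \int_s^t e^{-\mathcal{A}(\tau-s)} \begin{bmatrix} 0 \\ 3 w_1^2(\tau) z_1(\tau) \end{bmatrix} d\tau
\end{equation*}
is an isomorphism of $\mathcal{C}_t$. For $f \in \mathcal{C}_t$, the equation $D_w e^t(w,v,\xi)[z] = f$ is a linear backward Volterra problem with the time-dependent coefficient $3 w_1^2$. Combining \Cref{lem: cubicInL2} with the unitarity of $e^{-\mathcal{A}(\cdot)}$ from \Cref{lem: skewAdj} and the uniform bound $\sup_{\tau \in [0,t]} \|w_1(\tau)\|_{H^1_0(\Omega)} < \infty$, a standard contraction argument on a sufficiently short subinterval followed by concatenation -- or, equivalently, a Gronwall estimate -- yields a unique solution $z \in \mathcal{C}_t$ with the continuity bound $\|z\|_{\mathcal{C}_t} \leq c\|f\|_{\mathcal{C}_t}$.

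With these two ingredients in place, the implicit function theorem delivers the claimed $C^\infty$-regularity of $\mathcal{S}^t$ together with the representation $D\mathcal{S}^t = -[D_w e^t]^{-1} D_{(v,\xi)} e^t$. The first-order computations
\begin{equation*}
D_v e^t(w,v,\xi)[u](s) = -\int_s^t e^{-\mathcal{A}(\tau-s)} \begin{bmatrix} 0 \\ u(\tau) \end{bmatrix} d\tau, \qquad D_\xi e^t(w,v,\xi)[\eta](s) = e^{-\mathcal{A}(t-s)} \eta,
\end{equation*}
substituted into this identity and rearranged reproduce \eqref{eq: Sder1} and \eqref{eq: Sder2}. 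For the second-order partials, differentiating $e^t(\mathcal{S}^t(v,\xi), v, \xi) = 0$ twice and exploiting the affinity of $e^t$ in $(v,\xi)$ -- which kills all second-order partials of $e^t$ other than $D_{ww}^2 e^t(w,v,\xi)[p,q](s) = \int_s^t e^{-\mathcal{A}(\tau-s)} \begin{bmatrix} 0 \\ 6 w_1 p_1 q_1 \end{bmatrix} d\tau$ -- reduces each case to a linear equation of exactly the form \eqref{eq: Sder3} driven by the appropriate pair $(p,q)$ of first-order partials.

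The main obstacle is the invertibility of $D_w e^t$: the coefficient $3 w_1^2$ enters the linearised equation with only $\mathcal{C}_t$-regularity, and it is precisely \Cref{lem: cubicInL2} -- which in turn rests on the embedding $H^1_0(\Omega) \hookrightarrow L^6(\Omega)$ available only because $\dim\Omega = 3$ -- that turns $w_1^2$ into a bounded multiplier from $H^1_0(\Omega)$ into $L^2(\Omega)$ with the clean product estimate needed for the Gronwall step. Everything else reduces to polynomial calculus on trilinear maps and to linear semigroup bookkeeping.
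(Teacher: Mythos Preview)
Your proposal is correct and follows essentially the same route as the paper: apply the implicit function theorem to $e^t$, establish $C^\infty$-smoothness of the cubic term, prove invertibility of $D_w e^t$ via a contraction argument on short subintervals followed by concatenation, and then read off the derivative formulas. The only cosmetic difference is that the paper justifies smoothness of the Nemytskii operator $w\mapsto w^3$ by citing a general result on superposition operators between $L^\infty(0,t;L^6)$ and $L^\infty(0,t;L^2)$, whereas you argue directly that it is a bounded trilinear map (hence a polynomial, hence $C^\infty$) via \Cref{lem: cubicInL2}; both arguments are equivalent here.
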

\begin{proof}
	We apply the implicit function theorem to the mapping $e^t$.
	By an application of \cite[Thm.~7]{GoldKT92} it can be shown that the Nemytskii operator induced by $w \mapsto w^3$ mapping $L^\infty(0,t;L^6(\Omega))$ to $L^\infty(0,t;L^2(\Omega))$ is of class $C^\infty$. All remaining terms are (affine) linear bounded operators. Therefore $e^t$ as defined in \eqref{eq: ConstrMapping} is of class $C^\infty$.
	To show the required bijectivity of the partial derivative note that for $z,f \in \mathcal{C}_t$ it holds
	%
	$D_w e^t(\mathcal{S}^t(v,\xi),v,\xi) [z] = f$
	%
	if and only if for all $s \in [0,t]$ it holds
	\begin{equation}\label{eq: DwInvertible}
		z(s)
		=
		\int_s^t e^{-\mathcal{A}(\tau-s)} 
		\begin{bmatrix}
			0 \\
			3 \mathcal{S}(v,\xi)_1^2 z_1
		\end{bmatrix}
		\, \mathrm{d} \tau
		-f(s).
	\end{equation}
	We now show that for any given $f \in \mathcal{C}_t$ there exists exactly one $z \in \mathcal{C}_t$ satisfying \eqref{eq: DwInvertible}. To that end we adjust the technique applied in \cite[Thm.~4.3]{KunM20} and choose a partition $0 = t_0 < t_1 < \cdots < t_n = t$ such that 
	\begin{equation*}
		3 \, (t_{i+1} - t_i) \, c_{\mathrm{em}}^3 \, ~ \Vert \mathcal{S}^t(v,\xi) \Vert_{\mathcal{C}_t}^2 < \tfrac{1}{2}~~~\text{for all} ~ i = 0, \ldots,n-1,
	\end{equation*}
	where $c_\mathrm{em}$ is the constant associated with $H^1_0(\Omega) \hookrightarrow L^6(\Omega)$. For $i = 0, \ldots, n-1$ let $\xi_i \in \mathcal{E}$ be fixed and define the operators 
	\begin{equation*}
		\mathcal{T}_i \colon C([t_i,t_{i+1}];\mathcal{E}) \rightarrow C([t_i,t_{i+1}];\mathcal{E})
	\end{equation*}
	via $\mathcal{T}_i \, h = z$, where for $s \in [t_i,t_{i+1}]$
	\begin{equation*}
		z(s) = e^{-\mathcal{A}(t_{i+1}-s)} \xi_i
		+ \int_s^{t_{i+1}} e^{-\mathcal{A}(s-\tau)} \begin{bmatrix} 0 \\ 3 \mathcal{S}^t(v,\xi)_1^2(\tau) \, h_1(\tau) \end{bmatrix} \, \mathrm{d} \tau - f(s).
	\end{equation*}
	For $h_1,h_2 \in C([t_i,t_{i+1}];\mathcal{E})$ it holds
	\begin{equation*}
		\Vert \mathcal{T}_i h_1 - \mathcal{T}_i h_2 \Vert_{C([t_i,t_{i+1}];\mathcal{E})}
		\leq 3 c_\mathrm{em}^3 (t_{i+1}-t_i) \Vert \mathcal{S}^t(v,\xi) \Vert_{C([t_i,t_{i+1}]; \mathcal{E})}^2
		\Vert h_1 - h_2 \Vert_{C([t_i,t_{i+1}]; \mathcal{E})},
	\end{equation*}
	showing that $\mathcal{T}_i$ is a contraction. Banach's fixed point theorem implies existence of a unique fixed point $z_i \in C([t_i,t_{i+1}];\mathcal{E})$ depending on $\xi_i$. Iteratively setting $\xi_{n-1} = 0$ and $\xi_{n-1-i} = z_{n-i}(t_{n-i})$ for $i = 0,\ldots,n-2$ yields functions $z_i \in C([t_i,t_{i+1}];\mathcal{E})$, $i = 0,\ldots,n-1$ that can be glued together to obtain a solution of \eqref{eq: DwInvertible}. The uniqueness of the fixed points implies uniqueness of the solution to \eqref{eq: DwInvertible}.
	
	The implicit function theorem now yields the asserted regularity of $\mathcal{S}^t$ and equations \eqref{eq: Sder1} and \eqref{eq: Sder2}. Taking appropriate derivatives of \eqref{eq: Sder1} and \eqref{eq: Sder2} yields formulas for the second order partial derivatives. Due to the regularity of $w$ unique solvability of the linear equations \eqref{eq: Sder1} - \eqref{eq: Sder3} follows by reversing time and considering \cite[II Prop.~3.4]{BenEtAl07}.
\end{proof}
In the following we show appropriate estimates for the solution $\mathcal{S}^t(v,\xi)$.  Our local analysis, however, requires an estimate for the solution's distance to the nominal trajectory. Such bounds are obtained by application of a fixed point argument and only hold for sufficiently small $\xi$ and $v$. More specifically we show that the unique solution of the equation satisfied by $\mathcal{S}^t(v,\xi) - \Tilde{w}$ lies in a certain neighborhood of zero.
First we analyze the linear part of said equation using the definitions and techniques presented in \cite[II Sec.~3.4]{BenEtAl07}. 
\begin{lemma}\label{lem: estLinEq}
	Let $\Tilde{w}$ be the nominal trajectory and let $t \in (0,T]$, $f \in \mathcal{L}_t$ and $\xi \in \mathcal{E}$. Then there exists exactly one solution $z \in L^2(0,t;\mathcal{E})$ to 
	\begin{equation}\label{eq: linEq}
		\begin{aligned}
			z (s) 
			= e^{-\mathcal{A}(t-s)} \xi
			- \int_s^t e^{-\mathcal{A}(\tau-s)} \begin{bmatrix} 0 \\ - \Tilde{w}_1^2(\tau) z_1(\tau) + f(\tau) \end{bmatrix} \, \mathrm{d} \tau~~~s \in [0,t]. 
		\end{aligned}
	\end{equation}
	It holds $z \in \mathcal{C}_t$ and there exists $c_w > 0$ depending on $T$ but independent of $f$, $\xi$ and $t$ such that
	\begin{equation*}
		\left\Vert z \right\Vert_{\mathcal{C}_t}
		\leq
		c_w \left( \Vert \xi \Vert_\mathcal{E} + \Vert f \Vert_{\mathcal{L}_t}  \right).
	\end{equation*}
\end{lemma}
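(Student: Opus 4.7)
Equation \eqref{eq: linEq} is an affine fixed point equation of the same structural form as the one treated in the proof of \Cref{lem: SmoothS}, and my plan is to combine that partitioning strategy with a reverse-time Gr\"onwall argument. First I would verify that the right hand side defines a map $\mathcal{T} \colon \mathcal{C}_t \to \mathcal{C}_t$; the only nontrivial ingredient is the pointwise estimate
\[
\|\Tilde{w}_1^2(\tau) \, z_1(\tau)\|_{L^2(\Omega)} \leq c_\mathrm{em}^3 \, \|\Tilde{w}_1(\tau)\|_{H^1_0(\Omega)}^2 \, \|z_1(\tau)\|_{H^1_0(\Omega)},
\]
obtained by H\"older's inequality and the embedding $H^1_0(\Omega) \hookrightarrow L^6(\Omega)$. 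This guarantees that the integrand in \eqref{eq: linEq} lies in $\mathcal{L}_t$ whenever $z \in \mathcal{C}_t$. Together with the unitarity of $e^{\mathcal{A}\cdot}$ from \Cref{lem: skewAdj}, this shows that $\mathcal{T}$ maps $\mathcal{C}_t$ into itself.

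For existence and uniqueness of a fixed point in $\mathcal{C}_t$, I would deploy the partitioning argument from the proof of \Cref{lem: SmoothS}: choose a partition $0 = t_0 < t_1 < \cdots < t_n = t$ satisfying $(t_{i+1}-t_i) \, c_\mathrm{em}^3 \, \|\Tilde{w}\|_{\mathcal{C}_T}^2 < \tfrac{1}{2}$ for all $i$, so that on each subinterval $\mathcal{T}$ becomes a strict contraction. Banach's fixed point theorem on each piece, glued together starting from the prescribed terminal condition $z(t) = \xi$, yields a unique $z \in \mathcal{C}_t$ satisfying \eqref{eq: linEq}. Uniqueness in the larger space $L^2(0,t;\mathcal{E})$ then follows by observing that the identity \eqref{eq: linEq} forces any $L^2$-solution to admit a continuous representative, reducing the question to the $\mathcal{C}_t$-case already handled.

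The quantitative bound is the more delicate part, since $c_w$ must be independent of $t$. To achieve this I would not use the partition but rather work directly with the identity \eqref{eq: linEq}, estimating
\[
\|z(s)\|_\mathcal{E} \leq \|\xi\|_\mathcal{E} + \sqrt{T} \, \|f\|_{\mathcal{L}_t} + c_\mathrm{em}^3 \, \|\Tilde{w}\|_{\mathcal{C}_T}^2 \int_s^t \|z(\tau)\|_\mathcal{E} \, \mathrm{d} \tau,
\]
where the $\sqrt{T}$ factor arises from a Cauchy-Schwarz bound on $\int_s^t \|f(\tau)\|_{L^2(\Omega)}\,\mathrm{d}\tau$ and the remaining term uses the displayed embedding estimate bound by $\|\Tilde{w}\|_{\mathcal{C}_T}^2 \|z(\tau)\|_\mathcal{E}$. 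A reverse-time Gr\"onwall inequality then yields $\|z(s)\|_\mathcal{E} \leq (\|\xi\|_\mathcal{E} + \sqrt{T}\|f\|_{\mathcal{L}_t}) \exp\bigl(c_\mathrm{em}^3 \|\Tilde{w}\|_{\mathcal{C}_T}^2 T\bigr)$ uniformly in $s \in [0,t]$, so that $c_w \coloneqq \max(1,\sqrt{T}) \exp(c_\mathrm{em}^3 \|\Tilde{w}\|_{\mathcal{C}_T}^2 T)$ satisfies the claim.

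I expect the main obstacle to be reconciling the $t$-dependent partition used in the existence step with the $t$-uniform bound required for the estimate. The Gr\"onwall argument is the decisive tool for bypassing this: it exploits the global continuity $\Tilde{w} \in \mathcal{C}_T$ to replace the finitely many local contraction constants by a single global integral inequality whose Gr\"onwall closure depends on $T$ and $\|\Tilde{w}\|_{\mathcal{C}_T}$ only.
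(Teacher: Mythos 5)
Your proposal is correct, and the quantitative part (Gr\"onwall applied directly to \eqref{eq: linEq} after bounding the cubic term via \Cref{lem: cubicInL2} and Cauchy--Schwarz on the $f$-integral) is exactly what the paper does to obtain the $t$-independent constant $c_w$. Where you diverge is the existence/uniqueness step: the paper reverses time, rewrites \eqref{eq: linEq} as a forward integral equation with the bounded, strongly continuous multiplication operator $F(s)$ built from $\Tilde{w}_1^2(s)$, and then simply cites the abstract well-posedness result \cite[II Prop.~3.4]{BenEtAl07} for linear inhomogeneous Cauchy problems with time-dependent bounded perturbations. You instead rerun the partition-and-contraction argument from the proof of \Cref{lem: SmoothS}, which is more self-contained and avoids the external reference, at the cost of having to separately argue (as you correctly do) that any $L^2(0,t;\mathcal{E})$-solution admits a continuous representative so that uniqueness in $\mathcal{C}_t$ upgrades to uniqueness in the class stated in the lemma; the paper gets continuity and uniqueness in one stroke from the cited proposition. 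Your observation that the $t$-dependent partition is irrelevant to the uniform bound because the Gr\"onwall closure only sees $T$ and $\Vert \Tilde{w} \Vert_{\mathcal{C}_T}$ is precisely the right resolution of that tension. One cosmetic remark: the paper's proof carries a factor $3$ in the definition of $F(s)$ that is absent from the coefficient in \eqref{eq: linEq} as stated (an inconsequential inconsistency in the source); your constants track the equation as written, which is fine.
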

\begin{proof}
	A time transformation unveils that $z$ being a solution to \eqref{eq: linEq} is equivalent to $s \mapsto \overleftarrow{z}(s) \coloneqq z(t-s)$ being a solution to
	\begin{equation*}
		\overleftarrow{z}(s) = e^{-\mathcal{A}s} \xi 
		- \int_{0}^s e^{-\mathcal{A}(s-\tau)} \left( - F(\tau) \overleftarrow{z}(\tau) + \begin{bmatrix} 0 \\ f(t-\tau) \end{bmatrix} \right) \, \mathrm{d}\tau, 
	\end{equation*}
	where for $s \in [0,T]$ the operator $F(s) \colon \mathcal{E} \rightarrow \mathcal{E}$ is defined via
	\begin{equation}\label{eq: DefF}
		F(s)\begin{bmatrix} w_1 \\ w_2 \end{bmatrix} = \begin{bmatrix} 0 \\ 3 \Tilde{w}_1^2(s) \, w_1 \end{bmatrix}.
	\end{equation}
	Since $\Tilde{w} \in \mathcal{C}_T$, we have $F(s) \in \mathcal{L}(\mathcal{E})$ for every $s \in [0,T]$ and further $s \mapsto F(s)x$ is continuous from $[0,T]$ to $\mathcal{E}$ for every $x \in \mathcal{E}$. The existence of a unique solution then follows from \cite[II Prop.~3.4]{BenEtAl07}.
	The regularity of $\Tilde{w}$ yields uniform boundedness of $\Vert  F(s) \Vert_{\mathcal{L}(\mathcal{E};\mathcal{E})}$ in $s\in [0,T]$. Standard estimates of \eqref{eq: linEq} together with Gronwall's inequality yield the estimate for the solution.
\end{proof}
With this tool at hand the fixed point argument can be realized.
\begin{proposition}\label{prop: SolStateEq}
	There exists $\delta_1 > 0$ independent of $t$ such that for any $\xi \in \mathcal{E}$ and $v \in \mathcal{L}_t$ satisfying
	$\Vert \xi \Vert_\mathcal{E} + \Vert v \Vert_{\mathcal{L}_t} < \delta_1$
	it holds
	\begin{equation*}
		\left\Vert \mathcal{S}^t(v,\xi) - \Tilde{w} \right\Vert_{\mathcal{C}_t} 
		\leq 2 c_w \left( \Vert \xi \Vert_\mathcal{E} + \Vert v \Vert_{\mathcal{L}_t} \right),
	\end{equation*}
	where $c_w$ is the constant from \Cref{lem: estLinEq}.
\end{proposition}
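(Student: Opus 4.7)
The strategy is a Banach fixed-point argument on a closed ball in $\mathcal{C}_t$ of radius proportional to $\Vert\xi\Vert_\mathcal{E} + \Vert v\Vert_{\mathcal{L}_t}$. Write $w := \mathcal{S}^t(v,\xi)$, set $z := w - \Tilde{w}$, and subtract the mild representation \eqref{eq: mildSol} of $w$ from the analogous formulation for $\Tilde{w}$ with final value $\Tilde{w}(t)$ (see \Cref{rem: DefSol}). Expanding
\begin{equation*}
(z_1 + \Tilde{w}_1)^3 - \Tilde{w}_1^3 = z_1^3 + 3 z_1^2 \Tilde{w}_1 + 3 \Tilde{w}_1^2 z_1
\end{equation*}
and isolating the linear-in-$z$ contribution $3 \Tilde{w}_1^2 z_1$ under the integral on the left-hand side recasts the equation for $z$ as an instance of the linear problem of \Cref{lem: estLinEq}, with data $\xi$ and $z$-dependent source
\begin{equation*}
f_z(\tau) := v(\tau) - z_1^3(\tau) - 3 \Tilde{w}_1(\tau) z_1^2(\tau),
\end{equation*}
so that only genuinely super-linear contributions of $z$ remain alongside the datum $v$.

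I would then define $\Phi \colon \mathcal{C}_t \to \mathcal{C}_t$ by letting $\Phi(\tilde z)$ be the unique solution from \Cref{lem: estLinEq} driven by $f_{\tilde z}$. Any fixed point of $\Phi$, after adding $\Tilde{w}$, is a mild solution to \eqref{eq: SWE}, so by the uniqueness assertion of \Cref{cor: BackwWP} it must coincide with $\mathcal{S}^t(v,\xi) - \Tilde{w}$. Setting $R := 2 c_w(\Vert\xi\Vert_\mathcal{E} + \Vert v\Vert_{\mathcal{L}_t})$ and $B_R := \{\tilde z \in \mathcal{C}_t : \Vert \tilde z\Vert_{\mathcal{C}_t} \leq R\}$, the plan is to apply Banach's theorem to $\Phi|_{B_R}$, so that the resulting fixed-point bound yields precisely the claimed estimate.

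The two required ingredients both rest on \Cref{lem: cubicInL2}. For $\tilde z \in B_R$, bounding each $H^1_0$-factor by its $L^\infty$-in-time norm and then integrating over $[0,t]$ gives $\Vert f_{\tilde z}\Vert_{\mathcal{L}_t} \leq \Vert v\Vert_{\mathcal{L}_t} + c_\mathrm{em}^3 T^{1/2}\bigl(R^3 + 3\Vert\Tilde{w}\Vert_{\mathcal{C}_T} R^2\bigr)$; combined with \Cref{lem: estLinEq} and the identity $c_w(\Vert\xi\Vert_\mathcal{E} + \Vert v\Vert_{\mathcal{L}_t}) = R/2$ this yields $\Phi(B_R) \subseteq B_R$ as soon as the super-linear remainder is at most $R/2$. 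Factoring $a^3 - b^3 = (a-b)(a^2 + ab + b^2)$ and $a^2 - b^2 = (a-b)(a+b)$ and re-applying \Cref{lem: cubicInL2} produces
\begin{equation*}
\Vert f_{\tilde z_1} - f_{\tilde z_2}\Vert_{\mathcal{L}_t} \leq c_\mathrm{em}^3 T^{1/2}\bigl(3R^2 + 6\Vert\Tilde{w}\Vert_{\mathcal{C}_T} R\bigr)\Vert \tilde z_1 - \tilde z_2\Vert_{\mathcal{C}_t},
\end{equation*}
which by \Cref{lem: estLinEq} becomes a contraction for $R$ sufficiently small. Choosing $\delta_1$ so that both smallness conditions on $R = 2 c_w \delta_1$ are satisfied closes the argument, and since $c_w$, $c_\mathrm{em}$, $T$ and $\Vert\Tilde{w}\Vert_{\mathcal{C}_T}$ are all independent of $t \in (0,T]$, so is $\delta_1$. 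The main obstacle is the linear-in-$z$ term $3 \Tilde{w}_1^2 z_1$, which cannot be controlled by smallness of $R$ alone and would derail a naive Picard iteration directly on the nonlinear equation; this is precisely why it is absorbed into the linear problem of \Cref{lem: estLinEq}, whose constant $c_w$ packages the corresponding Gronwall-type growth into a single bound uniform in $t$.
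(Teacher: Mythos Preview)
Your proposal is correct and follows essentially the same route as the paper: both set up a Banach fixed-point argument on the ball of radius $2c_w(\Vert\xi\Vert_\mathcal{E}+\Vert v\Vert_{\mathcal{L}_t})$, absorb the linear term $3\Tilde w_1^2 z_1$ into the linear problem of \Cref{lem: estLinEq}, and use \Cref{lem: cubicInL2} together with cubic factorisations to verify both the self-mapping and contraction conditions. Your explicit invocation of the uniqueness in \Cref{cor: BackwWP} to identify the fixed point with $\mathcal{S}^t(v,\xi)-\Tilde w$ is a clean way to close the argument; the paper instead verifies directly that the fixed point plus $\Tilde w$ satisfies the mild formulation.
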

\begin{proof}
	We define the set
	\begin{equation*}
		\mathcal{M} = \left\{
		w \in \mathcal{C}_t \colon  
		\left\Vert w \right\Vert_{\mathcal{C}_t} 
		\leq 2 c_w \left( \Vert \xi \Vert_\mathcal{E}
		+ \Vert v \Vert_{\mathcal{L}_t} \right) \right\}
	\end{equation*}
	and the operator
	\begin{equation*}
		\mathcal{Z} \colon \mathcal{M} \rightarrow \mathcal{C}_t,~~~ w = \begin{bmatrix} w_1 \\ w_2 \end{bmatrix} \mapsto z,
	\end{equation*}
	where $z$ is the unique solution of \eqref{eq: linEq} with $f = -w_1^3 - 3 \Tilde{w}_1 w_1^2 + v$. Note that by \Cref{lem: cubicInL2} it holds $f \in \mathcal{L}_t$. Now any fixed point $\bar{w} = \begin{bmatrix} \bar{w}_1 \\ \bar{w}_2 \end{bmatrix} \in \mathcal{M}$ of $\mathcal{Z}$ satisfies for $s\in [0,t]$
	\begin{equation*}
		\begin{bmatrix} \bar{w}_1 \\ \bar{w}_2 \end{bmatrix} (s)
		= e^{-\mathcal{A}(t-s)} \xi 
		- \int_s^t e^{-\mathcal{A}(\tau-s)} 
		\begin{bmatrix} 0 \\ - \bar{w}_1^3 - 3 \Tilde{w}_1 \bar{w}_1^2 - 3 \Tilde{w}_1^2 \bar{w}_1 + v \end{bmatrix} \,\mathrm{d}\tau.
	\end{equation*}
	The analogue of \eqref{eq: mildSoluBack} for $\Tilde{w}$ implies
	\begin{equation*}
		\Tilde{w}(s) + \bar{w} (s)
		=
		e^{-\mathcal{A}(t-s)} \left( \Tilde{w}(t) + \xi \right)
		-
		\int_s^t e^{-\mathcal{A}(\tau-s)} \begin{bmatrix} 0 \\ -(\Tilde{w}_1 - \bar{w}_1)^3 + v \end{bmatrix} \, \mathrm{d}\tau
	\end{equation*}
	and hence $\Tilde{w} + \bar{w} = \mathcal{S}^t(v,\xi)$.
	The estimate holds due to the fact that $\mathcal{S}^t(v,\xi) - \Tilde{w} = \bar{w} \in \mathcal{M}$.
	
	It remains to show that $\mathcal{Z}$ is a contraction on $\mathcal{M}$. Then Banach's fixed point theorem yields existence of a fixed point. 
	We first show $\mathcal{Z}(\mathcal{M}) \subset \mathcal{M}$. To that end note that for any $w = \begin{bmatrix} w_1 \\ w_2 \end{bmatrix} \in \mathcal{M}$ it holds
	\begin{equation*}
		\Vert w \Vert_{\mathcal{C}_t} \leq 2 c_w \left( \Vert \xi \Vert_\mathcal{E}
		+ \Vert v \Vert_{\mathcal{L}_t} \right)
		< 2 c_w \delta_1. 
	\end{equation*}
	With \Cref{lem: cubicInL2} it follows
	\begin{equation*}
		\begin{aligned}
			\Vert w_1^3 + 3\Tilde{w}_1 w_1^2 \Vert_{\mathcal{L}_t}
			&\leq \sqrt{2 T c_\mathrm{em}^3} \left( \Vert w \Vert_{\mathcal{C}_t}^2 + 3 \left\Vert \Tilde{w} \right\Vert_{\mathcal{C}_t} \Vert w \Vert_{\mathcal{C}_t} \right)
			\Vert w \Vert_{\mathcal{C}_t}\\
			&\leq \sqrt{2 T c_\mathrm{em}^3} \left( 4 c_w^2 \delta_1^2 + 6 c_w \Vert \Tilde{w} \Vert_{\mathcal{C}_t} \delta_1 \right) 2 c_w \left( \Vert \xi \Vert_\mathcal{E}
			+ \Vert v \Vert_{\mathcal{L}_t} \right)
		\end{aligned}
	\end{equation*}
	and for a sufficiently small $\delta_1$ one obtains $\sqrt{2 T c_\mathrm{em}^3} \left( 4 c_w^2 \delta_1^2 + 6 c_w \Vert \Tilde{w} \Vert_{\mathcal{C}_t} \delta_1 \right) 2 c_w < 1$. Utilizing \Cref{lem: estLinEq} it follows
	\begin{equation*}
		\begin{aligned}
			\Vert \mathcal{Z}(w) \Vert_{\mathcal{C}_t}
			&\leq c_w \left( \Vert \xi \Vert_\mathcal{E}
			+ \Vert -w_1^3 - 3 \Tilde{w}_1 w_1^2 + v \Vert_{\mathcal{L}_t} \right)\\
			&\leq c_w \left( \Vert \xi \Vert_\mathcal{E} 
			+ \Vert v \Vert_{\mathcal{L}_t}
			+ \Vert w_1^3 + 3 \Tilde{w}_1 w_1^2 \Vert_{\mathcal{L}_t} 
			\right)
			\leq 2 c_w \left( \Vert \xi \Vert_\mathcal{E} 
			+ \Vert v \Vert_{\mathcal{L}_t} \right),
		\end{aligned}
	\end{equation*}
	implying $\mathcal{Z}(w) \in \mathcal{M}$.
	
	To show the Lipschitz continuity of $\mathcal{Z}$ assume $w$,$q \in \mathcal{M}$. Then $z = \mathcal{Z}(q) - \mathcal{Z}(w)$ satisfies \eqref{eq: linEq} with $\xi = 0$ and $f = -(q_1^3 - w_1^3) - 3 \Tilde{w}_1 (q_1^2 - w_1^2)$. Hence
	\begin{equation}\label{eq: FPL1}
		\Vert \mathcal{Z}(q) - \mathcal{Z}(w) \Vert_{\mathcal{C}_t}
		\leq c_w \left( \Vert q_1^3 - w_1^3 \Vert_{\mathcal{L}_t} + 3 \Vert \Tilde{w}_1 (q_1^2 - w_1^2) \Vert_{\mathcal{L}_t} \right). 	
	\end{equation}
	Using the identities
	\begin{equation}\label{eq: cubId}
		\begin{aligned}
			q_1^3 - w_1^3 &= (q_1 - w_1)^3 + 3 w_1 (q_1 - w_1)^2 + 3 q_1^2 (w_1 - q_1),\\
			q_1 - w_1^2 &= (q_1 + w_1) (q-1 - w_1), 
		\end{aligned}
	\end{equation}
	and the estimate implied by $q_1,w_1 \in \mathcal{M}$ it is straightforward to show that for a sufficiently small $\delta_1$ the operator $\mathcal{Z}$ is a contraction.
\end{proof}

We are now in a position to estimate the operator norms of the derivatives of $\mathcal{S}^t$ independently of $t$. The following lemma will be needed for the sensitivity analysis of the value function.
\begin{lemma}\label{lem: EstS}
	Let $t \in (0,T]$ be fixed and $\Vert (\xi,v) \Vert_{\mathcal{E} \times \mathcal{L}_t} < \tfrac{1}{2} \delta_1.$
	For $i = 1,...,5$ let $u_i \in \mathcal{L}_t$ and $\eta_i \in \mathcal{E}$.
	Then there exists $c_{\mathcal{S}} > 0$ independent of $t$ such that
	\begin{equation*}
		\Vert \mathcal{S}^t(v,\xi) \Vert_{\mathcal{C}_t}
		\leq c_{\mathcal{S}}
	\end{equation*}
	and for $i,j \in \{ 0,1,...,5 \}$ such that $i+j \leq 5$ it holds
	\begin{equation*}
		\Vert D_{v^i \xi^j}^{i+j} \mathcal{S}^t(v,\xi) [\eta_1,...,\eta_j,u_1,...,u_i] \Vert_{\mathcal{C}_t}
		\leq c_{\mathcal{S}} \left( \prod_{k=1}^i \Vert u_k \Vert_{\mathcal{L}_t} \right)
		\left( \prod_{k=1}^j \Vert \eta_k \Vert_{\mathcal{E}} \right).
	\end{equation*}
\end{lemma}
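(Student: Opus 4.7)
The plan is to proceed in two stages: I first upgrade \Cref{lem: estLinEq} so that it also covers linear equations whose zeroth-order coefficient is the linearization at $w = \mathcal{S}^t(v,\xi)$ rather than at the nominal trajectory $\Tilde{w}$, and then I induct on the order of differentiation.

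\textbf{Step 1 (uniform $L^\infty$ bound and extended linear estimate).}
The bound on $\mathcal{S}^t(v,\xi)$ itself follows directly from \Cref{prop: SolStateEq}: under the smallness assumption I have
\begin{equation*}
\Vert \mathcal{S}^t(v,\xi) \Vert_{\mathcal{C}_t}
\leq \Vert \Tilde{w} \Vert_{\mathcal{C}_T} + 2 c_w \bigl( \Vert \xi \Vert_\mathcal{E} + \Vert v \Vert_{\mathcal{L}_t} \bigr)
\leq \Vert \Tilde{w} \Vert_{\mathcal{C}_T} + c_w \delta_1 =: M_0,
\end{equation*}
which is independent of $t$ and furnishes the constant claimed for the zeroth-order bound. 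Next, I observe that the proof of \Cref{lem: estLinEq} extends verbatim when $3 \Tilde{w}_1^2$ is replaced by $3 w_1^2$ with $w = \mathcal{S}^t(v,\xi)$: the induced operator $F(s)$ as in \eqref{eq: DefF} remains uniformly bounded on $\mathcal{E}$ with norm dominated by $3 c_\mathrm{em}^3 M_0^2$ (by \Cref{lem: cubicInL2}), and the ensuing Gronwall argument yields a constant $c'_w > 0$ independent of $t$ such that any $z \in \mathcal{C}_t$ solving
\begin{equation*}
z(s) = e^{-\mathcal{A}(t-s)} \eta + \int_s^t e^{-\mathcal{A}(\tau-s)} \begin{bmatrix} 0 \\ 3 w_1^2(\tau) z_1(\tau) + f(\tau) \end{bmatrix} \,\mathrm{d}\tau
\end{equation*}
satisfies $\Vert z \Vert_{\mathcal{C}_t} \leq c'_w \bigl( \Vert \eta \Vert_\mathcal{E} + \Vert f \Vert_{\mathcal{L}_t} \bigr)$.

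\textbf{Step 2 (induction on the derivative order $k = i + j$).}
For $k = 1$, the formulas \eqref{eq: Sder1} and \eqref{eq: Sder2} from \Cref{lem: SmoothS} are instances of Step 1 (taking $\eta = 0$, $f = u$ and $f = 0$, respectively), which delivers the bounds on $D_v \mathcal{S}^t(v,\xi)[u]$ and $D_\xi \mathcal{S}^t(v,\xi)[\eta]$ immediately. For $k \geq 2$, differentiating the identity $e^t(\mathcal{S}^t(v,\xi),v,\xi) = 0$ a total of $k$ times in the prescribed directions and exploiting that the cubic nonlinearity satisfies $(x \mapsto x^3)^{(m)} \equiv 0$ for $m \geq 4$, one shows that $z = D^k_{v^i \xi^j} \mathcal{S}^t(v,\xi)[\eta_1,\ldots,\eta_j,u_1,\ldots,u_i]$ obeys
\begin{equation*}
z(s) = \int_s^t e^{-\mathcal{A}(\tau-s)} \begin{bmatrix} 0 \\ 3 w_1^2(\tau) z_1(\tau) + G(\tau) \end{bmatrix} \,\mathrm{d}\tau,
\end{equation*}
where $G$ is a finite sum of products of exactly three factors, each factor being either $w_1$ or the first component of a partial derivative of $\mathcal{S}^t(v,\xi)$ of order strictly less than $k$, evaluated at some subset of the directions. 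No boundary term $e^{-\mathcal{A}(t-s)}[\cdot]$ occurs because $e^t$ is affine in both $\xi$ and $v$, so its second and higher partial derivatives in those variables vanish. Applying \Cref{lem: cubicInL2} to each summand of $G$ together with the inductive bounds yields
\begin{equation*}
\Vert G \Vert_{\mathcal{L}_t}
\leq c \Bigl( \prod_{m=1}^i \Vert u_m \Vert_{\mathcal{L}_t} \Bigr) \Bigl( \prod_{m=1}^j \Vert \eta_m \Vert_\mathcal{E} \Bigr),
\end{equation*}
with $c$ depending on $T$, $c_\mathrm{em}$, $M_0$ and the constants produced at earlier induction stages. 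Applying the estimate of Step 1 with this $G$ in place of $f$ closes the induction and produces the desired constant $c_\mathcal{S}$ as the maximum of the bounds arising at orders $0, 1, \ldots, 5$.

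\textbf{Main obstacle.} The only source of genuine complication is the combinatorial bookkeeping of the Faà di Bruno expansion of $(\mathcal{S}^t(v,\xi))^3$ under mixed higher-order differentiation. Because the cubic has only three non-vanishing derivatives, every such expansion produces a finite, explicit list of triple products in $w_1$ and its lower-order derivatives, which is precisely the structure \Cref{lem: cubicInL2} is designed to handle. For the bounded range $i + j \leq 5$ the resulting expressions remain manageable, and all relevant constants depend only on $T$, $c_\mathrm{em}$, $c_w$, $\delta_1$ and $\Vert \Tilde{w} \Vert_{\mathcal{C}_T}$, hence are uniform in $t$ and in the point $(v,\xi)$ within the prescribed ball.
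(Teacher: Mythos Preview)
Your proposal is correct and follows essentially the same route as the paper. The paper also deduces the zeroth-order bound from \Cref{prop: SolStateEq} and then, for the derivatives, applies a Gronwall argument to the linearized equations (exhibited in \Cref{lem: SmoothS}) using the uniform bound on $\Vert \mathcal{S}^t(v,\xi)\Vert_{\mathcal{C}_t}$ together with \Cref{lem: cubicInL2}; the only difference is that the paper does Gronwall inline for one representative case and asserts the rest ``analogously in an iterative manner,'' whereas you package the same Gronwall step as an extension of \Cref{lem: estLinEq} and phrase the iteration as a formal induction. One cosmetic slip: under the max-norm hypothesis $\Vert(\xi,v)\Vert_{\mathcal{E}\times\mathcal{L}_t}<\tfrac12\delta_1$ you get $\Vert\xi\Vert_\mathcal{E}+\Vert v\Vert_{\mathcal{L}_t}<\delta_1$, so the bound in Step~1 should read $2c_w\delta_1$ rather than $c_w\delta_1$; this does not affect the argument.
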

\begin{proof}
	The first estimate follows directly from \Cref{prop: SolStateEq}. We only present the proof for $D_v \mathcal{S}^t$. The remaining estimates can be shown analogously in an iterative manner by noting that for derivatives of order higher than two characterizations analogous to the ones in \Cref{lem: SmoothS} can be derived. 
	
	Again denote $w = \mathcal{S}^t(v,\xi)$ and for some fixed $u \in \mathcal{L}_t $ we denote $\zeta = (\zeta_1, \zeta_2) = D_v \mathcal{S}^t(v,\xi)[u] \in \mathcal{C}_t$. According to \Cref{lem: SmoothS} for $s \in [0,t]$ it holds
	\begin{equation*}
		\zeta(s) = 
		- \int_s^t e^{-\mathcal{A}(\tau-s)} \begin{bmatrix} 0 \\ -3 w_1^2 \zeta_1 + u \end{bmatrix} \, \mathrm{d} \tau.
	\end{equation*}
	Since $t \mapsto e^{-\mathcal{A}t} $ is a contraction semigroup, for any $s \in [0,t)$ it follows
	\begin{equation*}
	\begin{aligned}
		\Vert \zeta (s) \Vert_{\mathcal{E}}
		&\leq 
		\int_s^t 3 \Vert w_1^2(\tau) \zeta_1(\tau) \Vert_{L^2(\Omega)} 
		+  \Vert u (\tau) \Vert_{L^2(\Omega)}\, \mathrm{d} \tau\\
		&\leq 
		\sqrt{T} \Vert u \Vert_{\mathcal{L}_t}
		+ 3c_\mathrm{em}^3 \int_s^t \Vert w (\tau) \Vert_\mathcal{E}^2 \Vert \zeta(\tau) \Vert_\mathcal{E} \, \mathrm{d} \tau,
	\end{aligned}
	\end{equation*}
	where the second estimate is justified by \Cref{lem: cubicInL2}. Gronwall's inequality yields
	\begin{equation*}
		\Vert \zeta (s) \Vert_{\mathcal{E}}
		\leq \sqrt{T} \Vert u \Vert_{\mathcal{L}_t} 
		\exp\left( \int_s^t 3c_\mathrm{em}^3 \Vert w \Vert_\mathcal{E}^2 \, \mathrm{d} \tau \right).
	\end{equation*}
	Since $s$ was arbitrary,
	the asserted estimate follows with the estimate for $\Vert w \Vert_{\mathcal{C}_t} = \Vert \mathcal{S}^t(v,\xi) \Vert_{\mathcal{C}_t}$.
\end{proof}
%
\subsection{Existence of a minimizer}
%
In this subsection existence of a minimizing pair is shown. For any $\epsilon > 0$ and $t \in (0,T]$ we define the corresponding set of data pairs via 
\begin{equation*}
	\mathcal{D}_\epsilon^t \coloneqq \{ (\xi,y) \in \mathcal{E} \times \mathcal{Y}_t ~\colon~ \Vert (\xi,y) \Vert_{\mathcal{E} \times \mathcal{Y}_t} < \epsilon \}.
\end{equation*}
\begin{proposition}\label{prop: exMin}
	There exists $\delta_2 \in \left(0,\tfrac{1}{2}\delta_1\right]$ independent of $t$ such that for any $(\xi,y) \in \mathcal{D}_{\delta_2}^t$ the optimal control problem \eqref{eq: OCP} admits a minimizer $(\bar{w},\bar{v}) \in \mathcal{C}_t \times \mathcal{L}_t$. Further there exists $c_\mathrm{Min} > 0$ independent of $t$, $\xi$ and $y$ such that any minimizing pair $(z,u)$ satisfies
	\begin{equation*}
		\max \left(\Vert z - \Tilde{w} \Vert_{\mathcal{C}_t}, \Vert u \Vert_{\mathcal{L}_t} \right) 
		\leq c_\mathrm{Min} \max \left( \Vert \xi \Vert_{\mathcal{E}}, \Vert y \Vert_{\mathcal{Y}_t} \right)
	\end{equation*}
	and in particular it holds $\Vert u \Vert_{\mathcal{L}_t} < \tfrac{1}{2}\delta_1$.
\end{proposition}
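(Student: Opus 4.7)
The plan is to apply the direct method of the calculus of variations to the reduced cost $\hat{J}(v) \coloneqq J(\mathcal{S}^t(v,\xi), v; t, y)$ on $\mathcal{L}_t$, exploiting that the $\tfrac{1}{2}\|v\|^2$-term provides coercivity and that \Cref{prop: SolStateEq} confines the associated states to a regime where every earlier estimate applies.

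First I would produce a comparison value. Evaluating $\hat{J}$ at $v = 0$ with $w = \mathcal{S}^t(0,\xi)$ and using $\|\mathcal{S}^t(0,\xi) - \tilde{w}\|_{\mathcal{C}_t} \leq 2 c_w \|\xi\|_\mathcal{E}$ from \Cref{prop: SolStateEq}, each of the three summands in $\hat{J}(0)$ is estimated directly (the tracking term via $\|a+b\|^2 \leq 2\|a\|^2 + 2\|b\|^2$ together with $\|C\|_{\mathcal{L}(\mathcal{E};Y)} = c_C$), giving
\begin{equation*}
	\inf \hat{J} \leq \hat{J}(0) \leq C_0 \bigl( \|\xi\|_\mathcal{E}^2 + \|y\|_{\mathcal{Y}_t}^2 \bigr)
\end{equation*}
with $C_0$ independent of $t$. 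Any minimizing sequence $(v_n) \subset \mathcal{L}_t$ then satisfies $\|v_n\|_{\mathcal{L}_t}^2 \leq 2 \hat{J}(v_n) \leq 4 C_0(\|\xi\|_\mathcal{E}^2 + \|y\|_{\mathcal{Y}_t}^2)$ eventually, so selecting $\delta_2 \leq \tfrac{1}{2}\delta_1$ small enough forces $\|\xi\|_\mathcal{E} + \|v_n\|_{\mathcal{L}_t} < \delta_1$ for all $n$, and \Cref{prop: SolStateEq} uniformly controls the states $w_n \coloneqq \mathcal{S}^t(v_n, \xi)$ in $\mathcal{C}_t$.

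Next I would extract and identify a limit. Weak compactness of $\mathcal{L}_t$ gives $v_n \rightharpoonup \bar v$ along a subsequence; recalling $w_{2,n} = \partial_t w_{1,n}$, the uniform bound on $w_n$ yields $w_{1,n}$ bounded in $C([0,t]; H_0^1(\Omega))$ with $\partial_t w_{1,n}$ bounded in $C([0,t]; L^2(\Omega))$. The compact embedding $H_0^1(\Omega) \doublehookrightarrow L^p(\Omega)$ for $p<6$ and the Aubin--Lions lemma deliver a further subsequence with $w_{1,n} \to \bar{w}_1$ strongly in $C([0,t]; L^p(\Omega))$ for some $p \in (3,6)$, so $w_{1,n}^3 \to \bar w_1^3$ almost everywhere, while \Cref{lem: cubicInL2} keeps $w_{1,n}^3$ bounded in $L^\infty(0,t; L^2(\Omega))$; hence $w_{1,n}^3 \rightharpoonup^* \bar w_1^3$ weak-$*$ in that space. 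Passing to the limit in the weak formulation \eqref{eq: weakFormula}, which is equivalent to the mild one through \Cref{lem: MildEqWeak} and the time-reversal argument underlying \Cref{cor: BackwWP}, identifies $\bar w = \mathcal{S}^t(\bar v,\xi)$.

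Weak lower semicontinuity of each term of $\hat{J}$ (convexity in $v$ and the weak convergences $w_n(0) \rightharpoonup \bar w(0)$ in $\mathcal{E}$ and $C(w_n - \tilde w) \rightharpoonup C(\bar w - \tilde w)$ in $\mathcal{Y}_t$ inherited from the pointwise weak convergence of $w_n$) then gives $\hat J(\bar v) \leq \liminf_n \hat J(v_n) = \inf \hat J$, so $(\bar w,\bar v)$ is a minimizer of \eqref{eq: OCP}. The quantitative bound is then automatic: any minimizer $(z,u)$ satisfies $\|u\|_{\mathcal{L}_t}^2 \leq 2\hat{J}(u) \leq 2C_0(\|\xi\|_\mathcal{E}^2+\|y\|_{\mathcal{Y}_t}^2)$, and combining this with \Cref{prop: SolStateEq} to control $\|z-\tilde w\|_{\mathcal{C}_t}$ supplies the asserted inequality with a uniform $c_{\mathrm{Min}}$; a final shrinkage of $\delta_2$ secures $c_{\mathrm{Min}}\delta_2 < \tfrac{1}{2}\delta_1$ and hence the last claim. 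The main obstacle is precisely the nonlinear passage to the limit: since the cubic sits at the endpoint of the embedding $H_0^1(\Omega) \hookrightarrow L^6(\Omega)$, strong convergence of $w_{1,n}^3$ in $\mathcal{L}_t$ is not at hand, and one has to detour through the weak formulation instead of working directly with the mild one.
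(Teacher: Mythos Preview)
Your approach is essentially the paper's: direct method on the reduced functional, comparison at $v=0$, a minimizing sequence confined to the regime of \Cref{prop: SolStateEq}, Aubin--Lions compactness to handle the cubic term, passage to the limit in the weak formulation, and weak lower semicontinuity. The treatment of the nonlinearity (a.e.\ convergence plus $L^\infty(0,t;L^2)$-bound versus the paper's strong $C([0,t];L^1)$ convergence of $u_n^3$) is a harmless variation.

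There is, however, one genuine gap. To identify $\bar w = \mathcal{S}^t(\bar v,\xi)$ via the weak formulation you must verify not only the integral identity \eqref{eq: weakFormula} but also the terminal condition $(\bar u(t),\partial_t\bar u(t)) = \tilde w(t)+\xi$. Your strong convergence $w_{1,n}\to\bar w_1$ in $C([0,t];L^p)$ handles the first component, but you have said nothing about $\partial_t\bar u(t)$. The paper closes this by a \emph{second} compactness step: from the equation one gets $\partial_{tt}u_n$ bounded in $L^2(0,t;H^{-1}(\Omega))$, and then $L^2(\Omega)\doublehookrightarrow H^{-1}(\Omega)$ together with \cite[Cor.~4]{Sim86} yields $\partial_t u_n \to \partial_t\bar u$ in $C([0,t];H^{-1}(\Omega))$, which delivers the velocity terminal condition. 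The same convergence is what lets you identify the weak $\mathcal{E}$-limit of $w_n(0)$ in its second component; your sentence ``inherited from the pointwise weak convergence of $w_n$'' glosses over exactly this, since from your arguments alone you have no pointwise-in-time information on $w_{2,n}$. Once this second Aubin--Lions step is inserted, your sketch matches the paper's proof.
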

\begin{proof}
	For now set $\delta_2 = \tfrac{1}{2} \delta_1$ and let $\Vert \xi \Vert_\mathcal{E} < \delta_2$. For any $v \in \mathcal{L}_t$ we denote $w_v = \mathcal{S}^t(v,\xi)$. We further set
	$j = \inf\limits_{v \in \mathcal{L}_t} J(w_v,v;t,y).$
	\Cref{prop: SolStateEq} ensures that for $w^0 = \mathcal{S}^t(0,\xi) \in \mathcal{C}_t$ it holds
	\begin{equation*}
		\begin{aligned}
			j 
			&\leq J(w^0,0;t,y) 
			= \frac{1}{2} \Vert w^0(0) - w_0 \Vert_{\mathcal{E}}^2
			+ \frac{\alpha}{2} \left\Vert y - C \left( w^0 - \Tilde{w} \right) \right\Vert_{\mathcal{Y}_t}^2 \\
			&\leq \frac{1}{2} \Vert w^0 - \Tilde{w} \Vert_{\mathcal{C}_t}^2
			+ \alpha \Vert y \Vert_{\mathcal{Y}_t}^2
			+ \alpha T c_C^2 \Vert w^0 - \Tilde{w} \Vert_{\mathcal{C}_t}^2
			\leq c \Vert \xi \Vert_\mathcal{E}^2 + \alpha \Vert y \Vert_{\mathcal{Y}_t}^2
			\leq c \delta_2^2,			
		\end{aligned}
	\end{equation*}
	where again $c_C = \Vert C \Vert_{\mathcal{L}(\mathcal{E};Y)}$.
	Hence $0 \leq j \leq c \delta_2^2 < \infty$ and there exists a sequence $(w_k,v_k)$ such that $w_k = \mathcal{S}^t(v_k,\xi) $ and $J(w_k,v_k;t,y) \to j$, for $k \to \infty$. If $j > 0$ it can be assumed that $J(w_k,v_k;t,y) \leq 2j$ and it follows
	\begin{equation}\label{eq: ExPr1}
		J(w_k,v_k,t,y) \leq c \delta_2^2.
	\end{equation}
	If $j = 0$, \eqref{eq: ExPr1} can be assumed directly. Hence
	\begin{equation*}
		\Vert v_k \Vert_{\mathcal{L}_t}^2 
		\leq 2 J(w_k,v_k;t,y) 
		\leq c \delta_2^2
	\end{equation*}
	implying that $(v_k)$ is bounded in $\mathcal{L}_t$. It therefore admits a subsequence (also denoted by $(v_k)$) such that
	\begin{equation}\label{eq: weakConvV}
		v_k \rightharpoonup \bar{v}~~~\text{ in } \mathcal{L}_t. 	
	\end{equation}
	Turning to $(w_k)$ we find that after an appropriate decrease of $\delta_2$ it holds 
	\begin{equation*}
		\Vert v_k \Vert_{\mathcal{L}_t} 
		\leq c \delta_2 < \tfrac{1}{2} \delta_1
	\end{equation*}
	and \Cref{prop: SolStateEq} can be applied. We obtain
	\begin{equation*}
		\Vert w_k \Vert_{\mathcal{C}_t} 
		\leq \Vert w_k - \Tilde{w} \Vert_{\mathcal{C}_t} 
		+ \Vert \Tilde{w} \Vert_{\mathcal{C}_t}
		\leq 2 c_w \left( \Vert \xi \Vert_{\mathcal{E}} 
		+ \Vert v_k \Vert_{\mathcal{L}_t} \right) 
		+ \Vert \Tilde{w} \Vert_{\mathcal{C}_t}. 
	\end{equation*}
	Hence $(w_k)$ is bounded in $\mathcal{C}_t$ implying that it is bounded in $L^\infty(0,t;\mathcal{E})$. We obtain a subsequence $(w_k)$ such that
	\begin{equation}\label{eq: weakConvL2H1}
		w_k \overset{\ast}{\rightharpoonup} \bar{w}~~~ \text{ in } L^\infty(0,t;\mathcal{E}) 
	\end{equation}
	for some element $\bar{w} \in L^\infty(0,t;\mathcal{E})$.
	To show that $\bar{w}$ is the solution associated with $\bar{v}$ we switch to the setting of weak solutions. We first note that \eqref{eq: weakConvL2H1} implies $\text{weak}^*$ convergence of $w_{k,1}$ and $\partial_t w_{k,1} = w_{k,2}$ in $L^\infty(0,t;L^2(\Omega))$. Hence it follows $\bar{w}_2 = \partial_t \bar{w}_1$.
	In accordance with \Cref{cor: BackwWP} we denote 
	\begin{equation*}
		\begin{bmatrix} u_k \\ \partial_t u_k \end{bmatrix}
		= 
		w_k
		~~~~~
		\text{and}
		~~~~~
		\begin{bmatrix} \bar{u} \\ \partial_t \bar{u} \end{bmatrix}
		=
		\bar{w}.
	\end{equation*}
	We get that $u_k$ is the unique weak solution associated with $v_k$ and \eqref{eq: weakConvL2H1} implies 
	\begin{equation}\label{eq: weakConvL2H1_u}
		\begin{alignedat}{2}
			u_k &\overset{\ast}{\rightharpoonup} \bar{u} ~~~ &&\text{ in } L^\infty(0,t;H^1_0(\Omega)),\\ 
			\partial_t u_k &\overset{\ast}{\rightharpoonup} \partial_t \bar{u} ~~~ &&\text{ in } L^\infty(0,t;L^2(\Omega)). 
		\end{alignedat}
	\end{equation}
	In the following we pass to the limit in the weak formulation to show that $\bar{u}$ is the weak solution associated with $\bar{v}$ which then implies that $\bar{w}$ is the mild solution. 
	For all $k$ and $\varphi \in C_c^\infty((0,t)\times \Omega)$ it holds
	\begin{equation}\label{eq: passLim}
		\begin{aligned}
			&- \int_0^t (\partial_t u_k(s), \partial_t \varphi(s))_{L^2(\Omega)} \,\mathrm{d}s
			+ \int_0^t (u_k^3(s), \varphi(s))_{L^2(\Omega)} \,\mathrm{d}s\\
			&= \int_0^t (v_k(s),\varphi(s))_{L^2(\Omega)} \,\mathrm{d}s
			- \int_0^t (\nabla u_k(s), \nabla \varphi(s))_{L^2(\Omega)} \,\mathrm{d}s.
		\end{aligned}
	\end{equation}
	Convergence of the linear terms is guaranteed by \eqref{eq: weakConvV} and \eqref{eq: weakConvL2H1_u}.
	We turn to the nonlinear term and note that $H^1_0(\Omega) \doublehookrightarrow L^3(\Omega) \hookrightarrow L^2(\Omega)$. Due to \eqref{eq: weakConvL2H1_u} $(u_k)$ and $(\partial_t u_k)$ are bounded in $L^\infty(0,t;H^1_0(\Omega))$ and in $L^\infty(0,t;L^2(\Omega))$, respectively. An application of \cite[Cor.~4]{Sim86} yields existence of a subsequence $(u_k)$ satisfying
	\begin{equation}\label{eq: convInCLp}
		u_k \rightarrow \bar{u}~~~ \text{ in } C([0,t];L^3(\Omega)).
	\end{equation}
	It follows that $u_k^3$ converges to $\bar{u}^3$ in $C([0,t];L^1(\Omega))$.
	Passing to the limit in \eqref{eq: passLim} verifies that for any $\varphi \in C_c^\infty((0,t) \times \Omega)$ it holds
	\begin{equation*}
		\begin{aligned}
			&- \int_0^t (\partial_t \bar{u}(s), \partial_t \varphi(s))_{L^2(\Omega)} \,\mathrm{d}s
			+ \int_0^t (\bar{u}^3(s), \varphi(s))_{L^2(\Omega)} \,\mathrm{d}s\\
			&= \int_0^t (\bar{v}(s),\varphi(s))_{L^2(\Omega)} \,\mathrm{d}s
			- \int_0^t (\nabla \bar{u}(s), \nabla \varphi(s))_{L^2(\Omega)} \,\mathrm{d}s.
		\end{aligned}
	\end{equation*}
	It remains to show that the final condition is fulfilled. Since $u_k$ is the weak solution with respect to $v_k$, for any $\varphi \in C^\infty_c((0,t) \times \Omega)$ and almost all $s \in (0,t)$ it holds
	\begin{equation*}
		\begin{aligned}
			&\left\vert \langle \partial_{tt} u_k(s),\varphi(s) \rangle_{H^{-1}(\Omega) \times H^1_0(\Omega)} \right\vert\\
			&\leq \left\vert (u_k^3(s),\varphi(s))_{L^2(\Omega)} \right\vert 
			+ \left\vert (v_k(s),\varphi(s))_{L^2(\Omega)} \right\vert
			+ \left\vert (\nabla u_k(s),\nabla \varphi(s))_{L^2(\Omega)} \right\vert\\
			&\leq c \left( \Vert u_k(s) \Vert_{H^1_0(\Omega)}^3 
			+ \Vert v_k(s) \Vert_{L^2(\Omega)}
			+ \Vert u_k(s) \Vert_{H^1_0(\Omega)} \right) \Vert \varphi \Vert_{H^1_0(\Omega)}.
		\end{aligned}
	\end{equation*}
	It follows
	\begin{equation*}
		\begin{aligned}
			\int_0^t \Vert \partial_{tt} u_k(s) \Vert_{H^{-1}(\Omega)}^2 \, \mathrm{d}s
			\leq c \int_0^t \Vert u_k(s) \Vert_{H^1_0(\Omega)}^6 
			+ \Vert v_k(s) \Vert_{L^2(\Omega)}^2
			+ \Vert u_k(s) \Vert_{H^1_0(\Omega)}^2 \, \mathrm{d}s.
		\end{aligned}
	\end{equation*}
	Since $(u_k)$ and $(v_k)$ are bounded in $L^\infty(0,t;H^1_0(\Omega))$ and in $\mathcal{L}_t$, respectively, $(\partial_{tt} u_k)$ is bounded in $L^2(0,t;H^{-1}(\Omega))$. The embedding $L^2(\Omega) \doublehookrightarrow H^{-1}(\Omega)$ allows another application of \cite[Cor.~4]{Sim86} to obtain a subsequence such that
	\begin{equation}\label{eq: convInCH-1}
		\partial_t u_k \rightarrow \partial_t \bar{u}~~~ \text{ in } C([0,t];H^{-1}(\Omega)). 
	\end{equation}
	Now \eqref{eq: convInCLp} and \eqref{eq: convInCH-1} allow us to conclude
	\begin{equation*}
		\begin{aligned}
			\Tilde{w}_1(t) + \xi_1 &= u_k(t) \rightarrow \bar{u}(t)~\text{ in } L^3(\Omega)\\
			\partial_t \Tilde{w}_1(t) + \xi_2 &= \partial_t u_k(t), \rightarrow \partial_t \bar{u}(t)~\text{ in } H^{-1}(\Omega),
		\end{aligned}
	\end{equation*}
	and the final condition is satisfied. Therefore $\bar{u}$ is the weak solution associated with $\bar{v}$ and we conclude that $\bar{w}$ is the mild solution, i.e., $\bar{w} = w_{\bar{v}} \in \mathcal{C}_t$.
	
	In order to show that $(\bar{w},\bar{v})$ is a minimizing pair we require stronger convergence of $(w_k(0))$ than what follows from \eqref{eq: convInCLp} and \eqref{eq: convInCH-1}. To that end note that the boundedness of $(w_k)$ in $\mathcal{C}_t$ implies boundedness of $(w_k(0))$ in $\mathcal{E}$ and we extract a subsequence satisfying
	$w_k(0) \rightharpoonup \bar{w}(0)$ in $\mathcal{E}$.
	With the weak lower semi continuity of norms it follows that $(\bar{w},\bar{v})$ is indeed a minimizer.
	
	For any minimizer $(z,u)$ it holds $J(z,u;t,y) = j$, hence the estimate can be shown using the techniques deployed above together with \Cref{prop: SolStateEq}. 
\end{proof}
%
\subsection{Optimality conditions}
In this subsection we introduce the \textit{adjoint state} and present a first order necessary optimality condition. Note that due to a lack of regularity the adjoint is set via an ad hoc definition rather than derived as the solution of an adjoint equation as was done, e.g., in \cite{KunM20}.
\begin{definition}
	For any fixed $t \in (0,T]$ and $(\xi,\bar{v}) \in \mathcal{E} \times \mathcal{L}_t$ satisfying $\max \left(\Vert \xi \Vert_\mathcal{E}, \Vert \bar{v} \Vert_{\mathcal{L}_t} \right) < \frac{\delta_1}{2}$ denote $\bar{w} = \mathcal{S}^t(\bar{v},\xi)$. We define the associated adjoint state $p \in \left(\mathcal{L}_t \right)^* = \mathcal{L}_t$ as
	\begin{equation*}
		p = D_v \mathcal{S}^t(\bar{v},\xi)^* \left(
		\delta_0^* (\bar{w} (0) - w_0)
		- \alpha \Psi[ \bar{w} - \Tilde{w},y]
		\right), 
	\end{equation*}
	where for $z \in \mathcal{C}_t$ the bounded linear point evaluation operator $\delta_0 \colon \mathcal{C}_t \rightarrow \mathcal{E}$ and its adjoint $\delta_0^* \colon \mathcal{E} \rightarrow \mathcal{C}_t^*$ are defined as
	\begin{equation*}
		\delta_0 (z) = z(0),
		~~~~~
		\langle \delta_0^* (\xi) , z \rangle_{\mathcal{C}_t^*,\mathcal{C}_t} = ( \xi, z(0) )_\mathcal{E}.
	\end{equation*}
	For $z \in \mathcal{C}_t$ and $y \in \mathcal{Y}_t$ we define $\Psi[z,y] \in \mathcal{C}_t^*$ as
	\begin{equation*}
		\langle \Psi[z,y], \zeta \rangle_{\mathcal{C}_t^*,\mathcal{C}_t}
		= \int_0^t \left\langle C^* \left(y - C z \right), \zeta \right\rangle_{\mathcal{E}^*,\mathcal{E}} \, \mathrm{d} s,
		~~~~~ \zeta \in \mathcal{C}_t.
	\end{equation*}
\end{definition}
Before we state the first order optimality condition the adjoint state is estimated from above.
\begin{lemma}\label{lem: estAdj}
	Let $(\xi,y) \in \mathcal{D}_{\delta_2}^t$. Let $(\bar{w},\bar{v})$ be a minimizer of \eqref{eq: OCP} with associated adjoint $p$. There exists a constant $c_p > 0$ independent of $t$, $\xi$ and $y$ such that
	\begin{equation*}
		\Vert p \Vert_{\mathcal{L}_t}
		\leq c_p \max\left( \Vert \xi \Vert_\mathcal{E}, \Vert y \Vert_{\mathcal{Y}_t} \right).
	\end{equation*}
\end{lemma}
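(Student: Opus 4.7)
The plan is to read off the estimate directly from the defining formula of $p$, using the operator-norm bound on $D_v\mathcal{S}^t$ from \Cref{lem: EstS} together with the distance estimate on minimizers from \Cref{prop: exMin}. Since $\Tilde{w}(0) = w_0$ by construction of the nominal trajectory, and $\max(\Vert\xi\Vert_{\mathcal{E}},\Vert \bar v\Vert_{\mathcal{L}_t}) < \tfrac{1}{2}\delta_1$ by \Cref{prop: exMin}, both tools are applicable.

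First I would split the estimate as
\begin{equation*}
\Vert p \Vert_{\mathcal{L}_t}
\leq \Vert D_v \mathcal{S}^t(\bar{v},\xi)^* \Vert_{\mathcal{L}(\mathcal{C}_t^*;\mathcal{L}_t)}
\left( \Vert \delta_0^*(\bar{w}(0) - w_0) \Vert_{\mathcal{C}_t^*} + \alpha \Vert \Psi[\bar{w} - \Tilde{w}, y] \Vert_{\mathcal{C}_t^*} \right),
\end{equation*}
and then bound each factor by a $t$-uniform constant times $\max(\Vert\xi\Vert_{\mathcal{E}},\Vert y\Vert_{\mathcal{Y}_t})$. The first factor is bounded by $c_{\mathcal{S}}$ independent of $t$ by \Cref{lem: EstS} (applied with $i=1$, $j=0$, taking the supremum over unit-norm $u\in\mathcal{L}_t$ and passing to the adjoint).

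Next, for the point-evaluation term, the definition of $\delta_0^*$ yields $\Vert \delta_0^*(\bar{w}(0)-w_0)\Vert_{\mathcal{C}_t^*} \leq \Vert \bar{w}(0)-w_0\Vert_{\mathcal{E}} = \Vert \bar{w}(0)-\Tilde{w}(0)\Vert_{\mathcal{E}} \leq \Vert \bar{w}-\Tilde{w}\Vert_{\mathcal{C}_t}$, which by \Cref{prop: exMin} is controlled by $c_\mathrm{Min} \max(\Vert\xi\Vert_{\mathcal{E}},\Vert y\Vert_{\mathcal{Y}_t})$. For the tracking term, for any $\zeta \in \mathcal{C}_t$ with $\Vert\zeta\Vert_{\mathcal{C}_t}\leq 1$,
\begin{equation*}
\left\vert \langle \Psi[\bar{w}-\Tilde{w},y], \zeta \rangle_{\mathcal{C}_t^*,\mathcal{C}_t} \right\vert
\leq \int_0^t c_C \Vert y(s) - C(\bar{w}-\Tilde{w})(s) \Vert_Y \, \mathrm{d}s
\leq c_C \sqrt{T}\, \Vert y \Vert_{\mathcal{Y}_t} + c_C^2 \, T\, \Vert \bar{w} - \Tilde{w} \Vert_{\mathcal{C}_t},
\end{equation*}
after a Cauchy--Schwarz step in time and $t\leq T$. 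Invoking \Cref{prop: exMin} once more absorbs the last term into $c\,\max(\Vert\xi\Vert_{\mathcal{E}},\Vert y\Vert_{\mathcal{Y}_t})$.

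Combining these three bounds yields the claim with $c_p = c_{\mathcal{S}}\bigl(c_\mathrm{Min} + \alpha(c_C\sqrt{T} + c_C^2 T c_\mathrm{Min})\bigr)$, which is independent of $t \in (0,T]$, $\xi$ and $y$. I expect no real obstacle here: the only point requiring care is tracking that all constants are $t$-uniform, which is ensured because \Cref{lem: EstS} and \Cref{prop: exMin} are both stated with $t$-independent constants and the remaining $t$-dependence comes only through the factors $\sqrt{t}$ and $t$ that we bound by $\sqrt{T}$ and $T$.
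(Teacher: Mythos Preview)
Your proposal is correct and follows essentially the same route as the paper: split via the defining formula for $p$, bound $\Vert D_v\mathcal{S}^t(\bar v,\xi)^*\Vert$ via \Cref{lem: EstS}, estimate the $\delta_0^*$ term using $\Tilde{w}(0)=w_0$ and \Cref{prop: exMin}, and estimate $\Vert\Psi[\bar w-\Tilde w,y]\Vert_{\mathcal{C}_t^*}$ via the same Cauchy--Schwarz-in-time bound. If anything, you are slightly more careful than the paper in noting explicitly that \Cref{prop: exMin} guarantees the smallness hypothesis of \Cref{lem: EstS} and in tracking the resulting $t$-independent constant.
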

\begin{proof}
	First consider
	\begin{equation}\label{eq: estPsi1}
		\sup_{\Vert \varphi \Vert_{\mathcal{C}_t} = 1 }
		\vert \langle \Psi[z,y],\varphi \rangle_{\mathcal{C}_t^*,\mathcal{C}_t} \vert
		\leq \int_0^t \left\Vert C^* \left(y - C z \right) \right\Vert_{\mathcal{E}^*} \, \mathrm{d} s 
		\leq \sqrt{T} c_C \Vert y \Vert_{\mathcal{Y}_t}		
		+  T c_C^2  \Vert z \Vert_{\mathcal{C}_t} .
	\end{equation}
	Utilizing \Cref{prop: exMin} and defining an appropriate $c_p > 0$ it follows
	\begin{equation}\label{eq: estPsi}
		\Vert \Psi[\bar{w} - \Tilde{w},y] \Vert_{\mathcal{C}_t^*}
		\leq c_p \max \left( \Vert \xi \Vert_{\mathcal{E}}, \Vert y \Vert_{\mathcal{Y}_t} \right).
	\end{equation}
	Due to the definition of $p$ it holds
	\begin{equation*}
		\begin{aligned}
			\Vert p \Vert_{\mathcal{L}_t}
			\leq \Vert D_v\mathcal{S}^t(\bar{v},\xi)^* \Vert_{\mathcal{L}(\mathcal{C}_t^*;\mathcal{L}_t ) } 
			\left( \Vert \delta_0^* \Vert_{\mathcal{L}(\mathcal{E}^*;\mathcal{C}_t^* )} \Vert \bar{w} - \Tilde{w} \Vert_{\mathcal{C}_t} 
			+ \alpha \Vert \Psi[\bar{w} - \Tilde{w},y] \Vert_{\mathcal{C}_t^*} 
			\right).
		\end{aligned}
	\end{equation*}
	Using \eqref{eq: estPsi}, \Cref{prop: exMin}, \Cref{lem: EstS}, the identity $ \Vert \delta_0^* \Vert_{\mathcal{L}(\mathcal{C}_t;\mathcal{E})} = 1$ one can adjust $c_p$ such that the assertion holds.
\end{proof}
The first order necessary condition follows from the definition of $p$.
\begin{proposition}\label{prop: optCond}
	For any $(\xi,y) \in \mathcal{D}_{\delta_2}^t$ and any minimizing pair $(\bar{w},\bar{v})$ of the corresponding optimal control problem \eqref{eq: OCP} it holds
	\begin{equation*}
		\bar{v} = - p ~~~~~ \text{ in } \mathcal{L}_t,
	\end{equation*}
	where $p$ is the adjoint state associated with $(\bar{w},\bar{v})$.
\end{proposition}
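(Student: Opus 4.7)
The plan is to reduce the constrained optimization to an unconstrained one via the data-to-state operator, take a Fr\'echet derivative of the reduced cost at the minimizer $\bar v$, and recognize the gradient as exactly $\bar v + p$. Since $\mathcal{S}^t$ is defined on all of $\mathcal{L}_t \times \mathcal{E}$ and is of class $C^\infty$ by \Cref{lem: SmoothS}, the reduced cost
\begin{equation*}
\widehat J(v) \coloneqq J(\mathcal{S}^t(v,\xi),v;t,y)
\end{equation*}
is smooth on the open set $\mathcal{L}_t$, and every minimizing pair $(\bar w,\bar v)$ of \eqref{eq: OCP} satisfies $\bar w = \mathcal{S}^t(\bar v,\xi)$, so $\bar v$ is an unconstrained minimizer of $\widehat J$. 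Hence $D\widehat J(\bar v) = 0$ in $\mathcal{L}_t^*$.

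Next I would compute $D\widehat J(\bar v)[u]$ for an arbitrary direction $u \in \mathcal{L}_t$ by the chain rule. Writing $z = D_v\mathcal{S}^t(\bar v,\xi)[u] \in \mathcal{C}_t$, the three parts of the cost contribute
\begin{equation*}
D\widehat J(\bar v)[u]
= \bigl(\bar w(0) - w_0,\, z(0)\bigr)_\mathcal{E}
+ (\bar v, u)_{\mathcal{L}_t}
- \alpha\!\int_0^t \!\bigl(y - C(\bar w - \widetilde w),\, Cz\bigr)_Y \dd s.
\end{equation*}
The endpoint term rewrites as $\langle \delta_0^*(\bar w(0) - w_0),\, z\rangle_{\mathcal{C}_t^*,\mathcal{C}_t}$ by the very definition of $\delta_0^*$, and the tracking term equals $-\alpha\,\langle \Psi[\bar w - \widetilde w, y],\, z\rangle_{\mathcal{C}_t^*,\mathcal{C}_t}$ by the definition of $\Psi$.

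Now I would use the adjoint $D_v\mathcal{S}^t(\bar v,\xi)^*\colon \mathcal{C}_t^* \to \mathcal{L}_t^* \cong \mathcal{L}_t$ to pull $z$ back to $u$ and obtain
\begin{equation*}
D\widehat J(\bar v)[u]
= \bigl\langle D_v\mathcal{S}^t(\bar v,\xi)^*\bigl(\delta_0^*(\bar w(0)-w_0) - \alpha\Psi[\bar w - \widetilde w,y]\bigr),\, u\bigr\rangle_{\mathcal{L}_t,\mathcal{L}_t}
+ (\bar v, u)_{\mathcal{L}_t}
= (p + \bar v, u)_{\mathcal{L}_t},
\end{equation*}
where the final equality is exactly the definition of the adjoint state $p$. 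Since $u \in \mathcal{L}_t$ is arbitrary and $D\widehat J(\bar v) = 0$, the Riesz representation theorem yields $\bar v = -p$ in $\mathcal{L}_t$.

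I do not expect a genuine obstacle: the smoothness of $\widehat J$ and the well-definedness of $D_v\mathcal{S}^t(\bar v,\xi)^*$ are already furnished by \Cref{lem: SmoothS}, the hypotheses $(\xi,y) \in \mathcal{D}_{\delta_2}^t$ together with \Cref{prop: exMin} guarantee $\Vert \bar v\Vert_{\mathcal{L}_t} < \tfrac12\delta_1$ so the operator-theoretic tools of \Cref{lem: EstS} apply, and the rewriting of the two non-quadratic terms through $\delta_0^*$ and $\Psi$ is purely a duality bookkeeping exercise that the ad hoc definition of $p$ was designed precisely to absorb. The only point requiring a line of justification is the interchange $\langle \delta_0^*(\cdot), z\rangle = (\cdot, z(0))_\mathcal{E}$ and the analogous identity for $\Psi$, both of which are immediate from their definitions.
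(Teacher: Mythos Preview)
Your proposal is correct and follows exactly the same approach as the paper: pass to the reduced cost $v\mapsto J(\mathcal{S}^t(v,\xi),v;t,y)$, use that $\bar v$ is an interior minimizer, and read off $\bar v+p=0$ from the vanishing first-order condition via the chain rule. The paper's proof is simply a terser version that omits the explicit computation of $D\widehat J(\bar v)[u]$ which you have written out.
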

\begin{proof}
	Let $(\bar{w},\bar{v})$ be a minimizing pair. Then the estimate given in \Cref{prop: exMin} ensures that $\bar{v}$ is a minimizer of the reduced cost functional
	\begin{equation*}
		J_\mathrm{r} \colon \left\{ v ~\colon~ \Vert v \Vert_{\mathcal{L}_t}  < \tfrac{1}{2} \delta_1 \right\}
		\rightarrow \mathbb{R},~~~
		J_\mathrm{r} (v) = J(\mathcal{S}(v,\xi),v;t,y).
	\end{equation*}
	The first order optimality condition $D J_\mathrm{r} (\bar{v}) = 0$ together with the chain rule yields the assertion.	
\end{proof}
We conclude the section by highlighting one of the major differences in the analysis of the finite-dimensional and the infinite-dimensional case.
\begin{remark}\label{rem: OptTrajNotClassical}
	A well-known implication of the optimality condition is that optimal controls may inherit higher regularity from the adjoint state. In the finite-dimensional case discussed in \cite{BreS24} this has tremendous consequences, namely that optimal trajectories are \textit{classical solutions} of the state equation. This ultimately leads to $C^1$-regularity of the value function, implying that it is a \textit{classical solution} of the associated Hamilton-Jacobi-Bellman equation. In contrast, in the context analyzed in this work no such effect occurs. 
\end{remark}
%

%
\section{Regularity of the value function}\label{sec: RegOfValFun}
%
This section discusses the regularity of the value function with respect to the spatial variable and the measurement. 
For $t \in (0,T]$ and $(\xi,y) \in \mathcal{E} \times \mathcal{Y}_t$ it is defined as
\begin{equation*}
	\mathcal{V}(t,\xi,y) = \inf\limits_{(w,v) \in \mathcal{C}_t \times \mathcal{L}_t} 
	J(w,v;t,y),~ \text{ subject to } e^t(w,v,\xi) = 0.
\end{equation*}
In time zero we set $\mathcal{V}(0,\xi,y) = \frac{1}{2} \Vert \xi  \Vert_{\mathcal{E}}^2$.
In order to show smoothness we apply the inverse mapping theorem to a mapping representing the optimality condition. Define the two spaces
\begin{equation*}
	X_t = \mathcal{C}_t \times \mathcal{L}_t \times \mathcal{L}_t \times \mathcal{Y}_t \times \mathcal{E},
	~~~~~~~
	Y_t = \mathcal{E} \times \mathcal{Y}_t \times \mathcal{C}_t \times \mathcal{L}_t \times \mathcal{L}_t
\end{equation*}
and the mapping $ \Phi_t \colon X_t \rightarrow Y_t $ as 
\begin{equation*}
	\Phi_t(\bar{w},\bar{v},p,y,\xi)
	=
	\begin{bmatrix}
		\xi \\
		y \\
		S^t(\bar{v},\xi) - \bar{w} \\
		D_v \mathcal{S}^t(\bar{v},\xi)^* \left(
		\delta_0^* \left( \bar{w}(0) - w_0 \right)
		- \alpha \Psi[\bar{w} - \Tilde{w},y] 
		\right) - p \\
		\bar{v} + p
	\end{bmatrix}.
\end{equation*}
Note that it holds $\Phi_t(w,v,p,y,\xi) = (\xi,y,0,0,0)$ if and only $(w,v,p)$ satisfies the first order optimality condition associated with the control problem given by $\xi$ and $y$.
Before applying the inverse mapping theorem some technical preparations are in order.
\begin{lemma}\label{lem: estPhi}
	Denote $h_0 = (\Tilde{w},0,0,0,0) \in X_t$ and let $h = (\bar{w},\bar{v},p,y,\xi) \in X_t$ be such that $\Vert h - h_0 \Vert_{X_t} < \delta_2$.
	Then the mapping $D\Phi_t(h_0) \in \mathcal{L}(X_t;Y_t)$ is bijective and there exists  $c_{\Phi} > 0$ independent of $t$ such that it holds
	\begin{equation*}
		\begin{aligned}
			\Vert D\Phi_t(h_0) - D\Phi_t(h)
			\Vert_{\mathcal{L}(X_t;Y_t)}
			&\leq c_{\Phi} \Vert h_0 - h \Vert_{X_t},\\
			\Vert D\Phi_t(h_0)^{-1} \Vert_{\mathcal{L}(Y_t;X_t)}
			&\leq c_{\Phi}.
		\end{aligned}
	\end{equation*}
\end{lemma}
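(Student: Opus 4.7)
The mapping $\Phi_t$ is $C^\infty$ by Lemma~\ref{lem: SmoothS} combined with the boundedness of $\delta_0^*$, $\Psi$ and $C^*$. I would prove the two claims by computing $D\Phi_t$ explicitly, exploiting the cancellations at the base point coming from $\mathcal{S}^t(0,0) = \tilde w$ and $\tilde w(0) = w_0$, and then reducing everything to the uniform-in-$t$ derivative estimates supplied by Lemma~\ref{lem: EstS}.

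\textbf{Lipschitz estimate.} I would write
\begin{equation*}
D\Phi_t(h) - D\Phi_t(h_0) = \int_0^1 D^2\Phi_t\bigl(h_0 + s(h-h_0)\bigr)[h - h_0, \,\cdot\,]\,\mathrm{d}s
\end{equation*}
and bound the integrand uniformly on the neighborhood $\|h - h_0\|_{X_t} < \delta_2 \le \tfrac12\delta_1$. The only nonlinear components of $\Phi_t$ are the third and the fourth; both depend on $\mathcal{S}^t$ through expressions whose second derivatives involve partial derivatives of $\mathcal{S}^t$ of order at most three (the third component directly, the fourth via the product rule applied to the prefactor $D_v\mathcal{S}^t(\bar v,\xi)^*$ and the affine factor in $(\bar w,y)$). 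Lemma~\ref{lem: EstS} delivers bounds by $c_\mathcal{S}$ independent of $t$ for all such derivatives; combining with $\|\delta_0^*\| = 1$, $\|C^*\| = c_C$ and the estimate $\|\Psi[\,\cdot\,,\,\cdot\,]\|_{\mathcal{L}(\mathcal{C}_t\times\mathcal{Y}_t;\mathcal{C}_t^*)} \le \sqrt{T}c_C + Tc_C^2$ from~\eqref{eq: estPsi1} yields the desired constant $c_\Phi$.

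\textbf{Invertibility and main obstacle.} At $h_0$ one has $\bar w(0) - w_0 = 0$ and $\bar w - \tilde w = 0$, so the multiplier $\delta_0^*(\bar w(0) - w_0) - \alpha\Psi[\bar w - \tilde w, y]$ inside the adjoint component vanishes and the product-rule terms arising from differentiating the prefactor $D_v\mathcal{S}^t(\bar v,\xi)^*$ drop out. Writing the equation $D\Phi_t(h_0)[(\delta w,\delta v,\delta p,\delta y,\delta\xi)] = (a_1,\ldots,a_5)$ thus immediately gives $\delta\xi = a_1$, $\delta y = a_2$, $\delta v = a_5 - \delta p$; using the third equation to express $\delta w$ as an affine function of $\delta p$ and substituting into the fourth collapses the system to a single equation $(\mathrm{Id} + \mathcal{M})\delta p = g(a)$ in $\mathcal{L}_t$, where
\begin{equation*}
\langle \mathcal{M}\delta p, \delta p\rangle_{\mathcal{L}_t}
= \bigl\|(D_v\mathcal{S}^t(0,0)\delta p)(0)\bigr\|_\mathcal{E}^2
+ \alpha\int_0^t \bigl\|C (D_v\mathcal{S}^t(0,0)\delta p)(s)\bigr\|_Y^2\,\mathrm{d}s \ge 0.
\end{equation*}
Hence $\mathrm{Id} + \mathcal{M}$ is coercive with $\|(\mathrm{Id}+\mathcal{M})^{-1}\|_{\mathcal{L}(\mathcal{L}_t)} \le 1$, while $\|g(a)\|_{\mathcal{L}_t}$ is controlled by a $t$-independent multiple of $\|a\|_{Y_t}$ via Lemma~\ref{lem: EstS} and the estimate on $\Psi$ mentioned above. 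Back-substitution recovers $\delta v$ and $\delta w$ with analogous bounds, establishing both bijectivity and the asserted norm bound on $D\Phi_t(h_0)^{-1}$. The main obstacle is precisely this structural observation: without the cancellation of the multiplier at $h_0$, the residual equation for $\delta p$ would acquire an indefinite term, and the uniform-in-$t$ bound on the inverse would require the later coercivity analysis of $D_{\xi\xi}^2\mathcal{V}$ carried out in Proposition~\ref{prop: D2Coerc}.
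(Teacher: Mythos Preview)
Your argument is correct. For the Lipschitz estimate you package the computation slightly differently (integrating $D^2\Phi_t$ along the segment rather than expanding the difference componentwise and then applying Taylor to the pieces of $\mathcal{S}^t$, as the paper does), but the essential ingredients---the uniform bounds from Lemma~\ref{lem: EstS} and the estimate~\eqref{eq: estPsi1} on $\Psi$---are the same.

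For the invertibility and the bound on $D\Phi_t(h_0)^{-1}$ your route genuinely differs from the paper's. You eliminate variables directly and reduce to the operator equation $(\mathrm{Id}+\mathcal{M})\delta p = g$ with $\mathcal{M}=S^*(\delta_0^*\delta_0+\alpha C^*C)S$ self-adjoint and positive semidefinite ($S=D_v\mathcal{S}^t(0,0)$), whence Lax--Milgram gives $\Vert(\mathrm{Id}+\mathcal{M})^{-1}\Vert\le 1$ and back-substitution closes the argument. The paper instead recognizes the linearized system~\eqref{eq: OptSysLin} as a specialization of the optimality system~\eqref{eq: OptSysGLQR} for the general linear--quadratic problem treated in the appendix, and invokes Proposition~\ref{prop: GLQR} (whose proof is via strict convexity of the reduced functional). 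Your argument is more self-contained and arguably more transparent for this single instance; the paper's detour through the appendix pays off because the same GLQR machinery is reused verbatim in the proof of Lemma~\ref{lem: DerSolLQR} to bound the derivatives of $\mathcal{W}_t$, $\mathcal{U}_t$, $\mathcal{P}_t$.
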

\begin{proof}
	In order to show the first estimate let $\Vert (z,u,q,\gamma,\eta) \Vert_{X_t} = 1$. Then
	\begin{equation*}
		\Vert D\Phi_t(h_0)[(z,u,q,\gamma,\eta)] - D\Phi_t(h)[(z,u,q,\gamma,\eta)] \Vert_{Y_t}
	\end{equation*}
	is given by the maximum of the two terms
	\begin{equation}\label{eq: PhiProof1}
		\Vert D\mathcal{S}^t(0,0)[(u,\eta)] - D\mathcal{S}^t(\bar{v},\xi)[(u,\eta)] \Vert_{\mathcal{C}_t}
	\end{equation}
	and
	\begin{equation}\label{eq: PhiProof2}
		\begin{aligned}
			& \Vert D_v \mathcal{S}^t(0,0)^* \left( \delta_0^* z(0) - \alpha \Psi[z,\gamma] \right)
			- D_v \mathcal{S}^t(\bar{v},\xi)^* \left( \delta_0^* z(0) - \alpha \Psi[z,\gamma] \right) \\
			&- D(D_v\mathcal{S}^t(\bar{v},\xi)^*)[(u,\eta)] \left( \delta_0^* (\bar{w}(0) - w_0) - \alpha \Psi[\bar{w} - \zeta_{\Tilde{w}},y] \right) \Vert_{\mathcal{L}_t}, 
		\end{aligned}
	\end{equation}
	which will be estimated separately. Of course \eqref{eq: PhiProof2} can be estimated from above by 
	\begin{align}
		\tag*{A}\label{termA}
		& \Vert \left( D_v \mathcal{S}^t(0,0)^* - D_v \mathcal{S}^t(\bar{v},\xi)^* \right) \left( \delta_0^* z(0) - \alpha \Psi[z,\gamma] \right) \Vert_{\mathcal{L}_t}\\ 
		\tag*{B}\label{termB} 
		+& \Vert D(D_v\mathcal{S}^t(\bar{v},\xi)^*)[(u,\eta)] \left( \delta_0^*( \bar{w}(0) - w_0) - \alpha \Psi[\bar{w} - \zeta_{\Tilde{w}},y] \right) \Vert_{\mathcal{L}_t}.		
	\end{align}
	Using \eqref{eq: estPsi1} the term \ref{termA} is estimated from above by
	\begin{equation*}
		\Vert D_v \mathcal{S}^t(0,0) - D_v \mathcal{S}^t(\bar{v},\xi) \Vert_{\mathcal{L}(\mathcal{L}_t;\mathcal{C}_t)}
		\underbrace{ \left( \Vert z \Vert_{\mathcal{C}_t} + c (\Vert \gamma \Vert_{\mathcal{Y}_t} + \Vert z \Vert_{\mathcal{C}_t}) \right)}_{\leq \text{constant}},
	\end{equation*}
	where $c$ is an appropriately chosen constant independent of $t$. Using Taylor's theorem the operator norm on the left hand side is estimated from above by 
	\begin{equation*}
		\begin{aligned}
			&\sup_{\tau \in [0,1]} \Vert DD_v \mathcal{S}^t(\tau\bar{v},\tau\xi)[(\bar{v},\xi)] \Vert_{\mathcal{L}(\mathcal{L}_t;\mathcal{C}_t)}\\
			&\leq\sup_{\tau \in [0,1]} \Vert DD_v \mathcal{S}^t(\tau\bar{v},\tau\xi) \Vert_{\mathcal{L}(\mathcal{L}_t \times \mathcal{E}, \mathcal{L}_t ;\mathcal{C}_t)} 
			\Vert (\bar{v},\xi) \Vert_{\mathcal{L}_t \times \mathcal{E}}
			\leq c_{\mathcal{S}} \Vert (\bar{v},\xi) \Vert_{\mathcal{L}_t \times \mathcal{E}},
		\end{aligned}
	\end{equation*}
	where the last estimate is justified by \Cref{lem: EstS}. This takes care not only of \ref{termA} but also of \eqref{eq: PhiProof1}. We turn to \ref{termB} and estimate it from above by
	\begin{equation*}
		\begin{aligned}
			\Vert DD_v \mathcal{S}^t(\bar{v},\xi) \Vert_{\mathcal{L}(\mathcal{L}_t \times \mathcal{E}, \mathcal{L}_t ; \mathcal{C}_t)}
			\left( \Vert \delta_0^* ( \bar{w}(0)-w_0 ) \Vert_{\mathcal{E}}
			+ \Vert \Psi[\bar{w} - \Tilde{w},y] \Vert_{\mathcal{C}_t^*} 
			\right).
		\end{aligned}
	\end{equation*}
	Utilizing \Cref{lem: EstS} and \eqref{eq: estPsi1} it is further estimated by
	\begin{equation*}
		c_{\mathcal{S}} \left( \Vert \bar{w} - \Tilde{w} \Vert_{\mathcal{C}_t} + c \left( \Vert y \Vert_{\mathcal{Y}_t} + \Vert \bar{w} - \Tilde{w} \Vert_{\mathcal{C}_t} \right) \right).
	\end{equation*}
	This concludes the treatment of \eqref{eq: PhiProof2} and the first estimate is shown.
	
	It remains to show that $D \Phi_t(h_0)^{-1}$ exists and satisfies the estimate. For $(z,u,q,\gamma,\eta) \in X_t$ and $(\eta^\prime,\gamma^\prime,f_1,f_2,f_3) \in Y_t$ we have 
	\begin{equation*}
		D\Phi_t(h_0) [(z,u,q,\gamma,\eta)] = (\eta^\prime,\gamma^\prime,f_1,f_2,f_3)
	\end{equation*}
	if and only if it holds $\eta = \eta^\prime,~\gamma = \gamma^\prime$ in addition to
	\begin{equation}\label{eq: OptSysLin}
		\begin{aligned}
			z &= D_v \mathcal{S}^t (0,0)[u]
			+ D_\xi \mathcal{S}^t (0,0)[\eta] - f_1,\\
			q &= D_v \mathcal{S}^t (0,0)^* \left( \delta_0^* z(0) - \alpha \Psi[z,\gamma^\prime] \right) - f_2,\\
			u &= - q + f_3.
		\end{aligned}
	\end{equation}
	Note that one obtains \eqref{eq: OptSysLin} by considering \eqref{eq: OptSysGLQR} in the appendix and setting $\xi = 0$, $\bar{v} = 0$, $y = 0$, $f = D_\xi \mathcal{S}^t(0,0) [\eta] - f_1$, $\gamma = \gamma^\prime$,  $L_1 = -f_2$, and $L_2 = -f_3$. Therefore \Cref{prop: GLQR} yields the unique solvability of \eqref{eq: OptSysLin} and hence the invertibility of $D\Phi_t(h_0)$. The asserted estimate for the inverse follows from \Cref{prop: GLQR} together with \Cref{lem: EstS}.
\end{proof}
The foregoing considerations allow the application of a specific formulation of the implicit function theorem presented in \cite{Hol70}. This version of the well-known theorem ensures time independence of the involved neighborhoods.
\begin{lemma}\label{lem: ImplFct}
	Let $t \in(0,T]$ be arbitrary. Then there exist $\delta,\epsilon>0$ independent of $t$ and three $C^\infty$ functions $\mathcal{W}_t$, $\mathcal{U}_t$ and $\mathcal{P}_t$ such that for any $(\xi,y) \in \mathcal{D}_{\delta}^t$ the triple given by
	$(w,v,p) = (\mathcal{W}_t(\xi,y), \mathcal{U}_t(\xi,y), \mathcal{P}_t(\xi,y))$ 	
	is the unique solution to
	\begin{equation*}
		\Phi_t(w,v,p,y,\xi) = (\xi,y,0,0,0),~~~~~
		\Vert (w,v,p) \Vert_{\mathcal{C}_t \times \mathcal{L}_t \times \mathcal{L}_t} \leq \epsilon.
	\end{equation*}
\end{lemma}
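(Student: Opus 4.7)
The plan is to invoke the quantitative inverse/implicit function theorem from \cite{Hol70} applied to $\Phi_t$ at the base point $h_0 = (\Tilde{w}, 0, 0, 0, 0) \in X_t$. First I would verify that $\Phi_t(h_0) = (0, 0, 0, 0, 0) \in Y_t$: the first two components vanish trivially, the third reduces to $\mathcal{S}^t(0,0) - \Tilde{w} = 0$ by uniqueness of the mild solution (\Cref{cor: BackwWP}) combined with the observation that the nominal trajectory solves \eqref{eq: SWE} with $v = 0$ and $\xi = 0$, the fourth term evaluated at $h_0$ vanishes because $\Tilde{w}(0) = w_0$ (so the $\delta_0^*$-term is zero) and $\Psi[\Tilde{w} - \Tilde{w}, 0] = \Psi[0,0] = 0$, and the fifth is zero by construction.

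Next I would justify that $\Phi_t$ is of class $C^\infty$ on a neighborhood of $h_0$. The first, second and fifth output components are (affine) linear in the input, and therefore smooth. The third component is smooth by \Cref{lem: SmoothS}. The fourth component is a composition of $\mathcal{S}^t$ and its derivative with the bilinear form $\Psi$ and the linear point-evaluation $\delta_0$, all of which are smooth; in particular the map $(\bar{v}, \xi) \mapsto D_v \mathcal{S}^t(\bar{v}, \xi)^*$ is smooth because taking an adjoint is a bounded linear operation on $\mathcal{L}(\mathcal{L}_t; \mathcal{C}_t)$.

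With $\Phi_t(h_0) = 0$, smoothness of $\Phi_t$, and the two uniform-in-$t$ estimates provided by \Cref{lem: estPhi} (namely $\|D\Phi_t(h_0)^{-1}\|_{\mathcal{L}(Y_t; X_t)} \leq c_\Phi$ and the Lipschitz estimate of $D\Phi_t$ near $h_0$ with constant $c_\Phi$ on the ball of radius $\delta_2$), the quantitative version of the inverse function theorem in \cite{Hol70} applies and produces a radius $\epsilon > 0$ depending only on $c_\Phi$ and $\delta_2$ (hence independent of $t$) and a corresponding image radius $\delta > 0$, also independent of $t$, such that $\Phi_t$ restricted to the $\epsilon$-ball around $h_0$ is a $C^\infty$ diffeomorphism onto its image, which contains the $\delta$-ball around $0 \in Y_t$. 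Inverting and reading off the components corresponding to $(w, v, p)$ defines the three smooth functions $\mathcal{W}_t$, $\mathcal{U}_t$, $\mathcal{P}_t$ of $(\xi, y) \in \mathcal{D}_\delta^t$, and uniqueness within the $\epsilon$-ball follows from injectivity of the diffeomorphism.

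The main obstacle is to ensure that the radii $\epsilon$ and $\delta$ are genuinely independent of $t \in (0, T]$. Standard formulations of the implicit function theorem would produce neighborhoods whose size depends on pointwise data of the derivatives; this is precisely why \Cref{lem: estPhi} was established with constants independent of $t$ and why the quantitative version from \cite{Hol70} is invoked rather than the textbook one. Once the $t$-uniform hypotheses are available the rest is a mechanical transcription of the conclusion of that theorem.
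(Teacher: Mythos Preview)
Your proposal is correct and follows essentially the same route as the paper: both apply the quantitative result from \cite{Hol70} to $\Phi_t$ at the base point $h_0=(\Tilde{w},0,0,0,0)$, feeding in the $t$-uniform bounds from \Cref{lem: estPhi} to obtain $t$-independent radii. The only point the paper treats more carefully is that the theorem in \cite{Hol70} as stated yields merely continuity of the implicit map; the paper remarks explicitly that the $C^\infty$ regularity follows by comparing with standard proofs (e.g.\ \cite[Thm.~4.E]{Zei95AMS109}), since the passage from regularity of $P$ to regularity of $F$ does not shrink the neighborhoods further---you gloss over this step.
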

\begin{proof}
	The proof is conducted by an application of \cite{Hol70}. The assumptions denoted by (i)-(vii) are checked using the notation given in the reference. We define $P\colon Y_t \times X_t \rightarrow Y_t$ as $P(s,h) = \Phi_t(h) - s$ and set $h_0$ as in \Cref{lem: estPhi}.
	\begin{enumerate}[label=(\roman*)]
		\item It holds $P(0,h_0) = \Phi_t(\Tilde{w},0,0,0,0) = 0$.
		\item Further $D_h P(s,h) = D\Phi_t(h)$ exists and is of class $C^\infty$. This is due to the smoothness of $\mathcal{S}^t$ established in \Cref{lem: SmoothS}.
		\item We set $k_1 = c_{\Phi}$, where $c_{\Phi}$ is the constant from \Cref{lem: estPhi}. By the same lemma we obtain existence of the inverse and the bound
		\begin{equation*}
			\Vert D_hP(0,h_0)^{-1} \Vert_{\mathcal{L}(Y_t;X_t)}
			= \Vert D \Phi_t(h_0)^{-1} \Vert_{\mathcal{L}(Y_t;X_t)}
			\leq k_1. 
		\end{equation*}
		\item Define $\epsilon = \min ( \delta_2,(4c_{\Phi}^2)^{-1} )$, $\delta = (2c_{\Phi})^{-1} \epsilon$ and
		\begin{equation*}
			S = \{ (s,h) \in Y_t \times X_t \colon \Vert s \Vert_{Y_t} < \delta,~ \Vert h - h_0 \Vert_{X_t} < \epsilon \}.
		\end{equation*}
		\item We define $g_1(x,y) = c_{\Phi} y$. Due to \Cref{lem: estPhi} for $(s,h) \in S$ it holds
		\begin{equation*}
			\begin{aligned}
				\Vert D_h P(s,h) - D_h P(0,h_0) \Vert_{\mathcal{L}(X_t;Y_t)}
				&= \Vert D \Phi(h) - D \Phi(h_0) \Vert_{\mathcal{L}(X_t;Y_t)}\\
				\leq & c_{\Phi} \Vert h - h_0 \Vert_{X_t} 
				= g_1(\Vert s \Vert_{Y_t}, \Vert h - h_0 \Vert_{X_t}).
			\end{aligned}
		\end{equation*}
		\item Define $g_2(x) = x$. Then
		\begin{equation*}
			\Vert P(s,h_0) \Vert_{Y_t}
			= \Vert \Phi_t(h_0) - s \Vert_{Y_t}
			= \Vert s \Vert_{Y_t}
			= g_2(\Vert s \Vert_{Y_t}).
		\end{equation*}
		\item Set $\alpha = \frac{1}{2}$. Then
		\begin{equation*}
			\begin{aligned}
				k_1 g_1(\delta,\epsilon)
				= c_{\Phi}^2 \epsilon \leq \frac{1}{4} < \alpha < 1
				~~~~~~\text{and}~~~~~~
				k_1 g_2(\delta) 
				= c_{\Phi} \delta = \frac{1}{2} \epsilon = \epsilon (1 - \alpha).
			\end{aligned}
		\end{equation*}
	\end{enumerate}
	Since all assumptions are fulfilled, the theorem yields an operator 
	\begin{equation*}
		F \colon \{ s \in Y_t \colon \Vert s \Vert_{Y_t} < \delta \}
		\rightarrow
		\{ h \in X_t \colon \Vert h - h_0 \Vert_{X_t} \leq \epsilon \}
	\end{equation*}
	such that $P(s,F(s)) = 0 $ which is equivalent to $\Phi_t(F(s)) = s $, where $F(s)$ is the unique element with this property lying in $\{ h \in X_t \colon \Vert h - h_0 \Vert_{X_t} \leq \epsilon \}$. Now for any $(\xi,y)$ satisfying $\Vert (\xi,y) \Vert_{\mathcal{E} \times \mathcal{Y}_t} < \delta$ we set $\mathcal{W}_t(\xi,y) = w$, $\mathcal{U}_t(\xi,y) = u$ and $\mathcal{P}_t(\xi,y) = p$ where $(w,u,p)$ is uniquely determined via
	$F(\xi,y,0,0,0) = (w,u,p,y,\xi)$.	
	
	We note that strictly speaking the theorem presented in \cite{Hol70} only yields continuity of $F$. However, comparing the assumptions and the proof with the ones of more classical versions (see for example \cite[Theorem 4.E]{Zei95AMS109}) shows that the neighborhoods are only adjusted within the application of the fixed point argument. The part of the proof that shows that differentiability of order $k$ of $P$ carries over to $F$ does not alter the neighborhoods at all. Hence the assertion of higher regularity holds for the chosen constants if $P$ is of class $C^\infty$. With \Cref{lem: SmoothS} this regularity is ensured for $\mathcal{S}^t$ and carries over to $\Phi_t$ and $P$. 
\end{proof}	
The constructed mappings $\mathcal{W}_t$, $\mathcal{U}_t$, and $\mathcal{P}_t$ map sufficiently small data $(\xi,y)$ to a unique state trajectory, control, and adjoint state satisfying a norm bound and the optimality condition established for \eqref{eq: OCP}. The following proposition ensures that they are actually the unique optimal triple.
\begin{proposition}
	Let $t \in (0,T]$ be arbitrary and let $\delta$, $\epsilon$ be the constants from \Cref{lem: ImplFct}. Then there exists a constant $\delta_3 \in (0,\min\left( \delta,\epsilon, \delta_2 \right)]$ such that for any $(\xi,y) \in \mathcal{D}_{\delta_3}^t$ the associated optimal control problem \eqref{eq: OCP} admits exactly one solution. The optimal triple is given by $\mathcal{W}_t(\xi,y)$, $\mathcal{U}_t(\xi,y)$, and $\mathcal{P}_t(\xi,y)$. 
\end{proposition}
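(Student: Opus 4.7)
The plan is to combine the existence of a minimizer from \Cref{prop: exMin}, the first order necessary condition from \Cref{prop: optCond}, and the uniqueness statement of \Cref{lem: ImplFct}. The whole point is that any minimizer must satisfy $\Phi_t(\bar w,\bar v,p,y,\xi) = (\xi,y,0,0,0)$, so provided we can force the associated triple $(\bar w,\bar v,p)$ into the neighborhood where $\mathcal{W}_t$, $\mathcal{U}_t$, $\mathcal{P}_t$ are defined as the unique such solutions, we are done.

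First I would fix a data pair $(\xi,y) \in \mathcal{D}_{\delta_3}^t$ for a radius $\delta_3 \leq \min(\delta,\epsilon,\delta_2)$ still to be chosen. Applying \Cref{prop: exMin} yields the existence of at least one minimizing pair $(\bar w,\bar v) \in \mathcal{C}_t \times \mathcal{L}_t$, together with the linear-in-data bound
\begin{equation*}
	\max\bigl(\Vert \bar w - \Tilde w \Vert_{\mathcal{C}_t}, \Vert \bar v \Vert_{\mathcal{L}_t}\bigr)
	\leq c_\mathrm{Min} \max\bigl(\Vert \xi \Vert_{\mathcal{E}}, \Vert y \Vert_{\mathcal{Y}_t}\bigr).
\end{equation*}
Denoting by $p$ the adjoint state associated with $(\bar w,\bar v)$, \Cref{lem: estAdj} gives an analogous estimate $\Vert p \Vert_{\mathcal{L}_t} \leq c_p \max(\Vert \xi \Vert_{\mathcal{E}}, \Vert y \Vert_{\mathcal{Y}_t})$. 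The optimality condition $\bar v = -p$ from \Cref{prop: optCond}, together with the very definition of $p$ and the defining equation of $\mathcal{S}^t(\bar v,\xi) = \bar w$, translates precisely into $\Phi_t(\bar w,\bar v,p,y,\xi) = (\xi,y,0,0,0)$, so any minimizer produces a solution of this nonlinear system.

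Next I would choose $\delta_3 \in (0,\min(\delta,\epsilon,\delta_2)]$ small enough that
\begin{equation*}
	\max(c_\mathrm{Min},c_p)\,\delta_3 \leq \epsilon.
\end{equation*}
With this choice, combining the three estimates above shows that for every $(\xi,y) \in \mathcal{D}_{\delta_3}^t$ and every minimizing pair $(\bar w,\bar v)$ with associated adjoint $p$, the tuple $(\bar w,\bar v,p,y,\xi)$ lies in the neighborhood of $h_0 = (\Tilde w,0,0,0,0)$ on which \Cref{lem: ImplFct} guarantees a unique solution of $\Phi_t = (\xi,y,0,0,0)$. Hence $(\bar w,\bar v,p)$ must coincide with $(\mathcal{W}_t(\xi,y),\mathcal{U}_t(\xi,y),\mathcal{P}_t(\xi,y))$, which yields both existence (already known) and uniqueness of the optimal pair.

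I do not anticipate a serious obstacle: the proof is essentially a bookkeeping exercise reconciling two neighborhoods. The most delicate point is making sure that the constants $c_\mathrm{Min}$ and $c_p$ supplied by \Cref{prop: exMin} and \Cref{lem: estAdj}, both of which are \emph{independent of} $t$, really allow a $t$-independent choice of $\delta_3$; this is precisely why those two lemmas were stated with uniform constants, and why \Cref{lem: ImplFct} was proved via Hollis' version of the implicit function theorem with $t$-independent neighborhoods. Once this uniformity is in hand, the inclusion into the uniqueness neighborhood is immediate.
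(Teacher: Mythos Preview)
Your proposal is correct and follows essentially the same approach as the paper: invoke \Cref{prop: exMin} for existence and the linear-in-data estimate on $(\bar w,\bar v)$, \Cref{lem: estAdj} for the adjoint bound, observe that any minimizing triple solves $\Phi_t=(\xi,y,0,0,0)$ via \Cref{prop: optCond}, and then shrink $\delta_3$ (uniformly in $t$) so that the triple lands in the uniqueness neighborhood of \Cref{lem: ImplFct}. Your emphasis on the $t$-independence of $c_{\mathrm{Min}}$, $c_p$, $\delta$, and $\epsilon$ is exactly the point the paper stresses as well.
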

\begin{proof}
	For now set $\delta_3 = \min\left( \delta,\epsilon, \delta_2 \right)$ and assume $\Vert (\xi,y) \Vert_{\mathcal{E} \times \mathcal{Y}_t} < \delta_3$. According to \Cref{lem: ImplFct} the triple $(\mathcal{W}_t(\xi,y), \mathcal{U}_t(\xi,y), \mathcal{P}_t(\xi,y))$ is the only one that satisfies the optimality condition established for \eqref{eq: OCP} and lies in 
	\begin{equation*}
		\mathcal{M} = \{ (w,v,p) \in \mathcal{C}_t \times \mathcal{L}_t \times \mathcal{L}_t \colon
		\Vert w - \Tilde{w} \Vert_{\mathcal{C}_t}, \Vert v \Vert_{\mathcal{L}_t}, \Vert p \Vert_{\mathcal{L}_t} \leq \epsilon \}.
	\end{equation*}
	Due to \Cref{prop: exMin} there exists at least one minimizer and further any minimizer $(\bar{w},\bar{v})$ satisfies 
	\begin{equation*}
		\max \left( \Vert \bar{w} - \Tilde{w} \Vert_{\mathcal{C}_t},
		\Vert \bar{v} \Vert_{\mathcal{L}_t} \right) 
		\leq c_\mathrm{Min} \max\left( \Vert \xi \Vert_\mathcal{E}, \Vert y \Vert_{\mathcal{Y}_t} \right)
		< c_\mathrm{Min} \delta_3.
	\end{equation*}
	With \Cref{lem: estAdj} it follows that the associated adjoint $p$ satisfies
	\begin{equation*}
		\Vert p \Vert_{\mathcal{L}_t}
		\leq c_p \max\left( \Vert \xi \Vert_\mathcal{E}, \Vert y \Vert_{\mathcal{Y}_t} \right)
		< c_p \delta_3.
	\end{equation*}
	Hence a decrease of $\delta_3$ ensures that any minimizing triple lies in $\mathcal{M}$. Since the only element of $\mathcal{M}$ satisfying the optimality condition is given by $(\mathcal{W}_t(\xi,y), \mathcal{U}_t(\xi,y), \mathcal{P}_t(\xi,y))$ and existence of a minimizer was shown, the assertion holds.
\end{proof}
We finally obtain regularity of the value function in space and output.
\begin{corollary}\label{cor: ValFunReg}
	For any fixed $t \in (0,T]$ and $(\xi,y) \in \mathcal{D}_{\delta_3}^t$ the value function is given as
	\begin{equation}\label{eq: ValFunSmooth}
		\mathcal{V}(t,\xi,y) = J(\mathcal{W}_t(\xi,y),\mathcal{U}_t(\xi,y);t,y).
	\end{equation}
	In particular the value function is of class $C^\infty$ with respect to $(\xi,y)$ in $\mathcal{D}_{\delta_3}^t$.
\end{corollary}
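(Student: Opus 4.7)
The proof is essentially a composition argument and I would organize it in three short steps.

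First, I would invoke the preceding proposition. For every $(\xi,y) \in \mathcal{D}_{\delta_3}^t$, the optimal control problem \eqref{eq: OCP} admits a unique minimizing pair, and that pair is precisely $(\mathcal{W}_t(\xi,y), \mathcal{U}_t(\xi,y))$. Since the constraint $e^t(\mathcal{W}_t(\xi,y), \mathcal{U}_t(\xi,y), \xi) = 0$ holds by construction (this is one of the components of $\Phi_t = (\xi,y,0,0,0)$ in \Cref{lem: ImplFct}), the infimum defining $\mathcal{V}(t,\xi,y)$ is attained at this admissible pair, which immediately yields the representation formula \eqref{eq: ValFunSmooth}.

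Second, I would argue that $J(\cdot,\cdot \,;t,\cdot)$ is $C^\infty$ as a mapping from $\mathcal{C}_t \times \mathcal{L}_t \times \mathcal{Y}_t$ to $\mathbb{R}$. This follows by inspecting the three terms in \eqref{eq: OCP}: the initial-value penalty $\tfrac{1}{2}\|w(0) - w_0\|_\mathcal{E}^2$ is the composition of the bounded linear evaluation $\delta_0$ with a quadratic form; the control penalty $\tfrac{1}{2}\|v\|_{\mathcal{L}_t}^2$ is the squared Hilbert norm on $\mathcal{L}_t$; and the output mismatch $\tfrac{\alpha}{2}\|y - C(w - \Tilde{w})\|_{\mathcal{Y}_t}^2$ is the squared norm of an affine map in $(w,y)$. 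Each is therefore smooth in the strong (Fr\'echet) sense.

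Finally, by \Cref{lem: ImplFct} the maps $\mathcal{W}_t \colon \mathcal{D}_{\delta_3}^t \to \mathcal{C}_t$ and $\mathcal{U}_t \colon \mathcal{D}_{\delta_3}^t \to \mathcal{L}_t$ are of class $C^\infty$. Composing the smooth map $(\xi,y) \mapsto (\mathcal{W}_t(\xi,y), \mathcal{U}_t(\xi,y), y)$ with the smooth cost functional $J(\cdot,\cdot\,;t,\cdot)$ and applying the chain rule yields that $\mathcal{V}(t,\cdot,\cdot)$ is $C^\infty$ on $\mathcal{D}_{\delta_3}^t$.

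There is no substantive obstacle here: all the hard work — existence, uniqueness, and smooth parametric dependence of the optimal triple — was done in \Cref{prop: exMin} and \Cref{lem: ImplFct}. The only small point to be careful about is verifying that the representation \eqref{eq: ValFunSmooth} holds as an \emph{equality} (and not merely as an upper bound); this is precisely what the uniqueness statement of the preceding proposition provides.
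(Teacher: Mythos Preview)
Your proof is correct and matches the paper's approach: the paper states this corollary without proof, treating it as an immediate consequence of the preceding proposition (which identifies the unique minimizer with $(\mathcal{W}_t(\xi,y),\mathcal{U}_t(\xi,y))$) together with the $C^\infty$ regularity of $\mathcal{W}_t$, $\mathcal{U}_t$ from \Cref{lem: ImplFct} and the obvious smoothness of $J$. Your three-step write-up is exactly the fleshing out of what the paper leaves implicit; the only minor citation slip is that the uniqueness input you need is the unnamed proposition immediately preceding the corollary rather than \Cref{prop: exMin}.
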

In the following we derive bounds for the partial derivatives of $\mathcal{W}_t$, $\mathcal{U}_t$, and $\mathcal{P}_t$ which will then be carried over to derivatives of $\mathcal{V}$. While such Fr\'echet derivatives are by definition linear bounded operators, we show that the bounds are in fact uniform in $t$. A linearization of $\Phi_t$ characterizes $D\mathcal{W}_t$ , $D\mathcal{U}_t$, $D\mathcal{P}_t$ as optimal triples associated with linear quadratic optimal control problems. As such they satisfy the desired estimates. A general linear quadratic OCP is analyzed in the appendix. Here we show that the derivatives of $\mathcal{W}_t$, $\mathcal{U}_t$, $\mathcal{P}_t$ solve problems of such type. 
\begin{lemma}\label{lem: DerSolLQR}
	Let $t \in (0,T]$ be fixed and let $(\xi,y) \in \mathcal{D}_{\delta_3}^t$ and $\mu \in \mathcal{Y}_t$. For $i = 1,2,3$ let $\eta_i \in \mathcal{E}$. There exists $c_\mathcal{W} > 0$ independent of $t$ such that for $i = 1,2,3$ it holds
	\begin{equation*}
		\begin{aligned}
			&\max \left( \left\Vert D_{\xi^i}^i \mathcal{W}_t(\xi,y) [\eta_1,...,\eta_i] \right\Vert_{\mathcal{C}_t}, \right.
			\left\Vert D_{\xi^i}^i \mathcal{U}_t(\xi,y) [\eta_1,...,\eta_i] \right\Vert_{\mathcal{L}_t}, 
			\left.
			\left\Vert D_{\xi^i}^i \mathcal{P}_t(\xi,y) [\eta_1,...,\eta_i] \right\Vert_{\mathcal{L}_t} \right)\\  
			&\leq c_\mathcal{W} \prod_{k=1}^i \Vert \eta_k \Vert_\mathcal{E}
		\end{aligned}
	\end{equation*}
	and for $j = 0,1,2$ it holds
	\begin{equation*}
		\begin{aligned}
			&\max \left( \left\Vert D_{y \xi^j}^{1+j} \mathcal{W}_t(\xi,y) [\mu, \eta_1,...,\eta_j] \right\Vert_{\mathcal{C}_t}, \right. 
			\left\Vert D_{y \xi^j}^{1+j} \mathcal{U}_t(\xi,y) [\mu, \eta_1,...,\eta_j] \right\Vert_{\mathcal{L}_t},\\
			&\left.
			\left\Vert D_{y \xi^j}^{1+j} \mathcal{P}_t(\xi,y)
			[\mu, \eta_1,...,\eta_j] \right\Vert_{\mathcal{L}_t} \right)  
			\leq c_\mathcal{W} \Vert \mu \Vert_{\mathcal{Y}_t} \prod_{k=1}^j \Vert \eta_k \Vert_\mathcal{E}.
		\end{aligned}
	\end{equation*}
\end{lemma}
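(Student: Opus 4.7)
My plan is to obtain the estimates by successively differentiating the implicit identity
$\Phi_t(\mathcal{W}_t(\xi,y), \mathcal{U}_t(\xi,y), \mathcal{P}_t(\xi,y), y, \xi) = (\xi, y, 0, 0, 0)$
supplied by \Cref{lem: ImplFct}, and to exploit the fact already observed in the proof of \Cref{lem: estPhi} that the linearization of $\Phi_t$ at any admissible point coincides with the optimality system of a linear quadratic control problem of the type treated in the appendix. Both \Cref{prop: GLQR} and \Cref{lem: EstS} deliver bounds that are uniform in $t$, which is precisely what produces a constant $c_\mathcal{W}$ independent of $t$.

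For the first-order derivatives I would differentiate the implicit identity once with respect to $\xi$ in direction $\eta_1$. Writing the result component by component, the triple $(z,u,q) = (D_\xi \mathcal{W}_t(\xi,y)[\eta_1], D_\xi \mathcal{U}_t(\xi,y)[\eta_1], D_\xi \mathcal{P}_t(\xi,y)[\eta_1])$ solves a linear system whose structure is precisely that of \eqref{eq: OptSysLin}, with an inhomogeneity entirely driven by $\eta_1$ through $D_\xi \mathcal{S}^t$ and its adjoint. Applying \Cref{prop: GLQR} together with the uniform bound on $D_\xi \mathcal{S}^t$ from \Cref{lem: EstS} gives $\|(z,u,q)\|_{\mathcal{C}_t \times \mathcal{L}_t \times \mathcal{L}_t} \leq c \|\eta_1\|_\mathcal{E}$ with $c$ independent of $t$. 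The case $j=0$ of the second estimate is handled analogously, differentiating instead in the direction $\mu \in \mathcal{Y}_t$; the only change is that the forcing acquires a $\Psi[\,\cdot\,,\mu]$ contribution, which is controlled via \eqref{eq: estPsi1}.

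For the higher-order estimates I proceed by induction on the total order $k$ of differentiation. Differentiating the implicit identity $k$ times and isolating the $k$-th derivatives of $(\mathcal{W}_t, \mathcal{U}_t, \mathcal{P}_t)$ via the Fa\`{a} di Bruno formula yields a linear system of exactly the LQR form covered by \Cref{prop: GLQR}, but with an inhomogeneity consisting of a finite sum of multilinear terms, each obtained by contracting a partial derivative of $\mathcal{S}^t$ (or of $D_v \mathcal{S}^{t*}$) against lower-order derivatives of $\mathcal{W}_t$, $\mathcal{U}_t$, $\mathcal{P}_t$. By the induction hypothesis these lower-order derivatives admit the desired multilinear bounds in $\|\eta_1\|_\mathcal{E}, \dots, \|\eta_{k-1}\|_\mathcal{E}$ (and, in the mixed case, a single factor $\|\mu\|_{\mathcal{Y}_t}$), while \Cref{lem: EstS} controls the relevant partial derivatives of $\mathcal{S}^t$ uniformly in $t$. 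Combining these estimates with \Cref{prop: GLQR} closes the induction and produces the asserted bounds with a uniform constant $c_\mathcal{W}$.

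The main obstacle is purely a bookkeeping issue: one has to verify that the Fa\`{a} di Bruno expansion at total order at most three never calls for a derivative of $\mathcal{S}^t$ of order exceeding five. This is in fact automatic, because the fourth component of $\Phi_t$ already contains one derivative of $\mathcal{S}^t$ through $D_v \mathcal{S}^{t*}$, so three additional differentiations produce at most a fourth-order derivative of $\mathcal{S}^t$, well within the range covered by \Cref{lem: EstS}. Once this is checked, no further genuinely new analysis is required and the induction terminates after three steps, giving exactly the cases $i \in \{1,2,3\}$ and $j \in \{0,1,2\}$ claimed in the lemma.
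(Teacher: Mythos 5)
Your proposal is correct and follows essentially the same route as the paper: differentiate the implicit identity $\Phi_t(\mathcal{W}_t,\mathcal{U}_t,\mathcal{P}_t,y,\xi)=(\xi,y,0,0,0)$, identify the resulting linear system as an instance of the optimality system \eqref{eq: OptSysGLQR} (with $\bar v=\mathcal{U}_t(\xi,y)$ and data $f$, $L_1$ built from derivatives of $\mathcal{S}^t$), and invoke the $t$-uniform bounds of \Cref{prop: GLQR} and \Cref{lem: EstS}; the paper treats higher orders by the same iteration you make explicit via induction. The only minor imprecisions are that the first-order system is \eqref{eq: OptSysGLQR} at the point $(\mathcal{U}_t(\xi,y),\xi)$ rather than \eqref{eq: OptSysLin} (which is the linearization at $(\bar v,\xi)=(0,0)$), and that one should note $\delta_3$ may be decreased so that $\delta_3\le\delta^\prime$ before applying \Cref{prop: GLQR}.
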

\begin{proof}
	Again we only show one of the estimates to illustrate the technique. The rest follow analogously in an iterative fashion. Without loss of generality we assume $\delta_3 < \delta^\prime$, where $\delta^\prime$ is the constant introduced in the discussion of the general linear quadratic optimal control problem in the appendix, cf. \Cref{prop: GLQR}. By construction it holds
	\begin{equation}\label{eq: Phi}
		\Phi_t(\mathcal{W}_t(\xi,y),\mathcal{U}_t(\xi,y),\mathcal{P}_t(\xi,y),y,\xi)
		= (\xi,y,0,0,0).
	\end{equation}
	Taking a derivative with respect to $\xi$ in direction $\eta$ and applying the chain rule yields that
	\begin{equation*}
		(D_\xi \mathcal{W}_t (\xi,y) [\eta], D_\xi \mathcal{U}_t (\xi,y) [\eta], D_\xi \mathcal{P}_t (\xi,y) [\eta] )
	\end{equation*}
	satisfies the optimality system \eqref{eq: GLQR} associated with a linear quadratic optimal control problem analyzed in the appendix. The specific system is obtained by setting $\xi = \xi$, $\bar{v} = \mathcal{U}_t(\xi,y)$, $y = y$, $f = D_\xi \mathcal{S}^t(\bar{v},\xi) [\eta]$, $\gamma = 0$, $L_1 = (D_{v\xi}^2 \mathcal{S}(\bar{v},\xi) [\eta])^* (\delta_0^* (\bar{w}(0) - w_0) - \alpha \Psi[\bar{w} - \Tilde{w},y])$, and $L_2 = 0$. The assertion follows using standard estimates together with \Cref{prop: SolStateEq}, \Cref{lem: EstS}, and \Cref{prop: GLQR}.
\end{proof}
We are now able to derive norm bounds for the partial derivatives of $\mathcal{V}$.
\begin{lemma}\label{lem: ValFunDerEst}
	Let $t \in (0,T]$, $(\xi,y) \in \mathcal{D}_{\delta_3}^t$ and $\mu \in \mathcal{Y}_t$. For $i = 1,2,3,$ let $\eta_i \in \mathcal{E}$. There exists $c_\mathcal{V} > 0$ such that for $i=1,2,3$ and $j =1,2$ it holds
	\begin{equation*}
		\begin{aligned}
			\left\vert D_{\xi^i}^i \mathcal{V}(t,\xi,y) [\eta_1,...,\eta_i] \right\vert
			&\leq c_\mathcal{V} \prod_{k=1}^i \Vert \eta_k \Vert_\mathcal{E},\\
			\left\vert D_{y \xi^j}^{1+j} \mathcal{V}(t,\xi,y)[\mu,\eta_1,...,\eta_j] \right\vert
			&\leq c_\mathcal{V} \, \Vert \mu \Vert_{\mathcal{Y}_t} \prod_{k=1}^j \Vert \eta_k \Vert_\mathcal{E}.
		\end{aligned}
	\end{equation*}
\end{lemma}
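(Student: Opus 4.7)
The starting point is the representation $\mathcal{V}(t,\xi,y) = J(\mathcal{W}_t(\xi,y), \mathcal{U}_t(\xi,y); t, y)$ provided by \Cref{cor: ValFunReg}. All asserted bounds will follow from the chain rule combined with the derivative estimates of \Cref{lem: DerSolLQR}, once we exploit the crucial structural observation that $J(\cdot,\cdot;t,\cdot)$ is a polynomial of degree two in $(w,v,y)$, so $D^k J \equiv 0$ for every $k \geq 3$.

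Because of this vanishing, a Fa\`a di Bruno expansion of $D_{\xi^i}^i \mathcal{V}$ (resp.~$D_{y\xi^j}^{1+j} \mathcal{V}$) produces a \emph{finite} sum of terms, each of the form $D_w^a D_v^b D_y^c J(\mathcal{W}_t, \mathcal{U}_t; t,y)$ with $a+b+c \in \{1,2\}$, applied to a tuple of partial derivatives of $\mathcal{W}_t$ and $\mathcal{U}_t$ (of total order at most the order of the $\mathcal{V}$-derivative). My plan is to bound the ``outer'' factors $D_w^a D_v^b D_y^c J$ and the ``inner'' factors separately, taking care to keep all constants independent of $t$.

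For the outer factors: the second-order derivatives $D^2 J$ are constant bilinear forms built from the inner products on $\mathcal{E}$, $\mathcal{L}_t$, $\mathcal{Y}_t$, the operator $C$, and $\alpha$, and are uniformly bounded by a constant depending only on $\alpha$, $T$, $c_C$. The first-order derivatives, when evaluated at the optimal point, read
\begin{align*}
D_w J [z] &= \bigl((\mathcal{W}_t - \tilde{w})(0), z(0)\bigr)_\mathcal{E}
- \alpha \int_0^t \bigl(y - C(\mathcal{W}_t - \tilde{w}), Cz\bigr)_Y \, \mathrm{d}s, \\
D_v J [u] &= (\mathcal{U}_t, u)_{\mathcal{L}_t}, \qquad
D_y J [\mu] = \alpha \bigl(y - C(\mathcal{W}_t - \tilde{w}), \mu\bigr)_{\mathcal{Y}_t},
\end{align*}
where we used $w_0 = \tilde{w}(0)$. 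Their operator norms are therefore controlled by $\|\mathcal{W}_t - \tilde{w}\|_{\mathcal{C}_t}$, $\|\mathcal{U}_t\|_{\mathcal{L}_t}$ and $\|y\|_{\mathcal{Y}_t}$, each of which is bounded by $c \max(\|\xi\|_\mathcal{E}, \|y\|_{\mathcal{Y}_t}) \leq c\,\delta_3$ uniformly in $t$ by \Cref{prop: SolStateEq} and \Cref{prop: exMin}.

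For the inner factors, \Cref{lem: DerSolLQR} supplies exactly the uniform-in-$t$ multilinear bounds needed: $\|D_{\xi^i}^i \mathcal{W}_t[\eta_1,\dots,\eta_i]\|_{\mathcal{C}_t} + \|D_{\xi^i}^i \mathcal{U}_t[\eta_1,\dots,\eta_i]\|_{\mathcal{L}_t} \leq c_\mathcal{W} \prod_k \|\eta_k\|_\mathcal{E}$, and analogously for the mixed derivatives involving $\mu$. Multiplying the outer and inner estimates term by term and summing over the finite Fa\`a di Bruno expansion yields $|D_{\xi^i}^i \mathcal{V}(t,\xi,y)[\eta_1,\dots,\eta_i]| \leq c_\mathcal{V} \prod_k \|\eta_k\|_\mathcal{E}$ and the analogous mixed bound, with a $c_\mathcal{V}$ depending only on the previously established constants and hence independent of $t$. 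No step is genuinely difficult; the mildly tedious piece is bookkeeping the handful of terms appearing for the third-order derivative, which is manageable precisely because $D^3 J$ vanishes.
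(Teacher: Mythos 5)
Your proposal is correct and follows essentially the same route as the paper, whose proof is exactly the one-line instruction to differentiate the representation $\mathcal{V}(t,\xi,y) = J(\mathcal{W}_t(\xi,y),\mathcal{U}_t(\xi,y);t,y)$ and combine standard estimates with \Cref{prop: SolStateEq} and \Cref{lem: DerSolLQR}; you have merely made explicit the bookkeeping (the quadratic structure of $J$, hence the finite chain-rule expansion, and the uniform-in-$t$ bounds on the outer and inner factors) that the paper leaves implicit.
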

\begin{proof}
	The proof follows by taking appropriate derivatives of \eqref{eq: ValFunSmooth} before applying standard estimates together with the estimates from \Cref{prop: SolStateEq} and \Cref{lem: DerSolLQR}.	
\end{proof}
%
\section{Properties of the second derivative}\label{sec: CharSecDer}
%
This section is dedicated to the analysis of the second order spatial derivative of the value function, i.e., the second derivative of the mapping
\begin{equation*}
	\left\{ \xi \in \mathcal{E} ~\colon~ \Vert \xi \Vert_{\mathcal{E}} < \delta_3 \right\} 
	\longrightarrow \mathbb{R},
	~~~~~~~~
	\xi \mapsto \mathcal{V}(t,\xi,y),
\end{equation*}
where $t \in [0,T]$ and $y \in \mathcal{Y}_t$ satisfying $\Vert y \Vert_{\mathcal{Y}_t} < \delta_3$ are fixed. We first clarify some notation. By definition of the Fr\'echet derivative we have $D_\xi \mathcal{V}(t,\xi,y) \in \mathcal{L}(\mathcal{E};\mathbb{R}) = \mathcal{E}^*$ and $D_{\xi\xi}^2 \mathcal{V}(t,\xi,y) \in \mathcal{L}(\mathcal{E};\mathcal{E}^*)$. We apply Riesz to identify the dual of $\mathcal{E}$ with $\mathcal{E}$ and obtain $D_\xi \mathcal{V}(t,\xi,y) \in \mathcal{E}$ and $D_{\xi\xi}^2 \mathcal{V}(t,\xi,y) \in \mathcal{L}(\mathcal{E})$. Reminiscent of the finite-dimensional case $D_\xi \mathcal{V}$ and $D_{\xi\xi}^2 \mathcal{V}$ will be referred to as the \textit{gradient} and the \textit{Hessian} of the value function.
\begin{remark}
	Since we have $\mathcal{E} = H^1_0(\Omega) \times L^2(\Omega)$, we implicitly identify the dual of $H^1_0(\Omega)$ with the space itself. While this is contrary to the common identification $ H^1_0(\Omega)^* \simeq H^{-1}(\Omega)$, it exactly fits the scalar products appearing in the following arguments.  
\end{remark}
First we show that locally the second derivative is an isomorphism before deploying arguments based on Riccati equations to analyze the time regularity of its inverse.
%
\subsection{Local coercivity}\label{subsec: LocCoerc}
%
We first show that for all $t \in [0,T]$ the bilinear form associated with the Hessian evaluated along the model is coercive. For sufficiently small data this property is then carried over to $D_{\xi\xi}^2\mathcal{V}(t,\xi,y)$ using continuity arguments. Since all results are clear for $t = 0$, we only discuss $t \in (0,T]$.
The following lemma characterizes the derivative along the model in terms of $\mathcal{W}_t$ and $\mathcal{U}_t$.
\begin{lemma}\label{lem: CharSecDerV}
	For all $t \in (0,T]$ and $\eta \in \mathcal{E}$ satisfying $\Vert \xi \Vert_{\mathcal{E}} < \delta_3$ it holds 
	\begin{equation*}
		\left( D_{\xi\xi}^2 \mathcal{V}(t,0,0) \, \eta, \eta \right)_{\mathcal{E}}
		= \Vert D_\xi \mathcal{W}_t (0,0) [\eta] \, (0) \Vert_\mathcal{E}^2
		+ \Vert D_\xi \mathcal{U}_t (0,0) [\eta] \Vert_{\mathcal{L}_t}^2 
		+ \Vert C D_\xi \mathcal{W}_t (0,0) [\eta] \Vert_{\mathcal{Y}_t}^2. 
	\end{equation*}
\end{lemma}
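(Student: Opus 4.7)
The plan is to use the representation $\mathcal{V}(t,\xi,y) = J(\mathcal{W}_t(\xi,y), \mathcal{U}_t(\xi,y);t,y)$ furnished by \Cref{cor: ValFunReg} and to differentiate this composition twice in $\xi$ via the chain rule. The first thing to establish is the value of $\mathcal{W}_t$ and $\mathcal{U}_t$ at the base point $(\xi,y) = (0,0)$: since $J(\Tilde{w},0;t,0) = 0$ and $J \geq 0$, the pair $(\Tilde{w},0)$ minimizes \eqref{eq: OCP}, so by the local uniqueness built into \Cref{lem: ImplFct} one has $\mathcal{W}_t(0,0) = \Tilde{w}$ and $\mathcal{U}_t(0,0) = 0$. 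In particular $\mathcal{W}_t(0,0)(0) - w_0 = 0$ and $-C(\mathcal{W}_t(0,0) - \Tilde{w}) = 0$.

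Next, since $J$ is quadratic in $(w,v)$ with no coupling between the two arguments, its partial derivatives are straightforward to read off. The crucial observation is that at $(\Tilde{w},0;t,0)$ both $D_w J$ and $D_v J$ vanish and that the mixed second partial $D_{wv}^2 J$ is identically zero. Applying the chain rule to the composition therefore yields
\begin{equation*}
D_{\xi\xi}^2 \mathcal{V}(t,0,0)[\eta,\eta] = D_{ww}^2 J\bigl[D_\xi \mathcal{W}_t(0,0)[\eta], D_\xi \mathcal{W}_t(0,0)[\eta]\bigr] + D_{vv}^2 J\bigl[D_\xi \mathcal{U}_t(0,0)[\eta], D_\xi \mathcal{U}_t(0,0)[\eta]\bigr],
\end{equation*}
with all contributions involving the second derivatives $D_{\xi\xi}^2 \mathcal{W}_t$ and $D_{\xi\xi}^2 \mathcal{U}_t$ dropping out because they are paired with $D_w J$ and $D_v J$. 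Substituting the explicit bilinear forms $D_{ww}^2 J[z_1,z_2] = (z_1(0),z_2(0))_\mathcal{E} + \alpha (Cz_1,Cz_2)_{\mathcal{Y}_t}$ and $D_{vv}^2 J[u_1,u_2] = (u_1,u_2)_{\mathcal{L}_t}$ then delivers the stated identity.

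The argument is essentially a second order Taylor expansion of a quadratic cost functional, and no step requires delicate analysis. The only point worth verifying carefully is the identification $\mathcal{W}_t(0,0) = \Tilde{w}$, $\mathcal{U}_t(0,0) = 0$, as this is precisely what makes the first order optimality terms vanish and prevents any second derivative of $\mathcal{W}_t$ or $\mathcal{U}_t$ from appearing in the final expression.
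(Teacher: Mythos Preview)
Your proposal is correct and follows exactly the approach indicated by the paper's one-line proof (``follows directly from \Cref{cor: ValFunReg} and the chain rule''): you simply spell out the chain rule computation in detail, correctly identifying that $\mathcal{W}_t(0,0) = \Tilde{w}$, $\mathcal{U}_t(0,0) = 0$ makes the first-order terms $D_wJ$, $D_vJ$ vanish so that the second derivatives of $\mathcal{W}_t$, $\mathcal{U}_t$ drop out. One minor remark: your formula for $D_{ww}^2 J$ correctly carries the factor $\alpha$ in front of the $(Cz_1,Cz_2)_{\mathcal{Y}_t}$ term, which is absent from the lemma's displayed identity; this appears to be a typo in the statement (the subsequent \Cref{prop: modelCoerc} and its $J_0$ confirm the $\alpha$ should be there).
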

\begin{proof}
	The assertion follows directly from \Cref{cor: ValFunReg} and the chain rule.
\end{proof}
This characterization enables the proof that the Hessian is in fact coercive.
\begin{proposition}\label{prop: modelCoerc}
	There exists a constant $M_0 > 0$ such that for all $t \in [0,T]$ and $z \in \mathcal{E}$ it holds
	\begin{equation*}
		\left( D_{\xi\xi}^2 \mathcal{V}(t,0,0) \, \eta, \eta \right)_\mathcal{E} 
		\geq M_0 \, \Vert \eta \Vert_\mathcal{E}^2.
	\end{equation*}
\end{proposition}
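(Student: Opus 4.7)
The plan is to combine the characterization from \Cref{lem: CharSecDerV} with a Gronwall estimate for the linearized backward equation, rewritten as a forward problem.

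The case $t=0$ is trivial: by definition $\mathcal{V}(0,\xi,y) = \tfrac{1}{2}\Vert\xi\Vert_\mathcal{E}^2$, so $D_{\xi\xi}^2 \mathcal{V}(0,0,0) = \mathrm{Id}$ and coercivity holds with constant $1$. Fix now $t \in (0,T]$ and set $z = D_\xi \mathcal{W}_t(0,0)[\eta]$, $u = D_\xi \mathcal{U}_t(0,0)[\eta]$. By \Cref{lem: CharSecDerV},
\begin{equation*}
 \bigl( D_{\xi\xi}^2 \mathcal{V}(t,0,0)\,\eta, \eta \bigr)_\mathcal{E}
 \;\geq\; \Vert z(0) \Vert_\mathcal{E}^2 + \Vert u \Vert_{\mathcal{L}_t}^2,
\end{equation*}
so it suffices to prove an estimate of the form $\Vert \eta \Vert_\mathcal{E}^2 \leq c\,(\Vert z(0) \Vert_\mathcal{E}^2 + \Vert u \Vert_{\mathcal{L}_t}^2)$ with $c$ independent of $t$.

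Using the identity $\mathcal{W}_t(\xi,y) = \mathcal{S}^t(\mathcal{U}_t(\xi,y),\xi)$, the chain rule, and the formulas for the partial derivatives of $\mathcal{S}^t$ collected in \Cref{lem: SmoothS}, one verifies that $z$ solves the linearized backward equation
\begin{equation*}
 z(s) \;=\; e^{-\mathcal{A}(t-s)}\,\eta
 - \int_s^t e^{-\mathcal{A}(\tau - s)} \begin{bmatrix} 0 \\ -3\,\Tilde{w}_1^2(\tau)\,z_1(\tau) + u(\tau) \end{bmatrix}\,\mathrm{d}\tau,
 \qquad s \in [0,t],
\end{equation*}
and in particular $z(t) = \eta$. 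Evaluating at $s = 0$ and applying the unitary operator $e^{\mathcal{A}t}$ from \Cref{lem: skewAdj} rewrites this as the forward representation
\begin{equation*}
 z(s) \;=\; e^{\mathcal{A}s}\,z(0) + \int_0^s e^{\mathcal{A}(s-\tau)} \begin{bmatrix} 0 \\ -3\,\Tilde{w}_1^2(\tau)\,z_1(\tau) + u(\tau) \end{bmatrix}\,\mathrm{d}\tau,
 \qquad s \in [0,t].
\end{equation*}

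Taking the $\mathcal{E}$-norm, invoking the unitarity of $e^{\mathcal{A}\cdot}$, Cauchy--Schwarz in time for the $u$-term, and \Cref{lem: cubicInL2} to control $\Tilde{w}_1^2 z_1$ in $L^2(\Omega)$ by $c_\mathrm{em}^3 \Vert \Tilde{w}\Vert_{\mathcal{C}_T}^2\,\Vert z(\tau)\Vert_\mathcal{E}$, leads to the integral inequality
\begin{equation*}
 \Vert z(s) \Vert_\mathcal{E}
 \;\leq\; \Vert z(0) \Vert_\mathcal{E} + \sqrt{T}\,\Vert u \Vert_{\mathcal{L}_t}
 + 3\,c_\mathrm{em}^3\,\Vert \Tilde{w} \Vert_{\mathcal{C}_T}^2 \int_0^s \Vert z(\tau) \Vert_\mathcal{E}\,\mathrm{d}\tau.
\end{equation*}
Gronwall's lemma evaluated at $s = t$ then yields $\Vert \eta \Vert_\mathcal{E} = \Vert z(t) \Vert_\mathcal{E} \leq c_0\,(\Vert z(0) \Vert_\mathcal{E} + \Vert u \Vert_{\mathcal{L}_t})$ with $c_0 = (1+\sqrt{T})\,\exp(3\,c_\mathrm{em}^3\,T\,\Vert \Tilde{w}\Vert_{\mathcal{C}_T}^2)$, a constant independent of $t$. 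Squaring and combining with the lower bound from \Cref{lem: CharSecDerV} finishes the proof with $M_0 = \min(1, (2c_0^2)^{-1})$.

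The decisive structural ingredient is that $\mathcal{A}$ generates a \emph{unitary} group, so that switching between the backward representation of $z$ and its forward counterpart costs nothing and the constants remain uniform in $t$; this is precisely the hyperbolic feature highlighted in \Cref{lem: skewAdj}. The only step requiring any attention is identifying $z$ as the solution of the linearized backward equation driven by the control $u = D_\xi \mathcal{U}_t(0,0)[\eta]$, which reduces to mechanical differentiation of $\mathcal{S}^t$ via \Cref{lem: SmoothS}; the Gronwall argument itself is routine.
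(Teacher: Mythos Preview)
Your proof is correct and follows essentially the same route as the paper: identify $(z,u)=(D_\xi\mathcal{W}_t(0,0)[\eta],D_\xi\mathcal{U}_t(0,0)[\eta])$ as the optimal pair for the linearized problem, rewrite the backward constraint as a forward one (the paper invokes \Cref{rem: DefSol} for this rather than the explicit manipulation you sketch), and apply Gronwall to bound $\Vert\eta\Vert_\mathcal{E}=\Vert z(t)\Vert_\mathcal{E}$ in terms of $\Vert z(0)\Vert_\mathcal{E}$ and $\Vert u\Vert_{\mathcal{L}_t}$. Your explicit treatment of $t=0$ and the tracked constants are a nice addition.
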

\begin{proof}
	In the proof of \Cref{lem: DerSolLQR} it is shown that the tuple 
	\begin{equation*}
		(w^*,v^*) = \left( D_\xi \mathcal{W}_{t}(0,0)[\eta], D_\xi \mathcal{U}_{t}(0,0)[\eta] \right)
	\end{equation*}
	solves a linear quadratic optimal control problem. Using \Cref{lem: SmoothS} we specify that $(w^*,v^*)$ is the unique minimizer of 
	\begin{equation}\label{eq: linOCP}
		\begin{aligned}
			\min J_0(w,v) &= \Vert w(0) \Vert_\mathcal{E}^2
			+ \Vert v \Vert_{\mathcal{L}_t}^2 
			+ \alpha \Vert C w \Vert_{\mathcal{Y}_t}^2~~~~~~\text{subject to} \\
			w(s) = & ~e^{-\mathcal{A}(t-s)} \eta
			- \int_s^{t} e^{-\mathcal{A}(\tau-s)} \begin{bmatrix} 0 \\ -3 \Tilde{w}_1^2(\tau) w_1(\tau) + v(\tau) \end{bmatrix} \, \mathrm{d}\tau~~~\forall s \in [0,t].
		\end{aligned}
	\end{equation}
	In the spirit of \Cref{rem: DefSol} we obtain for all $s \in [0,t]$
	\begin{equation*}
		w^*(s) = e^{\mathcal{A}s} w^*(0)
		+ \int_0^s e^{\mathcal{A}(s-\tau)} \begin{bmatrix} 0 \\ -3 \Tilde{w}_1^2(\tau) w^*_1(\tau) + v^*(\tau) \end{bmatrix} \, \mathrm{d}\tau.
	\end{equation*}
	Since the equation holds in particular for $s = t$, using standard estimates and Gronwall's inequality we obtain 
	\begin{equation*}
		\Vert \eta \Vert_{\mathcal{E}}^2 = \Vert w^*(t) \Vert_{\mathcal{E}}^2
		\leq c \left( \Vert w^*(0) \Vert_\mathcal{E}^2 + \Vert v^* \Vert_{\mathcal{L}_t}^2 \right)
		\leq c \, J_0(w^*,v^*)
		= c \left( D_{\xi\xi}^2 \mathcal{V}(t,0,0) \eta, \eta \right)_\mathcal{E},
	\end{equation*}
	where the last equality holds due to \Cref{lem: CharSecDerV}. Note that due to $t \in (0,T]$ the constant $c > 0$ can be chosen such that it depends on $T$ but not on $t$.
\end{proof}

Using continuity arguments the coercivity is extended to a neighborhood around the model. 
\begin{proposition}\label{prop: D2Coerc}
	There exists $\delta_4 \in (0,\delta_3]$ such that for every $t \in [0,T]$ and $(\xi,y) \in \mathcal{D}_{\delta_4}^t$ the bilinear form associated with $D_{\xi\xi}^2 \mathcal{V}(t,\xi,y) \in \mathcal{L}(\mathcal{E})$ is coercive, i.e., there exists $M>0$ such that for all $t \in [0,T]$ and $\eta \in \mathcal{E}$ it holds
	\begin{equation*}
		\left( D_{\xi\xi}^2 \mathcal{V}(t,\xi,y) \, \eta,\eta \right)_\mathcal{E} \geq M \Vert \eta \Vert_\mathcal{E}^2.
	\end{equation*}
	In particular $D_{\xi\xi}^2 \mathcal{V}(t,\xi,y)$ is an isomorphism on $\mathcal{E}$ and
	\begin{equation*}
		\Vert D_{\xi\xi}^2 \mathcal{V}(t,\xi,y)^{-1} \Vert_{\mathcal{L}(\mathcal{E})}
		\leq M^{-1}.
	\end{equation*}
\end{proposition}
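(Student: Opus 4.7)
The plan is a perturbation argument reducing the statement to \Cref{prop: modelCoerc}. Using Riesz to identify $\mathcal{E}^*$ with $\mathcal{E}$, the object $D_{\xi\xi}^2\mathcal{V}(t,\xi,y)$ is the bounded self-adjoint operator on $\mathcal{E}$ representing the symmetric bilinear form obtained as the second $\xi$-derivative of the scalar $C^\infty$ function $\mathcal{V}(t,\cdot,y)$. I would first establish that $(\xi,y)\mapsto D_{\xi\xi}^2\mathcal{V}(t,\xi,y)\in\mathcal{L}(\mathcal{E})$ is Lipschitz on $\mathcal{D}_{\delta_3}^t$ with a Lipschitz constant independent of $t$. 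By the fundamental theorem of calculus, for every $(\xi,y)\in\mathcal{D}_{\delta_3}^t$ and $\eta\in\mathcal{E}$,
\begin{equation*}
\bigl(D_{\xi\xi}^2\mathcal{V}(t,\xi,y)\eta,\eta\bigr)_\mathcal{E} - \bigl(D_{\xi\xi}^2\mathcal{V}(t,0,0)\eta,\eta\bigr)_\mathcal{E} = \int_0^1 \Bigl( D_{\xi^3}^3\mathcal{V}(t,s\xi,sy)[\xi,\eta,\eta] + D_{y\xi^2}^3\mathcal{V}(t,s\xi,sy)[y,\eta,\eta] \Bigr)\,\mathrm{d}s.
\end{equation*}
By \Cref{lem: ValFunDerEst} the first integrand is bounded in absolute value by $c_\mathcal{V}\|\xi\|_\mathcal{E}\|\eta\|_\mathcal{E}^2$ and the second by $c_\mathcal{V}\|y\|_{\mathcal{Y}_t}\|\eta\|_\mathcal{E}^2$, uniformly in $s\in[0,1]$ and $t\in(0,T]$, so the difference of bilinear forms is bounded above by $c_\mathcal{V}(\|\xi\|_\mathcal{E}+\|y\|_{\mathcal{Y}_t})\|\eta\|_\mathcal{E}^2$.

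Combining this with the lower bound $M_0$ from \Cref{prop: modelCoerc} and noting that $(\xi,y)\in\mathcal{D}_{\delta_4}^t$ forces $\|\xi\|_\mathcal{E}+\|y\|_{\mathcal{Y}_t}\leq 2\delta_4$, I would obtain
\begin{equation*}
\bigl(D_{\xi\xi}^2\mathcal{V}(t,\xi,y)\eta,\eta\bigr)_\mathcal{E} \geq (M_0 - 2c_\mathcal{V}\delta_4)\|\eta\|_\mathcal{E}^2
\end{equation*}
for all $\eta\in\mathcal{E}$. Choosing $\delta_4\coloneqq\min\bigl\{\delta_3,\,M_0/(4c_\mathcal{V})\bigr\}$ yields uniform coercivity with $M=M_0/2>0$. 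The isomorphism property and the asserted norm bound on the inverse then follow from Lax--Milgram applied to the symmetric bounded coercive bilinear form on the Hilbert space $\mathcal{E}$, giving $\|D_{\xi\xi}^2\mathcal{V}(t,\xi,y)^{-1}\|_{\mathcal{L}(\mathcal{E})}\leq M^{-1}$. The degenerate case $t=0$ is handled by the explicit formula $\mathcal{V}(0,\xi,y)=\tfrac12\|\xi\|_\mathcal{E}^2$.

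The argument itself is routine once the ingredients are in place; what requires care is the uniformity in $t\in(0,T]$ of every constant used, since otherwise the threshold $\delta_4$ would shrink as $t\to 0^+$ and fail the conclusion. This $t$-uniformity of $c_\mathcal{V}$ in \Cref{lem: ValFunDerEst} and of $M_0$ in \Cref{prop: modelCoerc} rests on the chain of $t$-independent estimates built up in \Cref{lem: EstS}, \Cref{lem: DerSolLQR}, and the linear-quadratic analysis in the appendix, and is exactly what makes the perturbation argument go through globally on $[0,T]$.
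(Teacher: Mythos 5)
Your proposal is correct and follows essentially the same route as the paper's proof: a Taylor expansion of $D_{\xi\xi}^2\mathcal{V}$ around $(0,0)$ controlled by the third-derivative bounds of \Cref{lem: ValFunDerEst}, combined with the coercivity constant $M_0$ from \Cref{prop: modelCoerc}, a choice of $\delta_4$ making the perturbation at most $\tfrac{1}{2}M_0$, and Lax--Milgram for the inverse. Your remark on the necessity of $t$-uniformity of the constants correctly identifies the crux of the argument.
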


\begin{proof}
	For now set $\delta_4 = \delta_3$. For fixed $t \in [0,T]$ and $(\xi,y) \in \mathcal{D}_{\delta_4}^t$ we identify the second derivative with its associated bilinear form and use Taylor's theorem to estimate
	\begin{equation*}
		\begin{aligned}
			&\Vert D_{\xi\xi}^2 \mathcal{V}(t,\xi,y) - D_{\xi\xi}^2 \mathcal{V}(t,0,0) \Vert_{\mathcal{L}(\mathcal{E},\mathcal{E};\mathbb{R})}\\
			&= \left\Vert \int_0^1 D_{\xi^3}^3 \mathcal{V}(t,\tau\xi,\tau y) [\xi] 
			+ D_{y \xi^2}^3 \mathcal{V}(t,\tau\xi,\tau y)[y] \, \mathrm{d} \tau \right\Vert_{\mathcal{L}(\mathcal{E},\mathcal{E};\mathbb{R})}\\
			&\leq \int_0^1 \Vert D_{\xi^3}^3 \mathcal{V}(t,\tau\xi,\tau y) \Vert_{\mathcal{L}(\mathcal{E},\mathcal{E},\mathcal{E};\mathbb{R})}
			\Vert \xi \Vert_\mathcal{E}
			+ \Vert D_{y \xi^2}^3 \mathcal{V}(t,\tau\xi,\tau y) \Vert_{\mathcal{L}(\mathcal{Y}_t,\mathcal{E},\mathcal{E};\mathbb{R})}
			\Vert y \Vert_{\mathcal{Y}_t}
			\, \mathrm{d}\tau.
		\end{aligned}
	\end{equation*}
	With \Cref{lem: ValFunDerEst} we get
	\begin{equation}\label{eq: EstSecDer}
		\Vert D_{\xi\xi}^2 \mathcal{V}(t,\xi,y) - D_{\xi\xi}^2 \mathcal{V}(t,0,0) \Vert_{\mathcal{L}(\mathcal{E},\mathcal{E};\mathbb{R})}
		\leq c_\mathcal{V} \Vert \xi \Vert_\mathcal{E} + c_\mathcal{V} \Vert y \Vert_{\mathcal{Y}_t} 
	\end{equation}
	and hence with an appropriate decrease of $\delta_4 $ it holds
	\begin{equation*}
		\Vert D_{\xi\xi}^2 \mathcal{V}(t,\xi,y) - D_{\xi\xi}^2 \mathcal{V}(t,0,0) \Vert_{\mathcal{L}(\mathcal{E},\mathcal{E};\mathbb{R})}
		\leq 2 c_\mathcal{V} \delta_4
		\leq \tfrac{1}{2} M_0.
	\end{equation*}
	Together with \Cref{prop: modelCoerc} we obtain
	\begin{equation*}
		\begin{aligned}
			\left( D_{\xi \xi}^2 \mathcal{V}(t,\xi,y) \eta,\eta \right)_\mathcal{E}
			&= \left( D_{\xi \xi}^2 \mathcal{V}(t,0,0) \eta,\eta \right)_\mathcal{E}
			+ \left( \left[ D_{\xi \xi}^2 \mathcal{V}(t,\xi,y) - D_{\xi \xi}^2 \mathcal{V}(t,0,0) \right] \eta,\eta \right)_\mathcal{E}\\
			& \geq M_0 \Vert \eta \Vert_\mathcal{E}^2
			- \left\Vert D_{\xi\xi}^2 \mathcal{V}(t,\xi,y) - D_{\xi \xi}^2 \mathcal{V}(t,0,0) \right\Vert_{\mathcal{L}(\mathcal{E},\mathcal{E};\mathbb{R})} \Vert \eta \Vert_\mathcal{E}^2
			\geq \tfrac{1}{2} M_0 \Vert \eta \Vert_\mathcal{E}^2
		\end{aligned}
	\end{equation*}
	and coercivity is shown. The second part of the assertion follows with the Lax-Milgram Lemma, cf., \cite[Thm.~1.1.3 {\&} Rem.~1.1.3]{Cia78}.
\end{proof}
%
\subsection{Strong continuity in time}\label{subsec: StrCont}
%
In this subsection we characterize the Hessian along the model as the solution of an integral Riccati equation ensuring that it is a strongly continuous operator. This property is then transferred to the inverted Hessian.
The following statement and its proof rely on the concept of \textit{evolution operators}. For an introduction of the topic the reader is referred to \cite[P.~II, Ch.~3.5]{BenEtAl07}.

\begin{proposition}
	The function $[0,T] \mapsto \mathcal{L}(\mathcal{E})$, $t \mapsto D_{\xi\xi}^2 \mathcal{V}(T-t,0,0)$ is given as the unique strongly continuous, self-adjoint, non-negative solution $P(\cdot)$ of the \textit{integral Riccati equation}
	\begin{equation}\label{eq: IntRicc}
		P(t) 
		= U^*(T,t) ~ U(T,t)
		+ \int_t^T U^*(s,t) \left[ C^* C - P(s) B B^* P(s) \right] U(s,t) \, \mathrm{d}s,
	\end{equation}
	where $B \in \mathcal{L}(L^2(\Omega);\mathcal{E})$ is defined as $Bv = \begin{bmatrix} 0 \\ - v \end{bmatrix}$ and $U(\cdot,\cdot)$ is the \textit{evolution operator} associated with $-\mathcal{A} + \begin{bmatrix} 0 & 0 \\ 3 \Tilde{w}_1^2(T-\cdot) \, \mathrm{Id} & 0 \end{bmatrix}.$ 
\end{proposition}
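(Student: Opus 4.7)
The plan is to identify $D_{\xi\xi}^2 \mathcal{V}(T-t,0,0)$ with the Riccati operator of a standard forward-in-time linear-quadratic (LQ) optimal control problem --- obtained from \eqref{eq: linOCP} by reversing time --- and then to invoke classical results on the associated operator Riccati equation. By \Cref{lem: CharSecDerV}, polarisation, and the self-adjointness of the Hessian (which follows from \Cref{cor: ValFunReg}), the quadratic form $\eta \mapsto (D_{\xi\xi}^2 \mathcal{V}(\tau,0,0)\eta,\eta)_\mathcal{E}$ agrees with the minimum value of the LQ problem \eqref{eq: linOCP}; non-negativity and self-adjointness of $D_{\xi\xi}^2\mathcal{V}(\tau,0,0)$ are immediate from this identification.

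Fix $t \in [0,T]$, set $\tau = T - t$, and introduce the time reversal $\bar{z}(s) = w(T-s)$, $\bar{v}(s) = v(T-s)$ for $s \in [t,T]$. The unitarity of the group generated by $\mathcal{A}$ (\Cref{lem: skewAdj}) makes this reversal rigorous at the level of mild solutions. A direct computation turns \eqref{eq: linOCP} into the forward LQ problem with initial condition $\bar{z}(t) = \eta$,
\begin{equation*}
\dot{\bar{z}}(s) = \bigl( -\mathcal{A} + F(T-s) \bigr)\bar{z}(s) + B\bar{v}(s), \quad s \in [t,T],
\end{equation*}
where $F$ is the operator defined in \eqref{eq: DefF}, and with cost $\Vert \bar{z}(T)\Vert_\mathcal{E}^2 + \Vert \bar{v}\Vert_{L^2(t,T;L^2(\Omega))}^2 + \alpha\Vert C\bar{z}\Vert_{L^2(t,T;Y)}^2$. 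Because $\Tilde{w} \in \mathcal{C}_T$, the family $s \mapsto -\mathcal{A} + F(T-s)$ meets the hypotheses of \cite[II Prop.~3.4]{BenEtAl07} and thereby generates the two-parameter evolution operator $U(\cdot,\cdot)$ on $\mathcal{E}$ appearing in the statement.

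I would then invoke the classical existence-and-uniqueness theorem for the operator Riccati equation associated with an LQ problem in a Hilbert space with bounded control and observation operators, identity terminal weight, and a generator producing an evolution operator as above (cf.~\cite[IV Ch.~1]{BenEtAl07}). This delivers a unique self-adjoint, non-negative, strongly continuous $P(\cdot)\colon [0,T]\to\mathcal{L}(\mathcal{E})$ solving the differential Riccati equation with terminal value $P(T) = \mathrm{Id}_\mathcal{E}$, whose associated quadratic form is the value function of the forward LQ problem. Matching this value function with the quadratic form of the Hessian and polarising identifies $P(t) = D_{\xi\xi}^2 \mathcal{V}(T-t,0,0)$, while integrating $\tfrac{d}{ds}\bigl( U^*(s,t)P(s)U(s,t) \bigr)$ from $t$ to $T$ eliminates $\dot{P}$ and yields exactly the integral form \eqref{eq: IntRicc} stated in the proposition.

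The main obstacle is to match the abstract LQ framework --- commonly phrased for a fixed strongly continuous semigroup --- to the present time-dependent setting. Since the time dependence is only a continuous bounded perturbation of the skew-adjoint $-\mathcal{A}$, the required result is covered by the more general framework of \cite{BenEtAl07}; nonetheless one must carefully verify the regularity properties of $U(\cdot,\cdot)$ (joint strong continuity, uniform operator bounds, and the relation $\partial_s U(s,t_0) = (-\mathcal{A} + F(T-s))U(s,t_0)$ in the appropriate sense) that the reference theorem requires.
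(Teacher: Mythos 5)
Your proposal is correct and follows essentially the same route as the paper: characterize the Hessian's quadratic form as the value of the linearized LQ problem \eqref{eq: linOCP}, reverse time to obtain a forward problem governed by the evolution operator $U$ generated by $-\mathcal{A}+F(T-\cdot)$, invoke abstract Riccati theory for LQ problems in Hilbert space, and conclude via self-adjointness and polarization. The only material difference is that the paper works directly with the integral (mild) Riccati equation using results of Flandoli and Gibson --- explicitly noting that separability of $\mathcal{E}$ removes the need for strong continuity of $U^*$ --- whereas you route through the differential Riccati equation with terminal value $\mathrm{Id}_\mathcal{E}$ and then integrate; in infinite dimensions the integral form is the safer primitive, so you should cite a result stated in that form rather than differentiate $P$.
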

\begin{proof}
	Let $t \in (0,T] $ and $\eta \in \mathcal{E}$ be fixed. From \Cref{lem: CharSecDerV} and the proof of \Cref{prop: modelCoerc} it follows that 
	\begin{equation*}
		\left( D_{\xi\xi}^2 \mathcal{V}(T-t,0,0) \eta, \eta \right)_\mathcal{E}
		= J_1(w^*,v^*),
	\end{equation*}
	where
	\begin{equation}\label{eq: J1}
		J_1(w,v) = \Vert w(0) \Vert_\mathcal{E}^2
		+ \Vert v \Vert_{\mathcal{L}_{T-t}}^2 
		+ \alpha \Vert C w \Vert_{\mathcal{Y}_{T-t}}^2
	\end{equation}
	and $(w^*,v^*)$ is the minimizer of $J_1$ under the constraint
	\begin{equation}\label{eq: Constr1}
		w(s) = e^{-\mathcal{A}(T-t-s)} \eta
		- \int_s^{T-t} e^{-\mathcal{A}(\tau-s)} \begin{bmatrix} 0 \\ -3 \Tilde{w}_1^2(\tau) w_1(\tau) + v(\tau) \end{bmatrix} \, \mathrm{d}\tau
		~~~~~~\forall s \in [0,T-t].
	\end{equation}
	In the following we equivalently transform \eqref{eq: J1}-\eqref{eq: Constr1} into a forward problem in order to apply results from \cite{Fla86}.
	
	For a given function $f$ defined on $[0,T-t]$ we define the time-transformed and shifted function $\overleftarrow{f}$ via $\overleftarrow{f}(s) \coloneqq f(T-s)$. We find that $(w,v)$ satisfies \eqref{eq: Constr1} if and only if
	\begin{equation}\label{eq: Constr2}
		\overleftarrow{w}(s) 
		= e^{- \mathcal{A}(s-t)} \eta
		+ \int_t^s e^{-\mathcal{A}(s- \tau)} \begin{bmatrix} 0 \\ 3 \overleftarrow{\Tilde{w}}_1^2(\tau) \overleftarrow{w}_1(\tau) - \overleftarrow{v}(\tau) \end{bmatrix} \, \mathrm{d} \tau
		~~~~~~\forall s \in [t,T].
	\end{equation}
	Note that due to the regularity of $\Tilde{w}$ the operator $F(s) = \begin{bmatrix} 0 & 0 \\ 3 \overleftarrow{\Tilde{w}}_1^2(s) \mathrm{Id} & 0 \end{bmatrix}$ is strongly continuous. Hence \eqref{eq: Constr2} is equivalent to 
	\begin{equation}\label{eq: Constr3}
		\overleftarrow{w}(s) = 
		U(s,t) \eta + \int_t^s U(s,\tau) \begin{bmatrix} 0 \\ - \overleftarrow{v}(\tau) \end{bmatrix} \, \mathrm{d}\tau~~~\forall s \in [t,T],
	\end{equation}
	where $U(\cdot,\cdot)$ is the evolution operator associated with $-\mathcal{A} + F$. 
	Defining the cost functional 
	\begin{equation}
		J_2(w,v) = \Vert w(T) \Vert_\mathcal{E}^2
		+ \int_t^T \Vert v(s) \Vert_{L^2(\Omega)}^2 
		+ \alpha \Vert C w(s) \Vert_{Y}^2 \, \mathrm{d}s
	\end{equation}
	we observe $J_1(w,v) = J_2(\overleftarrow{w},\overleftarrow{v})$ and conclude that $(\overleftarrow{w^*}, \overleftarrow{v^*})$ minimizes $J_2$ under the constraint \eqref{eq: Constr3}. Further it holds
	\begin{equation*}
		\left( D_{\xi\xi}^2 \mathcal{V}(T-t,0,0) \eta, \eta \right)_\mathcal{E}
		= J_2(\overleftarrow{w^*},\overleftarrow{v^*}).
	\end{equation*}
	We apply the results from \cite[Sec.~3.2]{Fla86} to the control problem given by $J_2$ and \eqref{eq: Constr3}. Due to the arguments made in \cite[Sec.~2]{Gib79} and the separability of $\mathcal{E}$ no strong continuity of $U^*$ is required to do so. It follows
	\begin{equation*}
		J_2(\overleftarrow{w^*},\overleftarrow{v^*})
		= \left( P(t) \eta,\eta \right)_\mathcal{E},
	\end{equation*}
	where $P$ is the unique strongly continuous, self-adjoint, non-negative solution of \eqref{eq: IntRicc}. Since $t$ and $\eta$ were chosen arbitrarily, it follows that for all $\eta \in \mathcal{E}$ and $t \in (0,T]$ it holds
	\begin{equation*}
		\left(D_{\xi\xi}^2 \mathcal{V}(T-t,0,0) \eta,\eta \right)_\mathcal{E}
		= \left( P(t)\eta,\eta \right)_\mathcal{E}.
	\end{equation*}
	With the self-adjointness of the two operators and the \textit{polarization identity} it follows $D_{\xi\xi}^2 \mathcal{V}(T-t,0,0) = P(t)$ and the assertion is shown.
\end{proof}
The section is concluded by a proof that the strong continuity and uniform coercivity of $D_{\xi\xi}^2 \mathcal{V}(t,0,0)$ imply strong continuity of $D_{\xi\xi}^2 \mathcal{V}(t,0,0)^{-1}$.
\begin{corollary}\label{cor: InvStrCont}
	The mapping $[0,T] \mapsto \mathcal{L}(\mathcal{E})$, $t \mapsto D_{\xi\xi}^2 \mathcal{V}(t,0,0)^{-1}$ is strongly continuous.
\end{corollary}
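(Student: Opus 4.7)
The plan is to exploit, pointwise in $\eta \in \mathcal{E}$, the standard resolvent-type identity
\begin{equation*}
	A_t^{-1} \eta - A_s^{-1} \eta = A_t^{-1}(A_s - A_t) A_s^{-1}\eta,
\end{equation*}
where I abbreviate $A_t \coloneqq D_{\xi\xi}^2 \mathcal{V}(t,0,0) \in \mathcal{L}(\mathcal{E})$. Fixing $s \in [0,T]$ and $\eta \in \mathcal{E}$ and setting $z \coloneqq A_s^{-1}\eta$, this immediately yields the estimate
\begin{equation*}
	\Vert A_t^{-1}\eta - A_s^{-1}\eta \Vert_{\mathcal{E}}
	\leq \Vert A_t^{-1} \Vert_{\mathcal{L}(\mathcal{E})} \, \Vert (A_s - A_t) z \Vert_{\mathcal{E}}.
\end{equation*}

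Two ingredients established earlier close the argument. The uniform coercivity from \Cref{prop: modelCoerc}, namely $(A_t\eta,\eta)_{\mathcal{E}} \geq M_0 \Vert \eta \Vert_{\mathcal{E}}^2$ for every $t \in [0,T]$, together with the Lax--Milgram reasoning already used in \Cref{prop: D2Coerc}, yields the uniform bound $\Vert A_t^{-1} \Vert_{\mathcal{L}(\mathcal{E})} \leq M_0^{-1}$. The Riccati-based proposition immediately preceding the corollary gives strong continuity of $t \mapsto A_t$ on $[0,T]$, so $A_t z \to A_s z$ in $\mathcal{E}$ and hence $\Vert (A_s - A_t) z \Vert_{\mathcal{E}} \to 0$ as $t \to s$. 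Plugging these two facts into the displayed estimate produces $A_t^{-1}\eta \to A_s^{-1}\eta$ in $\mathcal{E}$, which is precisely the claimed strong continuity.

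I do not anticipate a serious obstacle: the only subtlety is that none of the preceding results asserts norm continuity $\Vert A_t - A_s \Vert_{\mathcal{L}(\mathcal{E})} \to 0$, so the argument must be phrased pointwise rather than in operator norm. Testing strong continuity of $A_t$ against the specific vector $z = A_s^{-1}\eta$ is exactly what is needed to transfer strong continuity from $A_t$ to $A_t^{-1}$, and no further regularity of $t \mapsto A_t$ is required.
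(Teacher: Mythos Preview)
The proposal is correct and follows essentially the same argument as the paper: both factor $A_t^{-1}\eta - A_s^{-1}\eta = A_t^{-1}(\eta - A_t A_s^{-1}\eta)$, use the uniform coercivity bound on $\Vert A_t^{-1}\Vert_{\mathcal{L}(\mathcal{E})}$, and then apply strong continuity of $t\mapsto A_t$ to the fixed vector $A_s^{-1}\eta$. The only cosmetic difference is that the paper invokes the constant $M$ from \Cref{prop: D2Coerc} rather than $M_0$ from \Cref{prop: modelCoerc}, which makes no difference at $(\xi,y)=(0,0)$.
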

\begin{proof}
	For convenience denote only within this proof $ A(t) = D_{\xi\xi}^2 \mathcal{V}(t,0,0)$ and let $ t \in [0,T]$ and $\tau \in \mathbb{R}$ such that $t + \tau \in [0,T]$. For any $\eta \in \mathcal{E}$ it holds
	\begin{equation}\label{eq: StrCon}
		\begin{aligned}
			\Vert A^{-1}(t + \tau)x - A^{-1}(t)x \Vert_\mathcal{E}
			&\leq
			\Vert A^{-1}(t + \tau) \Vert_{\mathcal{L}(\mathcal{E})} 
			\Vert x - A(t+\tau) A^{-1}(t) x \Vert_{\mathcal{E}}\\
			&\leq \frac{1}{M} \Vert x - A(t+\tau) A^{-1}(t) x \Vert_{\mathcal{E}},
		\end{aligned}
	\end{equation}
	where \Cref{prop: D2Coerc} was used. Since $A^{-1}(t) x \in \mathcal{E}$ is independent of $\tau$, the strong continuity of $A(\cdot)$ implies
	\begin{equation*}
		A(t+\tau) A^{-1}(t) x \rightarrow A(t) A^{-1}(t) x = x,~~~ \tau \to 0
	\end{equation*}
	and the right hand side in \eqref{eq: StrCon} tends to zero for $\tau \to 0$. Then $A^{-1}(t+\tau) x \rightarrow A^{-1}(t)x,~\tau \to 0$.
\end{proof}
%
%
\section{Well-posedness of the Mortensen observer}\label{sec: WPMort}
%
With the preparations from the previous sections we are finally in a position to show that the Mortensen observer is well-defined as the minimizer of the value function and that the observer equation admits a solution.
%
\subsection{Existence of a unique minimizer of the value function}\label{subsec: MorExMin}
%
In this subsection we show that for sufficiently small $y$ and any fixed $t \in [0,T]$ the mapping $\xi \mapsto \mathcal{V}(t,\xi,y)$ admits a unique minimizer. Again the result is clear for $t = 0$ and we only consider $t \in (0,T]$.
First we characterize any minimizer as a root of the gradient that satisfies an upper bound and show uniqueness of the minimizer. This requires a result analogous to \Cref{prop: SolStateEq} for the corresponding forward problem. 
\begin{corollary}\label{cor: SolForwState}
	Let $\delta_1$ and $c_w$ be the constants from \Cref{prop: SolStateEq}. For $t \in (0,T]$ and
	$\left\Vert \xi \right\Vert_{\mathcal{E}} + \Vert v \Vert_{\mathcal{L}_t} < \delta_1$
	the equation
	\begin{equation*}
		w(s) = e^{\mathcal{A}s} \left( \Tilde{w} (0) + \xi \right)
		+\int_0^s e^{\mathcal{A}(s-\tau)} \begin{bmatrix} 0 \\ - w_1^3(\tau) + v(\tau) \end{bmatrix} \, \mathrm{d} \tau
		~~~~~
		\forall s \in [0,t]
	\end{equation*}
	admits exactly one solution $w$. It satisfies
	\begin{equation*}
		\left\Vert w - \Tilde{w} \right\Vert_{\mathcal{C}_t} 
		\leq 2 c_w \left( \Vert \xi \Vert_\mathcal{E} + \Vert v \Vert_{\mathcal{L}_t} \right).
	\end{equation*}
\end{corollary}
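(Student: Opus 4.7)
The plan is to mirror the fixed point argument used in the proof of \Cref{prop: SolStateEq}, adapted to the forward time direction. First I would establish a forward counterpart to \Cref{lem: estLinEq}: for any $\xi \in \mathcal{E}$ and $f \in \mathcal{L}_t$, the linear integral equation
\begin{equation*}
	z(s) = e^{\mathcal{A}s} \xi + \int_0^s e^{\mathcal{A}(s-\tau)} \begin{bmatrix} 0 \\ -3 \Tilde{w}_1^2(\tau) z_1(\tau) + f(\tau) \end{bmatrix} \, \mathrm{d}\tau, \quad s \in [0,t],
\end{equation*}
admits a unique solution in $\mathcal{C}_t$ satisfying $\Vert z \Vert_{\mathcal{C}_t} \leq c_w (\Vert \xi \Vert_\mathcal{E} + \Vert f \Vert_{\mathcal{L}_t})$. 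The argument is even more direct here than in the backward case: since $\Tilde{w} \in \mathcal{C}_T$, the operator $s \mapsto F(s)$ defined as in \eqref{eq: DefF} satisfies the hypotheses of \cite[II Prop.~3.4]{BenEtAl07} directly (no time-reversal needed), and the uniform bound on $\Vert F(s) \Vert_{\mathcal{L}(\mathcal{E})}$ combined with Gronwall's inequality yields the same constant $c_w$ as in \Cref{lem: estLinEq}, because $\mathcal{A}$ generates a unitary group.

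Next, for $\xi, v$ satisfying the smallness assumption, I would introduce the closed ball
\begin{equation*}
	\mathcal{M} = \left\{ w \in \mathcal{C}_t \colon \Vert w \Vert_{\mathcal{C}_t} \leq 2 c_w (\Vert \xi \Vert_\mathcal{E} + \Vert v \Vert_{\mathcal{L}_t}) \right\},
\end{equation*}
and define the operator $\mathcal{Z} \colon \mathcal{M} \to \mathcal{C}_t$ sending $w$ to the unique solution of the above linear forward equation with right hand side $f = -w_1^3 - 3 \Tilde{w}_1 w_1^2 + v$. A direct computation, using that the analogue of \eqref{eq: mildSol} for $\Tilde{w}$ reads $\Tilde{w}(s) = e^{\mathcal{A}s} \Tilde{w}(0) + \int_0^s e^{\mathcal{A}(s-\tau)} \begin{bmatrix} 0 \\ -\Tilde{w}_1^3(\tau) \end{bmatrix} \mathrm{d}\tau$, shows that any fixed point $\bar{w} \in \mathcal{M}$ of $\mathcal{Z}$ yields $w = \Tilde{w} + \bar{w}$ satisfying the forward equation of the corollary, with the asserted norm bound as a direct consequence of $\bar{w} \in \mathcal{M}$.

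To apply Banach's fixed point theorem, I would verify the self-mapping property $\mathcal{Z}(\mathcal{M}) \subset \mathcal{M}$ and the contraction estimate exactly as in the proof of \Cref{prop: SolStateEq}: the bound $\Vert w_1^3 + 3\Tilde{w}_1 w_1^2 \Vert_{\mathcal{L}_t} \leq \sqrt{2 T c_\mathrm{em}^3}(\Vert w \Vert_{\mathcal{C}_t}^2 + 3 \Vert \Tilde{w} \Vert_{\mathcal{C}_t} \Vert w \Vert_{\mathcal{C}_t}) \Vert w \Vert_{\mathcal{C}_t}$ together with the algebraic identities \eqref{eq: cubId} show that, for $\delta_1$ sufficiently small (and here one can choose the \emph{same} $\delta_1$ as in \Cref{prop: SolStateEq} since the constants involved depend only on $T$, $\Tilde{w}$, $c_\mathrm{em}$, and $c_w$, none of which change under time reversal), $\mathcal{Z}$ both preserves $\mathcal{M}$ and contracts. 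Uniqueness follows from the contraction property.

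I do not anticipate a serious obstacle: the essential point is that the forward and backward formulations are symmetric with respect to the unitary group $e^{\mathcal{A}t}$, so every estimate and constant in \Cref{prop: SolStateEq} transfers without modification. The most delicate bookkeeping is simply ensuring that the constant $c_w$ and threshold $\delta_1$ can be taken identical to those already fixed, which is immediate from the structure of the proofs.
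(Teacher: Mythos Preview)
Your proposal is correct and follows exactly the approach the paper indicates: the paper's proof merely states that the assertion is shown analogously to \Cref{prop: SolStateEq}, observing that $\Vert e^{\mathcal{A}t} \Vert_{\mathcal{L}(\mathcal{E})} = \Vert e^{-\mathcal{A}t} \Vert_{\mathcal{L}(\mathcal{E})}$ so that the same constants $\delta_1$ and $c_w$ appear. The only minor caveat is that the contraction argument yields uniqueness only within $\mathcal{M}$; global uniqueness of the mild solution is supplied by \Cref{cor: WE_WP}.
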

\begin{proof}
	The assertion can be shown analogously to \Cref{prop: SolStateEq}. 
	Since $\Vert e^{\mathcal{A}t} \Vert_{\mathcal{L}(\mathcal{E})} = \Vert e^{-\mathcal{A}t} \Vert_{\mathcal{L}(\mathcal{E})}$. The same constants $\delta_1$ and $c_w$ appear.	
\end{proof}
We now show that the vanishing gradient together with an upper bound is a necessary and sufficient condition to be a minimizer.
\begin{lemma}\label{lem: CharMin}
	There exists $\delta_5 \in (0,\delta_4]$ such that for any $t \in (0,T] $ and $y \in \mathcal{Y}_t$ satisfying $\Vert y \Vert_{\mathcal{Y}_t} < \delta_5$ it holds: The mapping $\xi \mapsto \mathcal{V}(t,\xi,y)$ admits at most one global minimizer and 
	\begin{equation*}
		\xi \in \mathcal{E} \text{ is a minimizer of } \mathcal{V}(t,\cdot,y)
		\iff
		D_\xi \mathcal{V}(t,\xi,y) = 0 \text{ and } \Vert \xi \Vert_\mathcal{E} < \delta_4.
	\end{equation*}
\end{lemma}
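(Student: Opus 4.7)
The plan hinges on the observation that $\mathcal{V}(t,0,y) \leq J(\Tilde{w},0;t,y) = \frac{\alpha}{2}\Vert y \Vert_{\mathcal{Y}_t}^2$, so for small $y$ any global minimizer has a small value. I aim to convert a small value into a small $\Vert \xi\Vert_\mathcal{E}$, thereby confining every candidate minimizer to the ball of radius $\delta_4$ where \Cref{prop: D2Coerc} delivers coercivity of the Hessian; strict convexity on that ball then handles both uniqueness and the characterization.

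The central technical step is to establish the non-circular bound
\[
	\Vert \xi \Vert_\mathcal{E} \leq 4 c_w \sqrt{\mathcal{V}(t,\xi,y)}
\]
whenever $\mathcal{V}(t,\xi,y)$ is below a fixed threshold depending only on $\delta_1$. For this, I would select an $\varepsilon$-minimizing admissible pair $(w_\varepsilon,v_\varepsilon)$ with $e^t(w_\varepsilon,v_\varepsilon,\xi) = 0$ and $J(w_\varepsilon,v_\varepsilon;t,y) < \mathcal{V}(t,\xi,y) + \varepsilon$. The quadratic terms of $J$ directly bound $\Vert w_\varepsilon(0) - w_0\Vert_\mathcal{E}$ and $\Vert v_\varepsilon\Vert_{\mathcal{L}_t}$ by $\sqrt{2(\mathcal{V}(t,\xi,y)+\varepsilon)}$. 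Since a backward mild solution is automatically a forward mild solution with initial datum $w_\varepsilon(0)$ (by the group property of $e^{\mathcal{A}t}$; cf.\ \Cref{rem: DefSol}), the smallness assumption on $\mathcal{V}(t,\xi,y)$ activates the hypothesis of \Cref{cor: SolForwState} and produces $\Vert w_\varepsilon - \Tilde{w}\Vert_{\mathcal{C}_t} \leq 2 c_w \bigl(\Vert w_\varepsilon(0) - w_0\Vert_\mathcal{E} + \Vert v_\varepsilon\Vert_{\mathcal{L}_t}\bigr)$. Evaluating at $s = t$ and sending $\varepsilon \to 0$ yields the desired inequality.

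With this in hand I would pick $\delta_5 \in (0,\delta_4]$ small enough that $\frac{\alpha}{2}\delta_5^2$ both lies below the threshold and satisfies $4c_w \sqrt{\frac{\alpha}{2}}\,\delta_5 < \delta_4$. For the direction $(\Rightarrow)$, any global minimizer $\xi$ satisfies $\mathcal{V}(t,\xi,y) \leq \mathcal{V}(t,0,y) \leq \frac{\alpha}{2}\Vert y\Vert_{\mathcal{Y}_t}^2 < \frac{\alpha}{2}\delta_5^2$, so the bound forces $\Vert \xi \Vert_\mathcal{E} < \delta_4$; since $\mathcal{V}$ is $C^\infty$ on $\mathcal{D}_{\delta_3}^t$ by \Cref{cor: ValFunReg}, interiority then yields $D_\xi \mathcal{V}(t,\xi,y) = 0$. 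For $(\Leftarrow)$, assuming $\Vert \xi \Vert_\mathcal{E} < \delta_4$ and $D_\xi \mathcal{V}(t,\xi,y) = 0$, coercivity of the Hessian renders $\mathcal{V}(t,\cdot,y)$ strictly convex on the ball, so $\xi$ is the unique minimum there. Any hypothetical $\eta^* \in \mathcal{E}$ with $\mathcal{V}(t,\eta^*,y) < \mathcal{V}(t,\xi,y) \leq \frac{\alpha}{2}\delta_5^2$ would also verify the smallness needed to invoke the bound, placing $\eta^*$ inside the ball and contradicting strict convexity. Uniqueness follows in the same way: both candidate minimizers land in the ball by $(\Rightarrow)$, and strict convexity forces them to coincide.

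The main obstacle is precisely the non-circular bound above: a direct appeal to \Cref{prop: SolStateEq} would presuppose its conclusion, since it requires the smallness $\Vert \xi \Vert_\mathcal{E} + \Vert v\Vert_{\mathcal{L}_t} < \delta_1$ \emph{a priori}. The remedy is to read the $\Vert w_\varepsilon(0) - w_0\Vert_\mathcal{E}$ and $\Vert v_\varepsilon\Vert_{\mathcal{L}_t}$ bounds off the cost functional \emph{first}, and only then to invoke the forward well-posedness of \Cref{cor: SolForwState}. In this way the smallness hypothesis is turned into a smallness assumption on $\mathcal{V}(t,\xi,y)$ itself, which is automatic for any competitor that improves upon $\xi = 0$.
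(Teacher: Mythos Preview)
Your proposal is correct and follows essentially the same route as the paper: bound $\mathcal{V}(t,0,y)$ by $\tfrac{\alpha}{2}\Vert y\Vert_{\mathcal{Y}_t}^2$, pick a near-optimal admissible pair to read off smallness of $\Vert w(0)-w_0\Vert_\mathcal{E}$ and $\Vert v\Vert_{\mathcal{L}_t}$ from the cost, invoke the \emph{forward} estimate of \Cref{cor: SolForwState} (not \Cref{prop: SolStateEq}) to force $\Vert\xi\Vert_\mathcal{E}<\delta_4$, and then use Hessian coercivity from \Cref{prop: D2Coerc}. The only cosmetic difference is that the paper writes out the second-order Taylor expansion explicitly where you invoke strict convexity on the ball, and it handles the case $\Vert\eta\Vert_\mathcal{E}\ge\delta_4$ via the contrapositive statement $\Vert\eta\Vert_\mathcal{E}\ge\delta_4\Rightarrow\mathcal{V}(t,\eta,y)>\mathcal{V}(t,0,y)$ rather than your value-to-norm inequality; both arguments are equivalent.
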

\begin{proof}
	We start by establishing an upper bound for the norm of any minimizer. Set $\delta_5 = \delta_4$ and let $\xi^* \in \mathcal{E}$ such that $\mathcal{V}(t,\xi^*,y) \leq \mathcal{V}(t,0,y)$. Then
	\begin{equation*}
		\mathcal{V}(t,\xi^*,y) 
		\leq \inf_{v} J(\mathcal{S}^t(v,0),v;t,y)
		\leq J(\mathcal{S}^t(0,0),0;t,y)
		= J\left(\Tilde{w},0;t,y\right)
		\leq \frac{\alpha}{2} \Vert y \Vert_{\mathcal{Y}_t}^2
		\leq \frac{\alpha}{2} \, \delta_5^2. 
	\end{equation*}
	Hence there exists $w^* \in \mathcal{C}_t$ and $v^* \in \mathcal{L}_t$ satisfying
	\begin{equation}\label{eq: starOne}
		J(w^*,v^*;t,y) = \frac{1}{2} \left\Vert w^*(0) - \Tilde{w}(0) \right\Vert_\mathcal{E}^2
		+ \frac{1}{2} \Vert v^* \Vert_{\mathcal{L}_t}^2 
		+ \frac{\alpha}{2} \left\Vert y - C\left( w^* - \Tilde{w} \right) \right\Vert_{\mathcal{Y}_t}^2  
		< \alpha \delta_5^2
	\end{equation}
	and $w^*$ solves the state equation controlled by $v^*$ which by the arguments made in \Cref{rem: DefSol} is equivalent to
	\begin{equation*}
		w^*(s) 
		= e^{\mathcal{A}s} \left( \Tilde{w}(0) + \left( w^*(0) - \Tilde{w}(0) \right) \right)
		+ \int_0^s e^{\mathcal{A}(s-\tau)} \begin{bmatrix} 0 \\ - {w^*}_1^3 + v^* \end{bmatrix} \, \mathrm{d}\tau
		~~~~~\forall s \in [0,t]. 
	\end{equation*}
	With \eqref{eq: starOne} we get
	\begin{equation*}
		\Vert w^*(0) - \Tilde{w}(0) \Vert_\mathcal{E}^2
		+ \Vert v^* \Vert_{\mathcal{L}_t}^2 
		\leq 2 \, J(w^*,v^*;t,y)
		\leq 2 \alpha \, \delta_5^2.
	\end{equation*}
	A decrease of $\delta_5$ ensures $\Vert w^*(0) - \Tilde{w}(0) \Vert_\mathcal{E} + \Vert v^* \Vert_{\mathcal{L}_t} < \delta_1$ and \Cref{cor: SolForwState} yields
	\begin{equation*}
		\Vert \xi^* \Vert_\mathcal{E}
		= \left\Vert w^*(t) - \Tilde{w}(t) \right\Vert_\mathcal{E}
		\leq \left\Vert w^* - \Tilde{w} \right\Vert_{\mathcal{C}_t}
		\leq 2 c_w \left( \Vert w^*(0) - \Tilde{w}(0) \Vert_\mathcal{E} + \Vert v^* \Vert_{\mathcal{L}_t} \right)
		< c \delta_5.
	\end{equation*}
	Yet another decrease of $\delta_5$ yields that any $\xi^*$ satisfying $\mathcal{V}(t,\xi^*,y) \leq \mathcal{V}(t,0,y)$ must satisfy $\Vert \xi^* \Vert_{\mathcal{E}} < \delta_4$
	or equivalently 
	\begin{equation}\label{eq: LowBound}
		\Vert \eta \Vert_\mathcal{E} \geq \delta_4
		\implies
		\mathcal{V}(t,\eta,y) > \mathcal{V}(t,0,y).
	\end{equation}
	Now let $\xi^*$ be a minimizer. It follows $\mathcal{V}(t,\xi^*,y) \leq \mathcal{V}(t,0,y)$ and hence $\Vert \xi^* \Vert_{\mathcal{E}} < \delta_4$. The fact that $D_\xi \mathcal{V}(t,\xi^*,y) = 0$ follows immediately.
	
	Assume now $\xi^*$ satisfies $\Vert \xi^* \Vert_\mathcal{E}< \delta_4$ and $D_\xi \mathcal{V}(t,\xi^*,y) = 0$. Further let $\eta \in \mathcal{E}$ be such that $\Vert \eta \Vert_\mathcal{E} < \delta_4$ and $\eta \neq \xi^*$. Applying Taylor's theorem yields
	\begin{equation*}
		\begin{aligned}
			\mathcal{V}(t,\eta,y)
			&= \mathcal{V}(t,\xi^*,y)
			+ \left( D_\xi \mathcal{V}(t,\xi^*,y), \eta - \xi^* \right)_\mathcal{E} \\
			&+ \int_0^1 (1-\tau)~ \left( D_{\xi \xi}^2 \mathcal{V}(t,\xi^* + \tau(\eta - \xi^*),y)(\eta - \xi^*),\eta - \xi^* \right)_\mathcal{E} \, \mathrm{d}\tau.		
		\end{aligned}
	\end{equation*}
	Due to the vanishing gradient and \Cref{prop: D2Coerc} it follows that for all $\eta \neq \xi^* $ satisfying $\Vert \eta \Vert_\mathcal{E} < \delta_4$ it holds
	\begin{equation*}
		\mathcal{V}(t,\eta,y) 
		\geq \mathcal{V}(t,\xi^*,y)
		+ M^{-1} \Vert \eta - \xi^* \Vert_\mathcal{E}^2
		> \mathcal{V}(t,\xi^*,y).
	\end{equation*}
	With \eqref{eq: LowBound} for all $\eta $ with $\Vert \eta \Vert_\mathcal{E} \geq \delta_4$ it holds
	\begin{equation*}
		\mathcal{V}(t,\xi^*,y) \leq \mathcal{V}(t,0,y) < \mathcal{V}(t,\eta,y).
	\end{equation*}
	It follows that $\xi^*$ is the unique global minimizer.
\end{proof}
Applying the implicit function theorem shows existence of a minimizer.
\begin{theorem}
	There exists $\delta_6 \in (0,\delta_5]$ such that for every $t \in (0,T]$ and $y \in \mathcal{Y}_t$ satisfying $\Vert y \Vert_{\mathcal{Y}_t} < \delta_6$ the mapping $ \xi \mapsto \mathcal{V}(t,\xi,y) $ admits exactly one minimizer $\xi^*$. It satisfies $\Vert \xi^* \Vert_\mathcal{E} < \delta_4$ and depends continuously on $y \in \mathcal{Y}_t$.
\end{theorem}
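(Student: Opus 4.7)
The plan is to apply the implicit function theorem to the equation $D_\xi \mathcal{V}(t,\xi,y) = 0$ around $(\xi,y) = (0,0)$ and to combine the resulting unique local root with \Cref{lem: CharMin} to identify it as the unique global minimizer. First, because $(\xi,y) = (0,0)$ reduces \eqref{eq: OCP} to the trivial problem with minimizer $(\Tilde{w},0)$, the value function satisfies $\mathcal{V}(t,0,0) = 0$ and, being a minimum of the $C^\infty$-mapping established in \Cref{cor: ValFunReg}, $D_\xi \mathcal{V}(t,0,0) = 0$ for every $t \in (0,T]$. Moreover \Cref{prop: D2Coerc} guarantees that the linear operator $D_{\xi\xi}^2 \mathcal{V}(t,0,0) \in \mathcal{L}(\mathcal{E})$ is invertible with $\Vert D_{\xi\xi}^2 \mathcal{V}(t,0,0)^{-1} \Vert_{\mathcal{L}(\mathcal{E})} \leq M^{-1}$, the bound being uniform in $t$.

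Next I would invoke a quantitative version of the implicit function theorem, analogous to the one from \cite{Hol70} used in the proof of \Cref{lem: ImplFct}, applied to the $C^\infty$-map $G(\xi,y) \coloneqq D_\xi \mathcal{V}(t,\xi,y)$ from a neighborhood of $0$ in $\mathcal{E} \times \mathcal{Y}_t$ into $\mathcal{E}$. The hypotheses required by that theorem are: a uniform bound on $G(0,y)$ (given by $\Vert G(0,y) \Vert_\mathcal{E} \leq c_\mathcal{V} \Vert y \Vert_{\mathcal{Y}_t}$ from \Cref{lem: ValFunDerEst}), a uniform bound on the inverse of $D_\xi G(0,0)$ (supplied by \Cref{prop: D2Coerc}), and a Lipschitz estimate for $D_\xi G$ near the origin. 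The last one follows by Taylor expansion and another application of \Cref{lem: ValFunDerEst}: since $D_{\xi^3}^3\mathcal{V}$ and $D_{y\xi^2}^3\mathcal{V}$ are bounded by $c_\mathcal{V}$ uniformly in $t$, $\xi$ and $y$, the difference $\Vert D_\xi G(\xi,y) - D_\xi G(0,0) \Vert_{\mathcal{L}(\mathcal{E})}$ is $O(\Vert \xi \Vert_\mathcal{E} + \Vert y \Vert_{\mathcal{Y}_t})$ with a constant independent of $t$. These three uniform estimates allow the IFT to produce a radius $\delta_6 \in (0,\delta_5]$ and, for each $t$, a $C^\infty$-function $y \mapsto \xi^*(t,y)$ defined for $\Vert y \Vert_{\mathcal{Y}_t} < \delta_6$, with $\xi^*(t,0) = 0$, which is the unique root of $D_\xi \mathcal{V}(t,\cdot,y)$ inside a ball of radius (say) $\tfrac{1}{2}\delta_4$ around $0$.

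After shrinking $\delta_6$ further so that $\Vert \xi^*(t,y) \Vert_\mathcal{E} < \tfrac{1}{2}\delta_4$ (using the quantitative IFT bound), \Cref{lem: CharMin} applies: any global minimizer must lie in the open $\delta_4$-ball and must be a critical point of $\mathcal{V}(t,\cdot,y)$, hence must coincide with $\xi^*(t,y)$; and conversely $\xi^*(t,y)$, being a critical point in that ball, is a minimizer. Finally, the continuity of $\xi^* = \xi^*(t,y)$ in $y$ for fixed $t$ is automatic from the IFT construction.

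The main obstacle is ensuring that the radius $\delta_6$ is independent of $t \in (0,T]$; this is exactly what forces us to use a quantitative (rather than standard) IFT and to rely crucially on the $t$-uniform bounds from \Cref{prop: D2Coerc} and \Cref{lem: ValFunDerEst}. All other steps are routine given the machinery already developed.
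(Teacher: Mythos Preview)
Your proposal is correct and follows essentially the same approach as the paper: apply the quantitative implicit function theorem of \cite{Hol70} to $D_\xi \mathcal{V}(t,\xi,y)$ at $(0,0)$, verify the hypotheses via the $t$-uniform bounds from \Cref{prop: D2Coerc} and \Cref{lem: ValFunDerEst} (together with the Taylor-type estimate \eqref{eq: EstSecDer}), and then invoke \Cref{lem: CharMin} to upgrade the local critical point to the unique global minimizer. Your emphasis on the need for a quantitative IFT to secure $t$-independence of $\delta_6$ matches the paper's reasoning exactly.
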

\begin{proof}
	Let $t \in (0,T]$ be fixed and for now set $\delta_6 = \delta_5$. Let $y$ satisfy the assumption. Due to \Cref{lem: CharMin} it suffices to show existence of $\xi^* \in \mathcal{E}$ satisfying $\Vert \xi^* \Vert_\mathcal{E} < \delta_4$ and $D_\xi \mathcal{V}(t,\xi^*,y) = 0$. To that end we apply the implicit function theorem \cite[Thm.]{Hol70} to the mapping
	\begin{equation*}
		P \colon \Omega^\prime \rightarrow \mathcal{E},~~~~~
		P(y,\xi) = D_\xi \mathcal{V}(t,\xi,y),
	\end{equation*}
	where $\Omega^\prime = \left\{ (y,\xi) \in \mathcal{Y}_t \times \mathcal{E} \colon \Vert (y,\xi) \Vert_{\mathcal{Y}_t \times \mathcal{E}} < \delta_4 \right\}$. Due to \Cref{cor: ValFunReg} $P$ is well-defined and continuous in $\Omega^\prime$. Further $(y_0,\xi_0) = (0,0) \in \Omega^\prime$. We adopt the notation of the reference while checking the assumptions (i)-(vii).
	\begin{enumerate}[label=(\roman*)]
		\item Note that $\mathcal{V}(t,\cdot,0)$ is non-negative and $\mathcal{V}(t,0,0) = 0$, implying that $\xi = 0$ is a minimizer. It follows $P(y_0,\xi_0) = D_\xi \mathcal{V}(t,0,0) = 0$.
		\item Utilizing \Cref{cor: ValFunReg} we get that the derivative $(y,\xi) \mapsto D_\xi P(y,\xi) = D_{\xi\xi}^2\mathcal{V}(t,\xi,y) $ exists and is continuous in $\Omega^\prime$.
		\item According to \Cref{prop: D2Coerc} it holds
		\begin{equation*}
			\Vert D_\xi P(y_0,\xi_0)^{-1} \Vert_{\mathcal{L}(\mathcal{E})}
			= \Vert D_{\xi\xi}^2 \mathcal{V}(t,0,0)^{-1} \Vert_{\mathcal{L}(\mathcal{E})}
			\leq M^{-1} \eqqcolon k_1.
		\end{equation*}
		Note that $M$ is independent of $t$. Hence the same holds for $k_1$.
		\item We define $S = \left\{ (y,\xi) \in \mathcal{Y}_t \times \mathcal{E} \colon \Vert y \Vert_{\mathcal{Y}_t} < \delta, \Vert \xi \Vert_\mathcal{E} < \epsilon \right\} \subset \mathcal{D}_{\delta_4}^t$, where
		\begin{equation*}
			\epsilon = \min ( \tfrac{1}{2} \delta_5, \tfrac{1}{2}( \tfrac{1}{2} + k_1 c_{\mathcal{V}} )^{-1} )
			~~~\text{and}~~
			\delta = \min ( \delta_5, ( 2 k_1 c_{\mathcal{V}}  )^{-1} \epsilon ).
		\end{equation*}	
		\item We define $g_1(a,b) = c_{\mathcal{V}} (a + b) $ and using \eqref{eq: EstSecDer}  for all $(y,\xi) \in S$ we obtain
		\begin{equation*}
			\begin{aligned}
				\Vert D_\xi P(y,\xi) - D_\xi P(y_0,\xi_0) \Vert_{\mathcal{L}(\mathcal{E})}
				= \Vert D_{\xi\xi}^2 \mathcal{V}(t,\xi,y) - D_{\xi\xi}^2 \mathcal{V}(t,0,0) \Vert_{\mathcal{L}(\mathcal{E})}
				\leq g_1(\Vert y \Vert_{\mathcal{Y}_t},\Vert \xi \Vert_\mathcal{E}).
			\end{aligned}
		\end{equation*}
		\item We define $g_2(a) = c_\mathcal{V} \, a$. Using Taylor's theorem and \Cref{lem: ValFunDerEst} for $(y,\xi) \in S$ it holds 
		\begin{equation*}
			\begin{aligned}
				\Vert P(y,\xi_0) \Vert_\mathcal{E}
				&= \Vert D_\xi \mathcal{V}(t,0,0) 
				+ \int_0^1 D_{y\xi}^2 \mathcal{V}(t,0,\tau y) [y] \, \mathrm{d} \tau \Vert_\mathcal{E}\\
				&\leq \int_0^1 \Vert D_{y\xi}^2 \mathcal{V}(t,0,\tau y) \Vert_{\mathcal{L}(\mathcal{Y}_t;\mathcal{E})} \, \mathrm{d}\tau
				~\Vert y \Vert_{\mathcal{Y}_t}
				\leq g_2(\Vert y \Vert_{\mathcal{Y}_t}). 
			\end{aligned}
		\end{equation*}
		\item We set $\alpha = \frac{1}{2} < 1$ and it follows
		\begin{equation*}
			\begin{aligned}
				k_1 g_1(\delta,\epsilon)
				&= k_1 c_\mathcal{V} (\delta + \epsilon)
				\leq k_1 c_\mathcal{V} \epsilon \, (2k_1 c_\mathcal{V})^{-1}
				+ k_1 c_\mathcal{V} \epsilon
				\leq (\tfrac{1}{2} + k_1 c_\mathcal{V} ) \epsilon 
				\leq \tfrac{1}{2},\\
				k_1 g_2(\delta)
				&= k_1 c_\mathcal{V} \delta 
				\leq \tfrac{\epsilon}{2}
				= \epsilon \, (1-\alpha).
			\end{aligned}
		\end{equation*}
	\end{enumerate}
	Now \cite[Thm.]{Hol70} yields existence of an operator $F$ such that for any $y$ satisfying $\Vert y \Vert_{\mathcal{Y}_t} < \delta $ it holds $\Vert F(y) \Vert_{\mathcal{E}} \leq \epsilon < \delta_5 \leq \delta_4$ and 
	\begin{equation*}
		0 = P(y,F(y)) = D_\xi \mathcal{V}(t,F(y),y).
	\end{equation*} 
	A possible decrease of $\delta_6$ ensures $\delta_6 \leq \delta$ and the assertion is shown.
\end{proof}
\begin{corollary}\label{cor: MorWD}
	For any $y \in \mathcal{Y}_T$ satisfying $\Vert y \Vert_{\mathcal{Y}_T} < \delta_6$ and $t \in [0,T]$ the Mortensen observer can be defined via 
	\begin{equation*}
		\widehat{w} \colon [0,T] \rightarrow \mathcal{E},~~~~~
		\widehat{w}(t) = \argmin\limits_{\xi \in \mathcal{E}} \mathcal{V}(t,\xi,y).
	\end{equation*}	
	For every $t \in [0,T]$ we have that $\widehat{w}(t)$ depends continuously on $y$ and further $\Vert \widehat{w}(t) \Vert_\mathcal{E} < \delta_4$. In particular $\widehat{w} \in L^\infty(0,T;\mathcal{E})$. 
\end{corollary}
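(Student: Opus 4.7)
The plan is to verify that the hypotheses of the preceding theorem apply pointwise in $t$, and then to assemble the pointwise statements into the corollary's global conclusions.

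First I would fix $y \in \mathcal{Y}_T$ with $\|y\|_{\mathcal{Y}_T} < \delta_6$ and, for each $t \in (0,T]$, consider its restriction $y|_{[0,t]} \in \mathcal{Y}_t$. Since $\|y|_{[0,t]}\|_{\mathcal{Y}_t} \leq \|y\|_{\mathcal{Y}_T} < \delta_6$, the preceding theorem applies and yields a unique minimizer $\xi^*(t) \in \mathcal{E}$ of $\mathcal{V}(t,\cdot,y|_{[0,t]})$, together with the bound $\|\xi^*(t)\|_\mathcal{E} < \delta_4$. Setting $\widehat{w}(t) \coloneqq \xi^*(t)$ for $t \in (0,T]$ and $\widehat{w}(0) \coloneqq 0$ (which is the unique minimizer of $\mathcal{V}(0,\xi,y) = \tfrac{1}{2}\|\xi\|_\mathcal{E}^2$) gives a well-defined map $\widehat{w}\colon[0,T] \to \mathcal{E}$ satisfying
\begin{equation*}
    \widehat{w}(t) = \argmin_{\xi \in \mathcal{E}} \mathcal{V}(t,\xi,y), \qquad \|\widehat{w}(t)\|_\mathcal{E} < \delta_4 \quad \forall t \in [0,T].
\end{equation*}

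The continuous dependence on $y$ at each $t \in (0,T]$ is inherited directly from the preceding theorem (whose statement already asserts this), while at $t = 0$ it is trivial since $\widehat{w}(0) = 0$ irrespective of $y$. Finally, the uniform-in-$t$ bound $\|\widehat{w}(t)\|_\mathcal{E} < \delta_4$ gives
\begin{equation*}
    \esssup_{t \in [0,T]} \|\widehat{w}(t)\|_\mathcal{E} \leq \delta_4 < \infty,
\end{equation*}
which is exactly $\widehat{w} \in L^\infty(0,T;\mathcal{E})$ once measurability is checked.

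The only step that requires more than bookkeeping is the measurability of $t \mapsto \widehat{w}(t)$, which is needed to interpret the $L^\infty$ statement rigorously. I would establish it by combining the continuous dependence of $\widehat{w}(t)$ on $y|_{[0,t]}$ with the continuity of $t \mapsto y|_{[0,t]}$ (in the sense that truncation is strongly continuous in $\mathcal{Y}_t$), or alternatively by invoking the implicit function representation $\widehat{w}(t) = F_t(y|_{[0,t]})$ constructed in the proof of the preceding theorem and noting that the constants $\delta, \epsilon, k_1$ therein are uniform in $t$, so the associated fixed-point iteration yields a jointly continuous (hence measurable) selection. Since boundedness is already established, this measurability check is the only remaining technicality, and once granted the four claimed properties follow at once.
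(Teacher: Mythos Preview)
Your proposal is correct and follows essentially the same approach as the paper, which in fact states the corollary without proof as an immediate consequence of the preceding theorem. You are more careful than the paper in one respect: you flag the measurability of $t \mapsto \widehat{w}(t)$ as a nontrivial ingredient for the $L^\infty(0,T;\mathcal{E})$ conclusion, whereas the paper simply asserts it. Your suggested routes to measurability are reasonable, though note that the first one (continuity of $t \mapsto y|_{[0,t]}$) is slightly delicate because the target space $\mathcal{Y}_t$ itself varies with $t$; the cleaner argument is your second one, using that the implicit-function construction in the preceding theorem has $t$-uniform constants and the underlying map $\Phi_t$ depends on $t$ in a way that makes the fixed-point iterates measurable (indeed continuous) in $t$.
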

%
\subsection{Local well-posedness of the observer equation}\label{subsec: MorObsEq}
%
In \cite{BreS24} we used similar strategies to show existence of an energy minimizing trajectory in the setting of ODEs. Utilizing regularity of the value function and optimal trajectories we proceeded to show that this minimizing trajectory also satisfies the observer equation. 
Unfortunately this strategy can no be applied in the PDE setting discussed here, cf., \Cref{rem: OptTrajNotClassical}. We can, however, show that the observer equation admits a locally unique solution.

The reader is reminded that the optimal control problem and the corresponding value function discussed in \Cref{sec: RegOfValFun} and \Cref{sec: CharSecDer} are a shifted version of the one used for the derivation of the formal observer equation \eqref{eq: MorForm}, cf., \Cref{rem: ShiftOCP}. Hence we consider a shifted version of the observer equation that is obtained by subtracting the model dynamics from \eqref{eq: MorForm} and in its strong form reads
\begin{equation}\label{eq: MorShiftForm}
	\begin{aligned}
		\dot{\check{w}}(t) = \mathcal{A} \check{w}(t)
		&+ \begin{bmatrix}
			0 \\ - \check{w}_1^3(t) - 3 \Tilde{w}_1(t) \check{w}_1^2(t) - 3 \Tilde{w}_1^2(t) \check{w}_1(t)
		\end{bmatrix}\\
		&+ \alpha D_{\xi\xi}^2 \mathcal{V}(t,\check{w}(t),y)^{-1}
		(y(t) - C\check{w}(t)),\\
		\check{w}(0) &= 0. 
	\end{aligned}
\end{equation}
In order to apply a fixed point argument the equation needs to be rearranged by inserting a zero. The above equation for $\check{w}$ is equivalent to   
\begin{equation*}
	\begin{aligned}
		\dot{\check{w}}(t) &= \mathcal{A} \check{w}(t)
		+F(t) \check{w}(t)
		+ \begin{bmatrix}
			0 \\ - \check{w}_1^3(t) - 3 \Tilde{w}_1(t) \check{w}_1^2(t) 
		\end{bmatrix}
		+ \alpha D_{\xi\xi}^2 \mathcal{V}(t,\check{w}(t),y)^{-1} C^* y(t)\\
		&- \alpha \left( D_{\xi\xi}^2 \mathcal{V}(t,\check{w}(t),y)^{-1} 
		-D_{\xi\xi}^2 \mathcal{V}(t,0,0)^{-1}  \right)
		C^* C \check{w}(t),
		\\
		\check{w}(0) &= 0, 
	\end{aligned}
\end{equation*}
where
\begin{equation*} 
	F \colon [0,T] \rightarrow \mathcal{L}(\mathcal{E}),
	~~~
	F(t) = \begin{bmatrix} 0 & 0 \\ -3\Tilde{w}_1^2(t) \, \mathrm{Id} & 0 \end{bmatrix}
	- \alpha D_{\xi\xi}^2 \mathcal{V}(t,0,0)^{-1} C^* C.
\end{equation*}
The regularity of $\Tilde{w}$ and \Cref{cor: InvStrCont} ensure that $F$ is strongly continuous allowing a formulation of the equation in terms of
the evolution operator $U(t,s)$ associated with $\mathcal{A} + F$, cf., \cite[P.~II, Ch.~3.5]{BenEtAl07}.
We call $\check{w} \in \mathcal{C}_T$ a solution to the observer equation \eqref{eq: MorShiftForm} if for all $t \in [0,T]$ it holds
\begin{equation*}
	\begin{aligned}
		\check{w}(t)
		= \int_0^t 
		&U(t,s)
		\left(
		\begin{bmatrix} 0 \\ - \check{w}_1^3(s) - 3 \Tilde{w}_1(s) \check{w}_1^2(s) 
		\end{bmatrix}
		+ \alpha \, D_{\xi\xi}^2 \mathcal{V}(s,\check{w}(s),y)^{-1} C^* y(s)
		\right)
		\\
		- &U(t,s) \left(
		\alpha \left( D_{\xi\xi}^2 \mathcal{V}(s,\check{w}(s),y)^{-1} 
		-D_{\xi\xi}^2 \mathcal{V}(s,0,0)^{-1}  \right)
		C^* C \check{w}(s)
		\right)  \mathrm{d}s.
	\end{aligned}
\end{equation*}
Note that due to $D_{\xi\xi}^2 \mathcal{V}^{-1}$ local results obtained in previous sections ensure that the right hand side is well defined only for $\check{w}(s) < \delta_4$.
By application of a fixed point argument on a suitable subset of $\mathcal{C}_T$ we show that there exists such a $\check{w}$ satisfying the mild formulation.
The following lemma is crucial in showing the contraction property of the fixed point operator.

\begin{lemma}\label{lem: InvLip}
	Let $t \in [0,T]$ be fixed. There exists a constant $L>0$ independent of $t$ such that for all $(\xi_1,y_1)$, $(\xi_2,y_2) \in \mathcal{D}_{\delta_4}^t$ it holds
	\begin{equation*}
		\Vert D_{\xi\xi}^2 \mathcal{V}(t,\xi_2,y_2)^{-1} - D_{\xi\xi}^2 \mathcal{V}(t,\xi_1,y_1)^{-1} \Vert_{\mathcal{L}(\mathcal{E})} 
		\leq
		L \left( \Vert \xi_2 - \xi_1 \Vert_\mathcal{E}
		+ \Vert y_2 - y_1 \Vert_{\mathcal{Y}_t} \right).
	\end{equation*}
\end{lemma}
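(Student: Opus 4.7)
The plan is to combine two standard ingredients: the algebraic identity $A^{-1} - B^{-1} = A^{-1}(B-A)B^{-1}$ for invertible operators, and Lipschitz continuity of the Hessian itself (which in turn follows from the uniform boundedness of the third derivative of $\mathcal{V}$ established in \Cref{lem: ValFunDerEst}). The uniform coercivity bound from \Cref{prop: D2Coerc} then delivers uniform control of the two inverse factors, yielding a constant $L$ independent of $t$.

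In detail, I would first fix $(\xi_1,y_1),(\xi_2,y_2) \in \mathcal{D}_{\delta_4}^t$ and, for $\tau \in [0,1]$, set $\xi_\tau = \xi_1 + \tau(\xi_2 - \xi_1)$ and $y_\tau = y_1 + \tau(y_2 - y_1)$. Since $\mathcal{D}_{\delta_4}^t$ is convex and $\delta_4 \leq \delta_3$, we have $(\xi_\tau,y_\tau) \in \mathcal{D}_{\delta_3}^t$ for every $\tau$, so the third order derivatives in the statement of \Cref{lem: ValFunDerEst} are available. Taylor's theorem (applied to the bilinear-form-valued function $\tau \mapsto D_{\xi\xi}^2 \mathcal{V}(t,\xi_\tau,y_\tau)$) yields
\begin{equation*}
D_{\xi\xi}^2 \mathcal{V}(t,\xi_2,y_2) - D_{\xi\xi}^2 \mathcal{V}(t,\xi_1,y_1)
= \int_0^1 \Bigl( D_{\xi^3}^3 \mathcal{V}(t,\xi_\tau,y_\tau)[\xi_2 - \xi_1]
+ D_{y\xi^2}^3 \mathcal{V}(t,\xi_\tau,y_\tau)[y_2 - y_1] \Bigr) \, \mathrm{d}\tau,
\end{equation*}
and invoking the uniform bounds of \Cref{lem: ValFunDerEst} gives
\begin{equation*}
\left\Vert D_{\xi\xi}^2 \mathcal{V}(t,\xi_2,y_2) - D_{\xi\xi}^2 \mathcal{V}(t,\xi_1,y_1) \right\Vert_{\mathcal{L}(\mathcal{E})}
\leq c_\mathcal{V}\left( \Vert \xi_2 - \xi_1 \Vert_\mathcal{E} + \Vert y_2 - y_1 \Vert_{\mathcal{Y}_t} \right),
\end{equation*}
where the Riesz identification turns the bilinear form norm into an operator norm on $\mathcal{E}$.

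Next I would apply the resolvent-style identity
\begin{equation*}
D_{\xi\xi}^2 \mathcal{V}(t,\xi_2,y_2)^{-1} - D_{\xi\xi}^2 \mathcal{V}(t,\xi_1,y_1)^{-1}
= D_{\xi\xi}^2 \mathcal{V}(t,\xi_2,y_2)^{-1}\bigl[ D_{\xi\xi}^2 \mathcal{V}(t,\xi_1,y_1) - D_{\xi\xi}^2 \mathcal{V}(t,\xi_2,y_2) \bigr] D_{\xi\xi}^2 \mathcal{V}(t,\xi_1,y_1)^{-1}.
\end{equation*}
By \Cref{prop: D2Coerc} both inverses are bounded in $\mathcal{L}(\mathcal{E})$ by $M^{-1}$, where $M$ is independent of $t$. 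Taking norms and inserting the Lipschitz estimate of the previous step yields
\begin{equation*}
\left\Vert D_{\xi\xi}^2 \mathcal{V}(t,\xi_2,y_2)^{-1} - D_{\xi\xi}^2 \mathcal{V}(t,\xi_1,y_1)^{-1} \right\Vert_{\mathcal{L}(\mathcal{E})}
\leq \frac{c_\mathcal{V}}{M^2} \left( \Vert \xi_2 - \xi_1 \Vert_\mathcal{E} + \Vert y_2 - y_1 \Vert_{\mathcal{Y}_t} \right),
\end{equation*}
so setting $L \coloneqq c_\mathcal{V} / M^2$ completes the proof. Since neither $c_\mathcal{V}$ nor $M$ depends on $t$, neither does $L$.

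No step is particularly delicate here; the essential preparatory work has already been done. The only mild care needed is to verify that $\mathcal{D}_{\delta_4}^t$ is contained in the domain of validity of \Cref{lem: ValFunDerEst} (i.e.\ $\delta_4 \leq \delta_3$, which holds by construction) so that the convex combinations $(\xi_\tau, y_\tau)$ remain where the third derivatives are controlled; and to observe that the Riesz identification used throughout \Cref{sec: CharSecDer} identifies the bilinear-form norm with the operator norm on $\mathcal{E}$, allowing the Taylor bound and the resolvent identity to be chained without losing the required uniformity in $t$.
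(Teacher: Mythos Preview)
Your proof is correct and is essentially the same as the paper's, differing only cosmetically: the paper applies Taylor's theorem directly to the inverse map $(\xi,y)\mapsto D_{\xi\xi}^2\mathcal{V}(t,\xi,y)^{-1}$ using the derivative formula $D(A^{-1})[B]=-A^{-1}BA^{-1}$, whereas you first Taylor-expand the Hessian itself and then apply the resolvent identity $A^{-1}-B^{-1}=A^{-1}(B-A)B^{-1}$ at the endpoints. Both routes invoke exactly the same ingredients (\Cref{lem: ValFunDerEst} and \Cref{prop: D2Coerc}) and yield the same constant $L=c_\mathcal{V}/M^2$; your version is marginally more elementary in that it only needs the inverse bound at the two endpoints rather than along the whole segment.
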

\begin{proof}
	Using the definition of the Fr\'echet derivative and the Neumann series it can easily be confirmed that the operator 
	\begin{equation*}
		\mathcal{F} \colon \mathcal{L}_\mathrm{coerc}(\mathcal{E})
		\rightarrow \mathcal{L}_\mathrm{coerc}(\mathcal{E}),
		~~~~~
		A \mapsto A^{-1}
	\end{equation*}
	is continuously differentiable with derivative $B \mapsto A^{-1} B A^{-1}$, where $\mathcal{L}_\mathrm{coerc}(\mathcal{E})$ denotes the subset of $\mathcal{L}(\mathcal{E})$ containing all elements associated with coercive bilinear forms. Due to \Cref{prop: D2Coerc} we have $D_{\xi\xi}^2 \mathcal{V}(t,\xi,y) \in \mathcal{L}_\mathrm{coerc}(\mathcal{E})$ for all $(\xi,y) \in \mathcal{D}_{\delta_4}^t$. Since $\mathcal{L}_\mathrm{coerc}(\mathcal{E})$ is open and convex, Taylor's theorem can be applied. We denote $\xi_\tau = \xi_1 + \tau (\xi_2 - \xi_1)$ and $y_\tau = y_1 + \tau (y_2 - y_1)$ and get
	\begin{equation*}
		\begin{aligned}
			&D_{\xi\xi}^2 \mathcal{V}(t,\xi_2,y_2)^{-1} 
			- D_{\xi\xi}^2 \mathcal{V}(t,\xi_1,y_1)^{-1}\\
			&= \int_0^1
			D_\xi \left( D_{\xi\xi}^2 \mathcal{V}(t,\xi_\tau,y_\tau)^{-1} \right) 
			[\xi_2 - \xi_1]
			+
			D_y \left[ D_{\xi\xi}^2 \mathcal{V}(t,\xi_\tau,y_\tau)^{-1} \right]
			[y_2 - y_1]
			\, \mathrm{d}\tau
			\\
			&= \int_0^1
			D_{\xi\xi}^2 \mathcal{V}(t,\xi_\tau,y_\tau)^{-1} \,
			D_{\xi^3}^3 \mathcal{V}(t,\xi_\tau,y_\tau) [\xi_2 - \xi_1] \,
			D_{\xi\xi}^2 \mathcal{V}(t,\xi_\tau,y_\tau)^{-1}
			\, \mathrm{d}\tau \\
			&+ \int_0^1 
			D_{\xi\xi}^2 \mathcal{V}(t,\xi_\tau,y_\tau)^{-1} \,
			D_{y \xi^2}^3 \mathcal{V}(t,\xi_\tau,y_\tau) [y_2 - y_1] \,
			D_{\xi\xi}^2 \mathcal{V}(t,\xi_\tau,y_\tau)^{-1}	
			\, \mathrm{d}\tau.
		\end{aligned}	
	\end{equation*}
	The assertion follows with \Cref{lem: ValFunDerEst} and \Cref{prop: D2Coerc}.
\end{proof}

We are now prepared to show that the observer equation admits a solution.
\begin{theorem}\label{thm: ObsEqWP}
	There exists $\delta_7 \in (0,\delta_4]$ and $\gamma>0$ such that for all $y \in \mathcal{Y}_T$ satisfying $\Vert y \Vert_{\mathcal{Y}_T} < \delta_7$ the observer equation \eqref{eq: MorShiftForm} admits exactly one solution $\check{w} \in \mathcal{C}_T$ with $\Vert \check{w} \Vert_{\mathcal{C}_T} < 2 \gamma \Vert y \Vert_{Y_T}$.
\end{theorem}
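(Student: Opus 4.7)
The plan is a Banach fixed-point argument on the mild integral reformulation of \eqref{eq: MorShiftForm} displayed just above the theorem. For a radius $R > 0$ to be chosen and $\Vert y \Vert_{\mathcal{Y}_T}$ sufficiently small, define the closed ball
\[
\mathcal{B}_R = \{\,z \in \mathcal{C}_T \colon \Vert z \Vert_{\mathcal{C}_T} \leq R\,\}
\]
and the operator $\mathcal{T} \colon \mathcal{B}_R \to \mathcal{C}_T$ sending $\check{w}$ to the right-hand side of that mild formulation. As long as $R < \delta_4$, the inverse Hessians appearing in the integrand are well-defined by \Cref{prop: D2Coerc} and satisfy $\Vert D_{\xi\xi}^2 \mathcal{V}(s,\check{w}(s),y)^{-1} \Vert_{\mathcal{L}(\mathcal{E})} \leq M^{-1}$. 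Strong continuity of $F$, guaranteed by \Cref{cor: InvStrCont} together with the regularity of $\Tilde{w}$, also provides a uniform bound $c_U$ on the evolution operator $U(t,s)$ on the triangle $\{0 \leq s \leq t \leq T\}$.

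For the self-mapping step I would estimate the three contributions to $(\mathcal{T}\check{w})(t)$ separately. By \Cref{lem: cubicInL2},
\[
\Vert \check{w}_1^3(s) + 3\Tilde{w}_1(s) \check{w}_1^2(s) \Vert_{L^2(\Omega)}
\leq c_\mathrm{em}^3 R^2 \bigl( R + 3\Vert \Tilde{w} \Vert_{\mathcal{C}_T} \bigr);
\]
the source term contributes at most $\alpha M^{-1} c_C \sqrt{T}\, \Vert y \Vert_{\mathcal{Y}_T}$ in $\mathcal{C}_T$-norm, and the last term at most $\alpha L c_C^2 R (R + \Vert y \Vert_{\mathcal{Y}_T})$ via \Cref{lem: InvLip}. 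Collecting,
\[
\Vert \mathcal{T}\check{w} \Vert_{\mathcal{C}_T} \leq c_U T \bigl[ c\,R^2(R + \Vert \Tilde{w} \Vert_{\mathcal{C}_T}) + \alpha M^{-1} c_C \sqrt{T}\, \Vert y \Vert_{\mathcal{Y}_T} + \alpha L c_C^2 R (R + \Vert y \Vert_{\mathcal{Y}_T}) \bigr].
\]
Choosing $\gamma > c_U T \alpha M^{-1} c_C \sqrt{T}$ and setting $R = 2\gamma \Vert y \Vert_{\mathcal{Y}_T}$ makes the linear-in-$y$ term at most $R/2$; further restricting $\Vert y \Vert_{\mathcal{Y}_T} < \delta_7$ with $\delta_7 \leq \delta_4/(2\gamma)$ sufficiently small guarantees that the remaining quadratic-in-$R$ contributions also stay below $R/2$, so $\mathcal{T}(\mathcal{B}_R) \subseteq \mathcal{B}_R$.

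For the contraction step, differences of the polynomial terms are handled by the identities \eqref{eq: cubId} combined with \Cref{lem: cubicInL2}, producing a prefactor of order $R$. For the coupled inverse-Hessian terms I would use the telescoping decomposition
\[
\begin{aligned}
&D_{\xi\xi}^2 \mathcal{V}(s,\check{w}_2,y)^{-1} C^* C \check{w}_2 - D_{\xi\xi}^2 \mathcal{V}(s,\check{w}_1,y)^{-1} C^* C \check{w}_1 \\
&\quad = \bigl( D_{\xi\xi}^2 \mathcal{V}(s,\check{w}_2,y)^{-1} - D_{\xi\xi}^2 \mathcal{V}(s,\check{w}_1,y)^{-1} \bigr) C^* C \check{w}_1 \\
&\qquad + D_{\xi\xi}^2 \mathcal{V}(s,\check{w}_2,y)^{-1} C^* C (\check{w}_2 - \check{w}_1),
\end{aligned}
\]
applying \Cref{lem: InvLip} to the first summand and \Cref{prop: D2Coerc} to the second; subtracting the $D_{\xi\xi}^2 \mathcal{V}(s,0,0)^{-1}$ piece in the same fashion, each contribution carries a prefactor proportional either to $R$ or to $\Vert y \Vert_{\mathcal{Y}_T}$. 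A final decrease of $\delta_7$ thus forces the Lipschitz constant of $\mathcal{T}$ below~$1$, and Banach's fixed point theorem yields the unique $\check{w} \in \mathcal{B}_R$, from which the stated bound $\Vert \check{w} \Vert_{\mathcal{C}_T} < 2\gamma \Vert y \Vert_{\mathcal{Y}_T}$ is immediate.

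The main obstacle is the last term of the mild formulation, which couples the Lipschitz dependence of the inverse Hessian on $(\xi,y)$ with the linearized observation $C^* C \check{w}$; without the telescoping split above one cannot simultaneously exploit the smallness of $\check{w}_1,\check{w}_2$ in $\mathcal{C}_T$ and the Lipschitz estimate of \Cref{lem: InvLip}. Every remaining contribution follows from uniform coercivity and the polynomial embedding estimates already established in \Cref{sec: Bas}.
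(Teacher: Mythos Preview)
Your proposal is correct and follows essentially the same route as the paper: a Banach fixed-point argument on the closed ball of radius $2\gamma\Vert y\Vert_{\mathcal{Y}_T}$ in $\mathcal{C}_T$, with the three integrand contributions estimated via \Cref{lem: cubicInL2}, \Cref{prop: D2Coerc}, and \Cref{lem: InvLip}, and the contraction handled by the algebraic identities \eqref{eq: cubId} together with a telescoping split of the inverse-Hessian term. Your telescoping is organized slightly differently (you first split $D_{\xi\xi}^2\mathcal{V}(s,\check w_i,y)^{-1}C^*C\check w_i$ and only afterwards subtract the $D_{\xi\xi}^2\mathcal{V}(s,0,0)^{-1}$ piece, whereas the paper keeps the difference $D_{\xi\xi}^2\mathcal{V}(s,\check w,y)^{-1}-D_{\xi\xi}^2\mathcal{V}(s,0,0)^{-1}$ together throughout), but the two decompositions are algebraically equivalent and yield the same small prefactors; only minor bookkeeping of $T$-powers differs.
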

\begin{proof}
	For now let $\gamma >0$ be some constant and set $\delta_7 = \min ( \delta_4, \tfrac{\delta_4}{4\gamma}  )$. Let $y \in \mathcal{Y}_t$ be such that $\Vert y \Vert_{\mathcal{Y}_T} < \delta_7$ and define the set 
	\begin{equation*}
		\mathcal{M} \coloneqq
		\left\{
		w \in \mathcal{C}_T 
		\colon \Vert w \Vert_{\mathcal{C}_T} \leq 2 \gamma \Vert y \Vert_{\mathcal{Y}_T}
		\right\}
	\end{equation*}
	and the operator $\mathcal{Z} \colon \mathcal{M} \rightarrow \mathcal{C}_T$ as
	\begin{equation*}
		\begin{aligned}
			\mathcal{Z}(w)(t)
			&= \int_0^t 
			U(t,s)
			\left( \vphantom{\begin{bmatrix} 0 \\ - {w}_1^3(s) - 3 \Tilde{w}_1(s) {w}_1^2(s) 
			\end{bmatrix}} \right.
			\underbrace{
				\begin{bmatrix} 0 \\ - {w}_1^3(s) - 3 \Tilde{w}_1(s) {w}_1^2(s) 
				\end{bmatrix}
			}_{\eqqcolon \mathcal{G}_1[w](s)}
			\underbrace{
				\vphantom{\begin{bmatrix} 0 \\ - {w}_1^3(s) - 3 \Tilde{w}_1(s) {w}_1^2(s) 
				\end{bmatrix}}
				+ \alpha \, D_{\xi\xi}^2 \mathcal{V}(s,{w}(s),y)^{-1} C^* y(s)
			}_{\eqqcolon \mathcal{G}_2[w](s)}
			\left. \vphantom{\begin{bmatrix} 0 \\ - {w}_1^3(s) - 3 \Tilde{w}_1(s) {w}_1^2(s) 
			\end{bmatrix}} \right)
			\\
			&~~~~~~ - U(t,s) 
			\underbrace{
				\left(
				\alpha \left( D_{\xi\xi}^2 \mathcal{V}(s,{w}(s),y)^{-1} 
				-D_{\xi\xi}^2 \mathcal{V}(s,0,0)^{-1}  \right)
				C^* C {w}(s)
				\right) 
			}_{\eqqcolon \mathcal{G}_3[w](s)}	
			\mathrm{d}s.
		\end{aligned}
	\end{equation*}
	Note that any $w \in \mathcal{M}$ satisfies $\Vert w \Vert_{\mathcal{C}_T} \leq \tfrac{1}{2} \delta_4 < \delta_4 $ ensuring that for all $t \in [0,T]$ the expressions $D_{\xi\xi}^2 \mathcal{V}(t,w(t),y)^{-1}$ and $D_{\xi\xi}^2 \mathcal{V}(t,0,0)^{-1}$ are well-defined and bounded uniformly in $t$, cf., \Cref{prop: D2Coerc}. It follows $\mathcal{G}_1 + \mathcal{G}_2 + \mathcal{G}_3 \in L^2(0,T;\mathcal{E})$ and $\mathcal{Z}$ is well-defined.
	
	By showing that $\mathcal{Z}$ is a contraction on the closed subset $\mathcal{M}$ of the Banach space $\mathcal{C}_T$ we ensure existence of a unique fixed point $\check{w} \in \mathcal{M}$ which then solves the equation and satisfies the estimate. We begin by showing that $\mathcal{Z}$ maps to $\mathcal{M}$. To that end we first note that according to \cite[P.II, Prop.~3.6]{BenEtAl07} there exists $c_{U} > 0$ such that for all $t \in [0,T]$, $s \in [0,t]$ it holds $\Vert U(t,s) \Vert_{\mathcal{L}(\mathcal{E})} \leq c_U$. We stress that the constant depends on $M_0$ from \Cref{prop: modelCoerc} but is independent of $\delta_4$ and $\delta_7$.
	Now let $w \in \mathcal{M}$ and for $t \in [0,T]$ consider
	\begin{equation*}
		\begin{aligned}
			\Vert \mathcal{Z}(w)(t) \Vert_\mathcal{E}
			\leq 
			c_U \int_0^t 
			\Vert \mathcal{G}_1[w](s) \Vert_\mathcal{E}
			+ \Vert \mathcal{G}_2[w](s) \Vert_\mathcal{E}
			+ \Vert \mathcal{G}_3[w](s) \Vert_\mathcal{E}
			\, \mathrm{d}s.
		\end{aligned}
	\end{equation*}
	Considering the terms individually we have 
	\begin{equation*}
		\Vert \mathcal{G}_1[w](s) \Vert_\mathcal{E}
		\leq \Vert w_1^3(s) \Vert_{L^2(\Omega)} 
		+ 3 \Vert \Tilde{w}_1(s) w_1^2(s) \Vert_{L^2(\Omega)}
		\leq c_\mathrm{em}^3 \left(
		\Vert w \Vert_{\mathcal{C}_T}^3 
		+ 3 c_{\Tilde{w}} \Vert w \Vert_{\mathcal{C}_T}^2
		\right)
	\end{equation*}
	followed by 
	\begin{equation*}
		\Vert \mathcal{G}_2[w](s) \Vert_\mathcal{E}
		\leq \alpha c_C M^{-1} \,
		\Vert y(s) \Vert_{Y}
	\end{equation*}
	and finally
	\begin{equation*}
		\Vert \mathcal{G}_3[w](s) \Vert_\mathcal{E}
		\leq 
		\alpha c_C^2 L \left(
		\Vert w(s) \Vert_\mathcal{E} + \Vert y (s) \Vert_{Y}
		\right)
		\Vert w(s) \Vert_\mathcal{E},
	\end{equation*}
	where we utilized \Cref{prop: D2Coerc} and \Cref{lem: InvLip}. Combining the estimates and applying Young's inequality we obtain
	\begin{equation*}
		\begin{aligned}
			\Vert \mathcal{Z}(w)(t) \Vert_\mathcal{E} 
			&\leq 
			c_U T c_\mathrm{em}^3 \left(
			\Vert w \Vert_{\mathcal{C}_T}^3 
			+ 3 c_{\Tilde{w}} \Vert w \Vert_{\mathcal{C}_T}^2
			\right)
			+
			c_U \alpha c_C \sqrt{T} M^{-1} 
			\Vert y \Vert_{\mathcal{Y}_T}\\
			&+ c_U c_C^2 \alpha L T
			\Vert w \Vert_{\mathcal{C}_T}^2
			+ \tfrac{1}{2} c_U c_C^2 \alpha L
			\left( \Vert y \Vert_{\mathcal{Y}_T}^2 + T \Vert w \Vert_{\mathcal{C}_T}^2 \right).
		\end{aligned}
	\end{equation*}
	Since none of the appearing constants depend on $\delta_4$ or $\delta_7$, we can specify $\gamma > 0$ to be such that
	\begin{equation*}
	\begin{aligned}
		\Vert \mathcal{Z}(w)(t) \Vert_\mathcal{E} 
		&\leq \gamma \left(
		\Vert y \Vert_{\mathcal{Y}_T}
		+ \Vert y \Vert_{\mathcal{Y}_T}^2
		+ \Vert w \Vert_{\mathcal{C}_T}^2
		+ \Vert w \Vert_{\mathcal{C}_T}^3
		\right)\\
		&\leq \gamma \left(
		\Vert y \Vert_{\mathcal{Y}_T}
		+ \Vert y \Vert_{\mathcal{Y}_T}^2
		+ 4 \gamma^2 \Vert y \Vert_{\mathcal{Y}_T}^2
		+ 8 \gamma^3 \Vert y \Vert_{\mathcal{Y}_T}^3
		\right),
	\end{aligned}
	\end{equation*}
	where the second estimate is justified by $w \in \mathcal{M}$. Finally the assumption on $y$ and a possible decrease of $\delta_7$ such that $ \delta_7 (1 + 4 \gamma^2 + 8 \gamma^3 \delta_7) \leq 1$ yield $\mathcal{Z}(w) \in \mathcal{M}$.
	
	To show the contraction property let $w,z \in \mathcal{M}$ and for $t \in [0,T]$ consider
	\begin{equation*}
		\begin{aligned}
			\Vert \mathcal{Z}(w)(t) - \mathcal{Z}(z)(t) \Vert_\mathcal{E}
			&\leq
			c_U \int_0^t
			\Vert \mathcal{G}_1[w](s) - \mathcal{G}_1[z](s) \Vert_{\mathcal{E}}\\
			&+
			\Vert \mathcal{G}_2[w](s) - \mathcal{G}_2[z](s) \Vert_{\mathcal{E}}
			+
			\Vert \mathcal{G}_3[w](s) - \mathcal{G}_3[z](s) \Vert_{\mathcal{E}}
			\, \mathrm{d}s.
		\end{aligned}
	\end{equation*}
	Using the identities \eqref{eq: cubId} we find
	\begin{equation*}
	\begin{aligned}
		&\Vert \mathcal{G}_1[w](s) - \mathcal{G}_1[z](s) \Vert_{\mathcal{E}}\\
		&\leq 3 c_\mathrm{em}^3
		\left(  
		\tfrac{1}{3}\Vert w - z \Vert_{\mathcal{C}_T}^2
		+  \Vert z \Vert_{\mathcal{C}_T} \Vert w- z \Vert_{\mathcal{C}_T}
		+  \Vert z \Vert_{\mathcal{C}_T}^2
		+  c_{\Tilde{w}} \Vert w + z \Vert_{\mathcal{C}_T} 
		\right) \Vert w - z \Vert_{\mathcal{C}_T}.
	\end{aligned}
	\end{equation*}
	Using \Cref{lem: InvLip} we obtain
	\begin{equation*}
		\Vert \mathcal{G}_2[w](s) - \mathcal{G}_2[z](s) \Vert_{\mathcal{E}}
		\leq
		\alpha c_C L
		\Vert y(s) \Vert_Y
		\Vert w(s) - z(s) \Vert_\mathcal{E} 
	\end{equation*}
	and by inserting a zero
	\begin{equation*}
		\begin{aligned}
			\Vert \mathcal{G}_3[w](s) - \mathcal{G}_3[z](s) \Vert_{\mathcal{E}}
			&\leq
			\alpha \left\Vert \left(
			D_{\xi\xi}^2 \mathcal{V}(s,{w}(s),y)^{-1}
			-
			D_{\xi\xi}^2 \mathcal{V}(s,0,0)^{-1}
			\right) \, C^* C w(s) \right. \\
			&- \left. \left( 
			D_{\xi\xi}^2 \mathcal{V}(s,w(s),y)^{-1} 
			-
			D_{\xi\xi}^2 \mathcal{V}(s,0,0)^{-1}
			\right) \, C^* C z(s)
			\right\Vert_\mathcal{E}\\
			&+
			\alpha \left\Vert \left(
			D_{\xi\xi}^2 \mathcal{V}(s,{w}(s),y)^{-1}
			-
			D_{\xi\xi}^2 \mathcal{V}(s,0,0)^{-1}
			\right)  \, C^* C z(s) \right.\\
			&- \left. \left(
			D_{\xi\xi}^2 \mathcal{V}(s,z(s),y)^{-1} 
			-
			D_{\xi\xi}^2 \mathcal{V}(s,0,0)^{-1}
			\right) \, C^* C z(s)
			\right\Vert_\mathcal{E}\\
			&\leq 
			\alpha c_C^2 L
			\left( \Vert w(s) \Vert_\mathcal{E} + \Vert y(s) \Vert_Y + \Vert z(s) \Vert_\mathcal{E}  \right)
			\Vert w(s) - z(s) \Vert_\mathcal{E}.\\
		\end{aligned}
	\end{equation*}
	These three estimates together with $\Vert y \Vert_{\mathcal{Y}_T}< \delta_7$ and $w,z \in \mathcal{M}$ and a possible decrease of $\delta_7$ ensure that $\mathcal{Z}$ is a contraction.
\end{proof}
%

%
\section{Conclusion}\label{sec: Concl}
%
In this work we took a first step towards establishing well-posedness of the Mortensen observer applied to infinite-dimensional systems. For the nonlinear wave equation at hand it was shown that the state estimation via the energy minimizer is well-defined and further that the associated observer equation admits a locally unique solution. It still remains to show that the energy minimizer coincides with this trajectory, a task that could possibly be tackled analyzing the associated Hamilton-Jacobi-Bellman equation and the regularity of its solution. Another strain of future research is the extension to more general systems. Since the arguments of the present work rely on the reversibility of the dynamics, hyperbolic systems are the natural first candidate. However, one might be able to also treat irreversible dynamics and parabolic behavior by limiting the search for a minimizer of the energy to a subset of states reachable by the dynamics, hence allowing for a backward solution. 

%
%
%
\appendix
%
\section{Linear quadratic optimal control problem}\label{sec: App}~\\
%
In the appendix we discuss a general linear quadratic optimal control problem. It reads
\begin{equation}\label{eq: GLQR}\tag{GLQR}
	\min\limits_{(w,v) \in \mathcal{C}_t \times \mathcal{L}_t} \Tilde{J}(w,v)
	~~\text{subject to}~~ w = D_v \mathcal{S}^t(\bar{v},\xi) [v] + f,
\end{equation}
where
\begin{equation}\label{eq: GLQRcost}
	\begin{aligned}
		\Tilde{J}(w,v) 
		&\coloneqq \tfrac{1}{2} \Vert w(0) \Vert_{\mathcal{E}}^2
		+ \tfrac{1}{2} \Vert v \Vert_{\mathcal{L}_t}^2
		+ \tfrac{\alpha}{2} \Vert \gamma - C w \Vert_{\mathcal{Y}_t}^2
		+ \tfrac{1}{2} \left( \bar{w}(0) - w_0, D_{vv}^2 \mathcal{S}^t(\bar{v},\xi) [v,v] (0) \right)_\mathcal{E}\\
		&- \tfrac{\alpha}{2} \left\langle \Psi[\bar{w} - \Tilde{w},y],
		D_{vv}^2 \mathcal{S}^t(\bar{v},\xi) [v,v] \right\rangle_{\mathcal{C}_t^*}
		+ \left( L_1,v \right)_{\mathcal{L}_t}
		+ \left( L_2,v \right)_{\mathcal{L}_t},\\
	\end{aligned}
\end{equation}
and $t \in (0,T]$, $\xi \in \mathcal{E}$, $\bar{v} \in \mathcal{L}_t$, $y \in \mathcal{Y}_t$, $f \in \mathcal{C}_t$, $\gamma \in \mathcal{Y}_t$, $L_1 \in \mathcal{L}_t$, $L_2 \in \mathcal{L}_t$ are given. We assume $\max \left( \Vert \xi \Vert_\mathcal{E}, \Vert \bar{v} \Vert_{\mathcal{L}_t}, \Vert y \Vert_{\mathcal{Y}_t} \right) < \delta^\prime$, with a constant $\delta^\prime \in (0,\delta_2]$ which properties will be specified later. Further $C \in \mathcal{L}(\mathcal{E};Y)$, $\mathcal{S}^t \colon \mathcal{L}_t \times \mathcal{E} \rightarrow \mathcal{C}_t$, and $\Psi[w,y] \in \mathcal{C}_t^*$ are as above. We denote $\bar{w} = \mathcal{S}^t(\bar{v},\xi)$ and by $\Tilde{w}$ the nominal trajectory.
We show that under suitable assumptions on $\delta^\prime$ \eqref{eq: GLQR} admits exactly one solution and establish the optimality system. 
\begin{proposition}\label{prop: GLQR}
	There exists $\delta^\prime \in (0,\delta_2]$ independent of $t$ such that for any $(\xi,\bar{v},y)$ satisfying $\max \left( \Vert \xi \Vert_\mathcal{E}, \Vert \bar{v} \Vert_{\mathcal{L}_t}, \Vert y \Vert_{\mathcal{Y}_t} \right) < \delta^\prime$ and every $f \in \mathcal{C}_t$, $\gamma \in \mathcal{Y}_t$, $L_1 \in \mathcal{L}_t$, $L_2 \in \mathcal{L}_t$ the system
	\begin{equation}\label{eq: OptSysGLQR}
		\begin{aligned}
			w &= D_v \mathcal{S}^t(\bar{v},\xi) [v] + f\\
			p &= D_v \mathcal{S}^t(\bar{v},\xi)^* (\delta_0^*(w(0)) - \alpha \Psi[w,\gamma])\\
			&+ \left( D_{vv}^2 \mathcal{S}^t(\bar{v},\xi) [v] \right)^*
			\left( \delta_0^*(\bar{w}(0) - w_0) - \alpha \Psi[\bar{w} - \Tilde{w},y] \right)
			+ L_1\\
			v &= -p - L_2
		\end{aligned}
	\end{equation}
	admits a unique solution $(w^*,v^*,p^*) \in \mathcal{C}_t \times \mathcal{L}_t \times \mathcal{L}_t$ and $(w^*,v^*)$ is the unique minimizer of \eqref{eq: GLQR}. Further there exists $\bar{c}>0$ independent of $t$ such that 
	\begin{equation*}
		\max \left( \Vert w^* \Vert_{\mathcal{C}_t}, \Vert v^* \Vert_{\mathcal{L}_t}, \Vert p^* \Vert_{\mathcal{L}_t} \right)
		\leq 
		\bar{c} \left( \Vert f \Vert_{\mathcal{C}_t} + \Vert \gamma \Vert_{\mathcal{Y}_t} + \Vert L_1 \Vert_{\mathcal{L}_t} + \Vert L_2 \Vert_{\mathcal{L}_t} \right).
	\end{equation*}
\end{proposition}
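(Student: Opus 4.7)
\textbf{Proof plan for \Cref{prop: GLQR}.} The strategy is to reduce \eqref{eq: GLQR} to an unconstrained quadratic minimization problem on $\mathcal{L}_t$ via the explicit state equation, show that the reduced cost is strongly convex and coercive for small enough $\delta^\prime$, and then extract \eqref{eq: OptSysGLQR} as its first order condition. Concretely, I set $w(v) = D_v \mathcal{S}^t(\bar{v},\xi)[v] + f$ and consider the reduced functional $\tilde{J}_r(v) \coloneqq \tilde{J}(w(v),v)$, which is a quadratic form in $v$ plus an affine part determined by $f$, $\gamma$, $L_1$, $L_2$.

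The key step is to verify that the quadratic part of $\tilde{J}_r$ is coercive on $\mathcal{L}_t$ with a constant that is \emph{uniform in} $t$. The three structurally positive contributions $\tfrac{1}{2}\|w(v)(0)\|_\mathcal{E}^2$, $\tfrac{1}{2}\|v\|_{\mathcal{L}_t}^2$ and $\tfrac{\alpha}{2}\|C w(v)\|_{\mathcal{Y}_t}^2$ yield at least $\tfrac{1}{2}\|v\|_{\mathcal{L}_t}^2$. The two indefinite quadratic terms feature the second derivative $D_{vv}^2 \mathcal{S}^t(\bar{v},\xi)[v,v]$ tested against $\bar{w}(0)-w_0$ and $\Psi[\bar{w}-\Tilde{w},y]$. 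Using \Cref{lem: EstS} one gets $\|D_{vv}^2 \mathcal{S}^t(\bar{v},\xi)[v,v]\|_{\mathcal{C}_t} \leq c_\mathcal{S} \|v\|_{\mathcal{L}_t}^2$, while \Cref{prop: SolStateEq} and the estimate \eqref{eq: estPsi1} give $\|\bar{w}-\Tilde{w}\|_{\mathcal{C}_t} + \|\Psi[\bar{w}-\Tilde{w},y]\|_{\mathcal{C}_t^*} \leq c \delta^\prime$ whenever $\max(\|\xi\|_\mathcal{E},\|\bar{v}\|_{\mathcal{L}_t},\|y\|_{\mathcal{Y}_t})<\delta^\prime$. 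Hence the indefinite terms are bounded in absolute value by $c \delta^\prime \|v\|_{\mathcal{L}_t}^2$, and choosing $\delta^\prime$ so small that this constant is $\leq \tfrac{1}{4}$ leaves a coercive quadratic form with constant at least $\tfrac{1}{4}$.

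With coercivity in hand, existence of a unique minimizer $v^* \in \mathcal{L}_t$ follows either by the direct method (bounded minimizing sequence, weak limit, weak lower semicontinuity of the convex continuous quadratic) or, more efficiently, by recognizing $\tilde{J}_r$ as a strongly convex continuous quadratic form and invoking Lax--Milgram on $D^2 \tilde{J}_r(0)$. Setting $w^* = w(v^*)$ and defining $p^*$ through the formula in the second line of \eqref{eq: OptSysGLQR}, the first order condition $D\tilde{J}_r(v^*)=0$ translates, via the chain rule together with the definitions of $\delta_0^*$ and $\Psi[\,\cdot\,,\,\cdot\,]$, exactly into $v^* + p^* + L_2 = 0$, giving the third line of \eqref{eq: OptSysGLQR}. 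Conversely, any solution of \eqref{eq: OptSysGLQR} satisfies $D\tilde{J}_r(v)=0$, so strict convexity forces uniqueness of the triple $(w^*,v^*,p^*)$.

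For the quantitative bound, evaluating the coercivity inequality at $v^*$ and writing $\tilde{J}_r(v^*) \leq \tilde{J}_r(0)$ controls $\|v^*\|_{\mathcal{L}_t}$ by $\|f\|_{\mathcal{C}_t} + \|\gamma\|_{\mathcal{Y}_t} + \|L_1\|_{\mathcal{L}_t} + \|L_2\|_{\mathcal{L}_t}$; the corresponding bounds for $\|w^*\|_{\mathcal{C}_t}$ and $\|p^*\|_{\mathcal{L}_t}$ follow by reinserting $v^*$ into the state equation and the adjoint formula, using once more \Cref{lem: EstS} and \eqref{eq: estPsi1}. The main obstacle is the uniformity in $t$ of the coercivity constant; this is where it is essential that all the bounding constants produced by \Cref{lem: EstS}, \Cref{prop: SolStateEq} and the estimate for $\Psi$ are independent of $t \in (0,T]$, and that the smallness threshold $\delta^\prime$ depends only on these universal constants and on $T$, not on $t$.
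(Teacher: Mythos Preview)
Your proposal is correct and follows essentially the same route as the paper: reduce to the unconstrained quadratic $\tilde{J}_r(v)=\tilde{J}(w(v),v)$, use \Cref{lem: EstS}, \Cref{prop: SolStateEq} and the estimate \eqref{eq: estPsi1} to absorb the indefinite second-derivative terms into $\tfrac{1}{2}\|v\|_{\mathcal{L}_t}^2$ for $\delta'$ small, identify \eqref{eq: OptSysGLQR} with the first order condition, and read off the bounds from $\tilde{J}_r(v^*)\le\tilde{J}_r(0)$. The only refinement worth noting is that in the last step the linear terms $(L_1+L_2,v^*)_{\mathcal{L}_t}$ must be split off and handled by Young's inequality before the comparison with $\tilde{J}_r(0)$ yields a clean bound on $\|v^*\|_{\mathcal{L}_t}$, exactly as the paper does.
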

\begin{proof}
	Instead of \eqref{eq: GLQR} we analyze the reduced problem given by
	\begin{equation*}
		\min\limits_{v \in \mathcal{L}_t} \Tilde{J}^r(v) \coloneqq \Tilde{J}(w_v,v),
	\end{equation*}
	where for $v \in \mathcal{L}_t$ we denote $w_v = D_v \mathcal{S}^t(\bar{v},\xi) [v] + f$. In the following we show that $\Tilde{J}^r$ is strictly convex and weakly coercive. Then \cite[Sec.~2.9, Prop.~1]{Zei95AMS109} yields that \eqref{eq: GLQR} admits exactly one solution and $v \in \mathcal{L}_t$ is the solution if and only if 
	\begin{equation*}
		D \Tilde{J}^r(v) = 0.
	\end{equation*}
	To see that $\Tilde{J}^r$ is indeed strictly convex and weakly coercive we first note that $w_v$ depends on $v$ in an affine-linear fashion. Considering the definition of $\Tilde{J}$ we conclude that the linear and non-negative quadratic terms are convex. Due to \Cref{prop: SolStateEq}, \Cref{lem: EstS}, and \eqref{eq: estPsi1} we can choose time independent constants $\delta^\prime \in (0,\delta_2] $ and $\epsilon > 0$  such that for all $v \in \mathcal{L}_t$ it holds
	\begin{equation*}
		\begin{aligned}
			&\Vert v \Vert_{\mathcal{L}_t}^2
			+ \left( \bar{w}(0) - w_0, D_{vv}^2 \mathcal{S}^t(\bar{v},\xi) [v,v] (0) \right)_\mathcal{E}
			- \alpha \left\langle \Psi[\bar{w} - \Tilde{w},y],
			D_{vv}^2 \mathcal{S}^t(\bar{v},\xi) [v,v] \right\rangle_{\mathcal{C}_t^*}\\
			&\geq \epsilon \Vert v \Vert_{\mathcal{L}_t}^2.
		\end{aligned}
	\end{equation*}
	Hence the asserted weak coercivity is shown. The strict convexity is ensured analogously. One easily calculates that $D \Tilde{J}^r (v) =0$ if and only if $w_v$ and $v$ together with the appropriate $p$ solve \eqref{eq: OptSysGLQR}.
	
	It remains to show the estimates. With Young's inequality we obtain
	\begin{equation*}
		\begin{aligned}
			\frac{\epsilon}{2} \Vert v^* \Vert^2 
			&\leq  \Tilde{J}^r(v^*)
			- (L_1 + L_2,v^*)_{\mathcal{L}_t} 
			\leq \Tilde{J}^r(0)
			+ \Vert L_1 \Vert_{\mathcal{L}_t} \Vert v^* \Vert_{\mathcal{L}_t}
			+ \Vert L_2 \Vert_{\mathcal{L}_t} \Vert v^* \Vert_{\mathcal{L}_t}\\
			&\leq \frac{1}{2} \Vert f(0) \Vert_{\mathcal{E}}^2
			+ \frac{\alpha}{2} \Vert \gamma - Cf \Vert_{\mathcal{Y}_t}^2
			+ \frac{2}{\epsilon} \Vert L_1 \Vert_{\mathcal{L}_t}^2
			+ \frac{\epsilon}{4} \Vert v^* \Vert_{\mathcal{L}_t}^2
			+ \frac{2}{\epsilon} \Vert L_2 \Vert_{\mathcal{L}_t}^2,
		\end{aligned}
	\end{equation*}
	which implies the estimate for $v^*$. The estimates for $w^*$ and $p^*$ follow by standard calculations utilizing \Cref{prop: SolStateEq}, \Cref{lem: EstS}, and \eqref{eq: estPsi1}.
\end{proof}
%
\section*{Acknowledgments}
%
We thank T. Breiten (TU Berlin) and K. Kunisch (KFU Graz, RICAM Linz) for valuable feedback on earlier versions of the manuscript. Funded by the Deutsche Forschungsgemeinschaft (DFG, German Research Foundation) - 504768428.

\bibliographystyle{siam}
\bibliography{references} 

\end{document}